\newtheorem{theorem}{Theorem}[section]
\newtheorem{proposition}[theorem]{Proposition}
\newtheorem{lemma}[theorem]{Lemma}
\newtheorem{corollary}[theorem]{Corollary}
\newtheorem*{question1}{Question 1}
\newtheorem*{question2}{Question 2}
\newtheorem*{open question}{Open Question}
\newtheorem*{thma}{Theorem A}
\newtheorem*{thmb}{Theorem B}
\newtheorem*{thmc}{Theorem C}
\newtheorem*{thmd}{Theorem D}
\newtheorem*{conjecture}{Eckmann-Ruelle Conjecture}
\newtheorem{remark}[theorem]{Remark}
\theoremstyle{definition}
\newtheorem{definition}[theorem]{Definition}
\newcommand{\Rmnum}[1]{\expandafter\@slowromancap\romannumeral #1@}
\begin{document}
\title{Dimensions and entropies for an expansive homeomorphism}
\author{Ercai Chen$^{1}$}
\author{Tassilo K\"{u}pper$^{2}$}
\author{Yunxiang Xie*$^{1}$}
\address{1.School of Mathematical Sciences and Institute of Mathematics, Ministry of Education Key Laboratory of
	NSLSCS, Nanjing Normal University, Nanjing 210023, Jiangsu, P.R.China}
\address{2.Mathematical Institute, University of Cologne,  D-50931 Cologne, Germany}
\email{ecchen@njnu.edu.cn}
\email{kuepper@math.uni-koeln.de}
\email{yxxie20@126.com}
\renewcommand{\thefootnote}{}
\footnotetext{*Corresponding author}
\subjclass[2020]{Primary:  37A15, 37A35, 37C45.}
\keywords{Dimensions; Hyperbolic metrics; Topological entropy;  Brin-Katok entropy; $r$-neutralized entropy; $\alpha$-estimation entropy;  Variational principle;
}
\renewcommand{\thefootnote}{\arabic{footnote}}
\begin{abstract}
For an expansive homeomorphism,  we investigate the relationship among dimension, entropy, and Lyapunov exponents. Motivated by Young's formula for surface diffeomorphisms, which links dimension and measure-theoretic entropy with hyperbolic ergodic measures, we construct the hyperbolic metric with two distinct  Lyapunov exponents $\log b>0>-\log a$. We then examine the relationships between various types of entropies (entropy, $r$-neutralized entropy as well as $\alpha$-estimation entropy) and dimensions. We further prove the Eckmann-Ruelle Conjecture for expansive homeomorphism  with hyperbolic metrics. Additionally, we establish variational principles for these new entropy quantities.
\end{abstract}

%\date{\today}

\maketitle
\tableofcontents
%\tableofcontents
\section{Introduction}
In classical ergodic theory, entropy and dimension theory are fundamental in unveiling the complexity of dynamical systems. Entropy, a conjugate invariant, quantifies the degree of disorder and information uncertainty within a system. By calculating the entropy of a system, it is possible to quantify the degree of disorder and information uncertainty of the system. Dimension theory describes the geometric structure of the system's phase space, offering insights into its complexity and degrees of freedom.

In the study of smooth dynamical systems, an important object of research is the hyperbolic system, which brings a deep understanding of the relation between entropy and dimensions. Researchers in \cite{BPS99,LY85a,LY85b,Pes77,Pes97,You82} have investigated the intricate relationships among dimension, entropy, and Lyapunov exponents for diffeomorphisms on manifolds, thereby substantially enhancing our understanding of these fundamental concepts and laying the groundwork for further research. 

Let $M$ be a compact smooth Riemannian manifold with the Riemannian metric $d$, $f:M\to M$ be a $C^{1+\alpha}$ diffeomorphism and $\mu$ be a $f$-invariant  ergodic measure. Denote by $d_{\mu}^u(x),d_{\mu}^s(x)$ the  unstable pointwise dimension and stable pointwise dimension of $\mu$, respectively (See   \cite{LY85b} for definitions). Let $\lambda_1>\lambda_2>\cdots>\lambda_{r(x)}$ be distinct Lyapunov exponents of $x$. Recall that an invariant measure $\mu$ is hyperbolic if all Lyapunov exponents are nonzero at $\mu$-almost every point.

A significant contribution to the study of relations among dimension, entropy, and Lyapunov exponents was made by Young \cite{You82} in 1982, who focused on the two-dimensional case.: 
For any hyperbolic ergodic measure $\mu$ with Lyapunov exponents $\lambda_1 > 0 > \lambda_2$,  the relation between  Hausdorff dimension $\dim_H(\mu)$ and the measure-theoretic entropy $h_\mu(f)$ can given by the formula: for $\mu$-a.e. $x\in M$, 
\begin{align}\label{equ 1.1}
	\dim_H(\mu)=\overline{d}_\mu(x)=\underline{d}_\mu(x)=h_\mu(f)\left(\frac{1}{\lambda_1}-\frac{1}{\lambda_2}\right),
\end{align}
where $\overline{d}_\mu(x)$ and $\underline{d}_\mu(x)$ denote the upper and lower pointwise dimension of $\mu$ at $x$. By Ledrappier and  Young's work \cite{LY85b}, we know that $h_\mu(f)=\lambda_1 d_\mu^u(x)$ for $\mu$-a.e. $x\in M$ under the assumption that $\mu$ only admits a positive Lyapunov exponents $\lambda_1.$
In 1985, Eckmann and Ruelle \cite{ER85} discussed the existence of pointwise dimension for hyperbolic invariant measures and  proposed a  famous conjecture:
\begin{conjecture}
	For any hyperbolic measure $\mu$ of $C^{1+\alpha}$ diffeomorphism $f$, the pointwise dimension exists almost everywhere and is constant.
\end{conjecture}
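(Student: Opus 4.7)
The plan is to follow the route opened by Barreira, Pesin, and Schmeling \cite{BPS99}. The Ledrappier--Young formula already guarantees that the unstable and stable pointwise dimensions $d_\mu^u(x)$ and $d_\mu^s(x)$ exist $\mu$-almost everywhere; by passing to an ergodic decomposition I may assume $\mu$ ergodic, so that these two quantities are constant almost everywhere. The whole problem then reduces to establishing the additivity
\begin{equation*}
d_\mu(x) \;=\; d_\mu^u(x) + d_\mu^s(x) \qquad\text{for $\mu$-a.e. } x,
\end{equation*}
from which the existence and constancy of the pointwise dimension follow at once.

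First I would fix a Pesin regular set $\Lambda$ of positive $\mu$-measure on which the local stable and unstable manifolds $W^s_{\mathrm{loc}}(x)$ and $W^u_{\mathrm{loc}}(x)$ have uniform size, vary continuously with $x$, and carry a genuine local product structure via a continuous bracket $[\cdot,\cdot]\colon \Lambda\times\Lambda\to M$. On $\Lambda$ one has uniform hyperbolicity constants, bounded distortion for all iterates, and (by Pesin's absolute continuity theorem) a uniformly bounded Jacobian for the stable holonomies between nearby unstable plaques.

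Second, using this product structure, I would prove a Fubini-type sandwich: for $\mu$-almost every $x\in\Lambda$ and every sufficiently small $r>0$,
\begin{equation*}
\mu^u_x\!\bigl(B^u(x,r)\bigr)\,\mu^s_x\!\bigl(B^s(x,r)\bigr) \;\le\; \mu\!\bigl(B(x,r)\bigr) \;\le\; \mu^u_x\!\bigl(B^u(x,Cr)\bigr)\,\mu^s_x\!\bigl(B^s(x,Cr)\bigr),
\end{equation*}
where $\mu^u_x$ and $\mu^s_x$ denote the conditional measures of $\mu$ on the respective plaques and $C=C(\Lambda)$. Taking logarithms, dividing by $\log r$, and letting $r\to 0^+$, together with the ergodic theorem so that $\mu$-a.e. $x$ returns to $\Lambda$ with positive frequency, delivers the additivity.

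The principal obstacle is the lower bound in the sandwich. It requires that, even at arbitrarily small scales, a nonnegligible share of $\mu(B(x,r))$ actually lies inside the product rectangle built from $B^u(x,r)$ and $B^s(x,r)$, which is delicate because (i)~the rectangles adapted to the Pesin charts are distorted with respect to the ambient metric, and (ii)~the conditionals $\mu^u_x,\mu^s_x$ need not be doubling on every scale. I would handle this by a Borel--Cantelli argument discarding a measure-zero set of bad $(x,r)$ pairs, using absolute continuity of the stable holonomy to compare conditionals on nearby unstable plaques with uniformly bounded Jacobian, and the Ledrappier--Young formula to control the asymptotic ratio $\log\mu^u_x(B^u(x,r))/\log r$. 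Once additivity is secured, constancy of $d_\mu$ is automatic, since $d_\mu^u$ and $d_\mu^s$ are $f$-invariant and hence constant on ergodic components.
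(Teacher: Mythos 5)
The paper does not prove the Eckmann--Ruelle conjecture; it is stated purely as historical background, and the resolution is attributed to Barreira, Pesin, and Schmeling \cite{BPS99}. The paper's own contribution in this circle of ideas is Theorem \ref{thm 1.2}, an analog for expansive homeomorphisms under the constructed hyperbolic metric $\tilde{d}$, and its proof has a completely different character: by Lemmas \ref{lem 3.1}--\ref{lem 3.3}, metric balls $B_{\rho}(x,r)$ coincide with cylinder sets $C_{-q}^{p}(x)$, hence with two-sided Bowen balls, so the pointwise dimension reduces to the Brin--Katok local entropy by what is essentially a change of scale --- no product structure, Pesin blocks, or holonomy appear. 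That device is unavailable for a $C^{1+\alpha}$ diffeomorphism with a fixed Riemannian metric, which is why the paper defers the original conjecture to \cite{BPS99} and works in its own constructed-metric setting instead.

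Your sketch of the Barreira--Pesin--Schmeling route is the correct strategy for the conjecture as stated, and the reduction to additivity $d_\mu=d_\mu^u+d_\mu^s$ is the right target. But the central step --- the lower bound in your sandwich --- is asserted rather than established, and as written it is not literally true: even on a uniform Pesin block, $\mu(B(x,r))$ need not dominate $\mu_x^u\bigl(B^u(x,r)\bigr)\,\mu_x^s\bigl(B^s(x,r)\bigr)$, because $\mu$ is not a product of its stable and unstable conditionals and the two families $\{\mu_x^u\}$, $\{\mu_x^s\}$ concentrate at different rates as $r\to 0$. The actual BPS argument replaces this naive inequality with a quantitative counting estimate over ``good'' scales and returns to the Pesin block, a Markov-type bound summed over $n$, and the Ledrappier--Young formula to control the exponents; Borel--Cantelli is invoked only after that estimate is proved. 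Your plan correctly names the tools but stops precisely where the real work lives, so as a proof it is incomplete, and in any case it is genuinely different from the route taken by the paper for its own Theorem \ref{thm 1.2}.
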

In 1999, for the high-dimensional case, Barreira, Pesin, and Schmeling \cite{BPS99} provided a definitive resolution to the Eckmann-Ruelle Conjecture. They established the precise relationship between the pointwise dimension and both the unstable and stable pointwise dimensions: for $\mu$-a.e. $x\in M$,
\begin{align}\label{equ 1.2}
	\overline{d}_\mu(x)=\underline{d}_\mu(x)=d_{\mu}^u(x)+d_{\mu}^s(x).	
\end{align}

Unlike smooth dynamical systems, topological dynamical systems lack the properties of smoothness. Noticing that the above results hold under the Riemannian metric, we are concerned with how to construct suitable metrics in topological dynamical systems with a similar hyperbolic structure. An interseting question is raised:
\begin{question1}\label{question1}
	Whether the Eckmann-Ruelle Conjecture hold for some metric of topological dynamical systems?
\end{question1}
By a  pair $(\mathcal X,f)$ we will mean  a topological dynamical system (TDS),  where $\mathcal X$ is a compact metrizable  space and $f\colon \mathcal X\rightarrow \mathcal X$ is a homeomorphism. Let $\mathcal D(\mathcal X)$  denote the all metrics compatible with topology on $\mathcal X$. Denote by  $\mathcal{M}(\mathcal X), \mathcal{M}(f), \mathcal{E}(f)$  the sets of Borel probability measures, $f$-invariant probability measures, $f$-ergodic probability measures, respectively.

For continuous maps on metric spaces concerned with the relation between entropy and dimensions, progress has been limited, with only a few specialized results reported \cite{BK83,DZG98,Fat89}. In contrast, expansive dynamical systems have garnered substantial attention, including studies on $\mathbb{Z}$-action \cite{Man79,RR20}, continuous flow \cite{Bow72}, random dynamical systems \cite{CF98,GK20}, symbolic dynamical systems \cite{CX97}, and  $\mathbb{Z}^k$-action \cite{GWZ23,MT19}. A homeomorphism  $f$ of a compact metric space $(\mathcal X,d)$ is said to be expansive if there exists $\epsilon>0$ such that $\sup_{n\in \mathbb{Z}}d(f^nx,f^ny)>  \epsilon$. The property of expansiveness is valuable due to its applications in symbolic dynamics, stability theory, and ergodic theory. Notably, hyperbolic systems provide numerous examples of expansive maps; for instance, the restriction of a diffeomorphism to a hyperbolic set is always expansive \cite{Bow75}. 
In 1987, Fried \cite{Fried 87} utilized Frink's metrization theorem \cite{Fri37} to construct a metric that contracts on stable sets and dilates on unstable sets for expansive maps. Regarding dimension, a significant early result is Ma\~{n}\'{e}'s Theorem \cite{Man79}, which established that a compact metric space admitting an expansive homeomorphism must be finite-dimensional.
\begin{thma}\cite{Man79}
	If $f\colon \mathcal X\to \mathcal X$ is an expansive homeomorphism, then $\mathcal X$ is finite-dimensional.
\end{thma}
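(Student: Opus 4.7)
The plan is to follow Mañé's original combinatorial strategy from \cite{Man79}, which exploits expansivity to produce arbitrarily fine open covers of $\mathcal{X}$ whose order (multiplicity) is uniformly bounded; by the Lebesgue definition of covering dimension this immediately yields $\dim\mathcal{X}<\infty$. The argument does not use any smooth structure, only the combinatorics of how orbit itineraries are forced by expansivity.

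First I would fix an expansive constant $\epsilon>0$ and choose a finite open cover $\mathcal{U}$ of $\mathcal{X}$ with $\operatorname{mesh}(\mathcal{U})<\epsilon$. Expansivity makes $\mathcal{U}$ a two-sided generator in the sense that each bi-infinite $\mathcal{U}$-itinerary specifies at most one point, so a standard compactness argument gives $\operatorname{mesh}(\mathcal{U}_n)\to 0$, where $\mathcal{U}_n:=\bigvee_{i=-n}^{n} f^{-i}\mathcal{U}$. Since $\dim\mathcal{X}\le d$ is equivalent to the existence, for every $\delta>0$, of a finite open cover of $\mathcal{X}$ with mesh $<\delta$ and order $\le d+1$, the problem reduces to extracting from each $\mathcal{U}_n$ a refinement whose order is bounded by a constant $D$ independent of $n$.

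The main obstacle is precisely this uniform multiplicity bound. The naive product estimate $\operatorname{ord}(\mathcal{V}\vee \mathcal{W})\le \operatorname{ord}(\mathcal{V})\cdot\operatorname{ord}(\mathcal{W})$ yields only $\operatorname{ord}(\mathcal{U}_n)\le \operatorname{ord}(\mathcal{U})^{2n+1}$, which grows exponentially in $n$ and is useless. Mañé's key insight is that expansivity forbids most of these potential overlaps from actually occurring: by a careful modification of $\mathcal{U}_n$ (shrinking elements slightly along the dynamics and passing to a subcover tailored to the orbit combinatorics), one produces a refinement whose order depends only on the combinatorics of $\mathcal{U}$ and on $\epsilon$, not on $n$. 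Verifying that the generator property is strong enough to enforce such a uniform bound is the technical heart of the argument; once it is in hand, the chain of arbitrarily fine covers of bounded order gives $\dim\mathcal{X}\le D-1<\infty$, as required.
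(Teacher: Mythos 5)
Your sketch takes a different route from the one this paper develops, and it also leaves the decisive step unproved. The paper cites Theorem A from \cite{Man79} and does not reprove it directly, but it explicitly points to Fathi's hyperbolic-metric method as ``providing a proof of Ma\~{n}\'{e}'s Theorem,'' and Theorems \ref{thm 1.1} and \ref{thm 1.2} sharpen exactly that route: construct a compatible metric $\tilde{d}$ adapted to the dynamics, compute $\dim_H(\mathcal X,\tilde d)=\bigl(\frac{1}{\log a}+\frac{1}{\log b}\bigr)h_{top}(f)$, observe that $h_{top}(f)<\infty$ for an expansive homeomorphism (a finite open cover of mesh below the expansive constant is a generator), and conclude $\dim\mathcal X<\infty$ from the classical fact that topological dimension is dominated by Hausdorff dimension with respect to any compatible metric. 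No combinatorics of cover orders is needed; finiteness of $\dim\mathcal X$ falls out of a much sharper quantitative bound.

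Your attempt instead targets the covering-dimension definition directly: take $\mathcal U_n=\bigvee_{|i|\le n}f^{-i}\mathcal U$, note $\operatorname{mesh}(\mathcal U_n)\to 0$, and extract refinements of order bounded uniformly in $n$. Steps 1--4 of the plan are sound, but the decisive steps (5)--(6) are asserted rather than proved: you claim that ``shrinking elements slightly along the dynamics and passing to a subcover tailored to the orbit combinatorics'' yields an order bound independent of $n$, and then acknowledge that verifying this is ``the technical heart of the argument.'' But that is precisely the content of the theorem: for a compact metric space, the existence of arbitrarily fine open covers of order $\le D$ \emph{is} the statement $\dim\mathcal X\le D-1$, so an a priori order bound cannot be extracted from $\mathcal U_n$ by generic shrinking and subcover manipulations. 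A valid argument along these lines would have to exhibit the dynamical mechanism --- for instance a quantitative consequence of expansivity in the spirit of Lemma \ref{lem 2.6} --- that forbids more than a bounded number of elements of $\mathcal U_n$ from meeting at a point, and your sketch does not identify it. As written, this is a plan, not a proof.
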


Hyperbolicity is an important property in the study of smooth dynamical systems, while in general, topological dynamical systems do not have this property. Since  hyperbolic systems are strongly associated with expansiveness, it seems to be possible to construct hyperbolic metric for an expansive homeomorphism.
In 1989, Fathi \cite{Fat89} showed that  any expansive homeomorphism admits a hyperbolic metric, which combining topology and hyperbolicity from a special perspective and is a partial answer to the Question \ref{question1}.
\begin{thmb}\cite[Theorem 5.20]{Fat89}
	If $f:\mathcal X\to \mathcal X$ is an expansive homeomorphism, then there exists $\alpha\in (1,2)$ and a metric $\tilde{d}:=\tilde{d}_\alpha\in \mathcal D(\mathcal X)$  and numbers $k>1,\epsilon'>0$ such that for any $x,y\in \mathcal X$,
	$$ \max\left\{\tilde{d}(fx,fy),\tilde{d}(f^{-1}x,f^{-1}y)\right\}\geq\min\{k\tilde{d}(x,y),\epsilon'\}.$$	
	Moreover, both $f$ and $f^{-1}$ are Lipschitz for $\tilde{d}$.
\end{thmb}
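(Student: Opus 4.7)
My plan is to build $\tilde d$ in two stages: first construct a symmetric ``expansive pre-metric'' $D_\alpha$ based on separation times that already encodes the hyperbolic dilation, and then pass to a genuine metric $\tilde d$ via Frink's chain construction \cite{Fri37}, verifying that the relevant estimates survive the transition.

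Fix a compatible metric $d\in\mathcal D(\mathcal X)$ and an expansive constant $\epsilon_0>0$. For $x\neq y$ define the separation time
\[
n(x,y)=\max\{N\geq 0:d(f^ix,f^iy)\leq\epsilon_0\text{ for all }|i|\leq N\},
\]
with $n(x,x)=+\infty$; expansiveness makes $n$ finite off the diagonal. For $\alpha>1$ set $D_\alpha(x,y)=\alpha^{-n(x,y)}$ and $D_\alpha(x,x)=0$. I would verify three features. First, $D_\alpha$ is symmetric, continuous, and compatible with the topology of $\mathcal X$, which follows from compactness together with the standard equicontinuity consequence of expansiveness. Second, a straightforward index-shift gives $n(f^{\pm 1}x,f^{\pm 1}y)\geq n(x,y)-1$, hence
\[
D_\alpha(f^{\pm 1}x,f^{\pm 1}y)\leq \alpha\,D_\alpha(x,y),
\]
which is the Lipschitz bound at the $D_\alpha$ level. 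Third, if $n(x,y)=N$ then some $|i_0|=N+1$ satisfies $d(f^{i_0}x,f^{i_0}y)>\epsilon_0$, and applying $f^{\operatorname{sgn}(i_0)}$ strictly decreases the separation time, yielding the one-step dilation
\[
\max\{D_\alpha(fx,fy),\,D_\alpha(f^{-1}x,f^{-1}y)\}\geq \alpha\,D_\alpha(x,y).
\]

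The main obstacle is to establish a quasi-triangle inequality
\[
D_\alpha(x,z)\leq K\max\{D_\alpha(x,y),\,D_\alpha(y,z)\}
\]
with $K\leq 2$, the input Frink's lemma requires. If $\min\{n(x,y),n(y,z)\}=M$, the triangle inequality for $d$ only gives $d(f^ix,f^iz)\leq 2\epsilon_0$ for $|i|\leq M$, whereas the definition of $n(x,z)$ demands $\leq\epsilon_0$. I would bridge this by choosing $\epsilon_0$ small compared with the expansive constant and invoking uniform continuity of $f^{\pm 1}$ to bound the loss of separation time by a fixed $c=c(\epsilon_0)$, giving $K=\alpha^c$. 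The restriction $\alpha\in(1,2)$ enters precisely as the range in which these parameters can be coordinated so that $\alpha^c\leq 2$; balancing the expansiveness threshold, the continuity modulus, and the exponential rate is the technical heart of the argument.

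Once the quasi-triangle inequality is secured, define
\[
\tilde d(x,y)=\inf\left\{\sum_{i=0}^{k-1}D_\alpha(x_i,x_{i+1}):x_0=x,\ x_k=y,\ k\in\mathbb N\right\}.
\]
Frink's metrization lemma then yields a genuine metric $\tilde d\in\mathcal D(\mathcal X)$ two-sidedly comparable to $D_\alpha$. The Lipschitz bound for $\tilde d$ transfers chainwise: for any near-optimal chain joining $x$ and $y$, applying $f^{\pm 1}$ termwise produces a chain joining $f^{\pm 1}x$ and $f^{\pm 1}y$ whose $D_\alpha$-length grows by at most a factor of $\alpha$, so $\tilde d(f^{\pm 1}x,f^{\pm 1}y)\leq\alpha\,\tilde d(x,y)$. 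For the dilation inequality, restrict first to pairs with $\tilde d(x,y)$ below a threshold where the $D_\alpha$-expansion bound applies; combining it with the two-sided comparison $D_\alpha\asymp\tilde d$ produces $\max\{\tilde d(fx,fy),\tilde d(f^{-1}x,f^{-1}y)\}\geq k\,\tilde d(x,y)$ for some $k>1$. For pairs above the threshold, the expansiveness of $f$ together with compactness of $\mathcal X\times\mathcal X$ away from the diagonal ensures a uniform positive lower bound $\epsilon'$ on $\max\{\tilde d(fx,fy),\tilde d(f^{-1}x,f^{-1}y)\}$, so the combined statement takes the stated $\min\{k\tilde d(x,y),\epsilon'\}$ form.
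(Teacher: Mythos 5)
Your overall skeleton (separation-time pre-metric, quasi-triangle inequality with constant at most $2$, Frink metrization) is the right one and matches Fathi's proof (which the paper adapts in Section~2 for Theorem~1.1). But there are two genuine problems.

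The smaller one is the mechanism you invoke for the quasi-triangle inequality. Uniform continuity of $f^{\pm 1}$ does not bound ``loss of separation time.'' What actually closes the gap between $d(f^ix,f^iz)\le 2\epsilon_0$ and the threshold $\epsilon_0$ is a compactness-plus-expansiveness lemma: if $2\epsilon_0$ is at most the expansive constant, then there exists an integer $c$ such that every pair $(u,v)$ with $d(u,v)>\epsilon_0$ has its $2\epsilon_0$-separation time bounded by $c$. (One proves this by contradiction: a sequence of pairs with $d(u_k,v_k)>\epsilon_0$ and $2\epsilon_0$-separation time $\ge k$ would, by compactness, subconverge to a pair $u\ne v$ with $\sup_n d(f^nu,f^nv)\le 2\epsilon_0$, contradicting expansiveness.) This is exactly Lemma~2.6 of the paper (and Fathi's key lemma), and it is where compactness is really spent. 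Uniform continuity alone cannot produce a uniform bound of this kind.

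The serious gap is the last step. After Frink you get a metric $\tilde d$ with $\tilde d\le D_\alpha\le 4\tilde d$, and the $D_\alpha$-level dilation gives
\[
\max\bigl\{D_\alpha(fx,fy),\,D_\alpha(f^{-1}x,f^{-1}y)\bigr\}\ge \alpha\,D_\alpha(x,y).
\]
Transferring this through the two-sided comparison only yields
\[
\max\bigl\{\tilde d(fx,fy),\,\tilde d(f^{-1}x,f^{-1}y)\bigr\}\ge \tfrac{\alpha}{4}\,\tilde d(x,y),
\]
and since $\alpha<2$ you have $\alpha/4<1$: the comparison constant from Frink eats the expansion. Your claim that this ``produces $\ge k\,\tilde d(x,y)$ for some $k>1$'' does not follow. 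The missing idea is Mather's trick, which the paper carries out in Lemma~2.11 and the proof of Theorem~1.1: one first iterates the pre-metric dilation to get an $n$-step estimate of the form $\max\{D(f^nx,f^ny)/\alpha^n,\,D(f^{-n}x,f^{-n}y)/\alpha^n\}\ge \tfrac14 D(x,y)$ whenever the orbit segment stays close, and then replaces $D$ by
\[
\hat d(x,y)=\max_{0\le i<n_0}\max\Bigl\{\frac{D(f^{-i}x,f^{-i}y)}{k^i},\,\frac{D(f^{i}x,f^{i}y)}{k^i}\Bigr\}
\]
for a suitable $k=4^{-1/n_0}\alpha<\alpha$ and $n_0$ large. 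The factor $4^{-1/n_0}$ is engineered precisely to absorb the $\tfrac14$ from Frink's comparison, so that the one-step estimate for $\hat d$ achieves a genuine $k>1$. Without this averaging step your construction gives a metric for which $f$ and $f^{-1}$ are Lipschitz, but not one that verifies the stated dilation inequality.
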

The metric described in Theorem A is referred to as an \emph{adapted hyperbolic metric} for the expansive homeomorphism, with $\epsilon$ serving as the expansive constant associated with $f$. 
In some sense, $\log \alpha$ can  be regarded as a Lyapunov exponent.
Fathi \cite{Fat89} further proved that any homeomorphism admitting an adapted hyperbolic metric is not only expansive but also characterized by finite topological entropy and Hausdorff dimension, thereby providing a proof of Ma\~{n}\'{e}'s Theorem.
\begin{thmc}\cite[Theorem equ 5.23]{Fat89}
	Let $f:\mathcal X\to \mathcal X$ be an expansive homeomorphism. Suppose that there exists a metric $\tilde{d}\in \mathcal D(\mathcal X)$  and numbers $k>1,\epsilon'>0$ verifying:
	$$\max\{\tilde{d}(fx,fy),\tilde{d}(f^{-1}x,f^{-1}y)\}\geq\min\{k\tilde{d}(x,y),\epsilon'\}$$
	for any $x,y\in \mathcal X$. Then $f$ is expansive and we have: $$\dim_H(\mathcal X,\tilde{d})\leq\overline{{\rm dim}}_B(\mathcal X,\tilde{d})\leq2\frac{h_{top}(f)}{\log k},$$
	where $dim_H(\mathcal X,\tilde{d})$ and $\overline{\dim}_B(\mathcal X,\tilde{d})$ denote the Hausdorff dimension and upper Box dimension of $\mathcal X$.
\end{thmc}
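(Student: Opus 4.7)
The plan is to handle the two inequalities separately. The bound $\dim_H(\mathcal X,\tilde d) \leq \overline{\dim}_B(\mathcal X,\tilde d)$ is the standard universal inequality of dimension theory, so the real work is to control the upper box dimension by $2h_{top}(f)/\log k$. Expansiveness of $f$ will fall out as a byproduct of the main iteration argument, since expansiveness is a topological invariant independent of the choice of compatible metric.

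First, I would prove a quantitative separation lemma by iterating the hyperbolicity inequality. Given distinct $x,y \in \mathcal X$ with $\tilde d(x,y) = r_0 > 0$, I would pick at each step $\varepsilon_{i+1} \in \{\pm 1\}$ that realizes the maximum in the hyperbolicity inequality, and set $(x_{i+1},y_{i+1}) := (f^{\varepsilon_{i+1}} x_i, f^{\varepsilon_{i+1}} y_i)$, so that the distances $r_i := \tilde d(x_i,y_i)$ satisfy $r_{i+1} \geq \min\{kr_i,\epsilon'\}$. Hence $r_m \geq \epsilon'$ whenever $m \geq \log(\epsilon'/r_0)/\log k$, and the running index $j := \varepsilon_1+\cdots+\varepsilon_m \in [-m,m]$ satisfies $\tilde d(f^j x, f^j y) \geq \epsilon'$. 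Taking $r_0 > 0$ arbitrary yields expansiveness with constant $\epsilon'$; taking $r_0 \geq r$ for small $r$ yields the key quantitative statement: every $r$-separated subset $E \subset (\mathcal X,\tilde d)$ is two-sided $(m(r),\epsilon')$-separated for $f$, where $m(r) := \lceil \log(\epsilon'/r)/\log k \rceil$.

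Next I would convert this into a box-dimension bound. Let $N(r)$ denote the maximum cardinality of an $r$-separated subset of $(\mathcal X,\tilde d)$, and let $s_n(\epsilon')$ denote the maximum cardinality of an $(n,\epsilon')$-separated set for $f$. Replacing $E$ by $f^{-m(r)}(E)$, the symmetric window $[-m(r),m(r)]$ becomes the forward window $[0,2m(r)]$, which gives $N(r) \leq s_{2m(r)+1}(\epsilon')$. Dividing $\log N(r)$ by $\log(1/r)$ yields
$$
\frac{\log N(r)}{\log(1/r)} \leq \frac{2m(r)+1}{\log(1/r)} \cdot \frac{\log s_{2m(r)+1}(\epsilon')}{2m(r)+1},
$$
and passing to limsup as $r \to 0$ gives $\overline{\dim}_B(\mathcal X,\tilde d) \leq \frac{2}{\log k}\cdot h_{top}(f)$, using both $(2m(r)+1)/\log(1/r) \to 2/\log k$ and $\limsup_n \frac{1}{n}\log s_n(\epsilon') = h_{top}(f)$, the latter being valid since $\epsilon'$ lies below the expansive constant by the first step.

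I expect the main subtle point to be the factor of $2$ in the final bound. It appears because the hyperbolicity condition is bidirectional: at each step either $f$ or $f^{-1}$ may realize the required expansion, so the separating index $j$ lives in the symmetric window $[-m(r),m(r)]$, whose corresponding forward length in the definition of topological entropy is $2m(r)+1$. A minor technical point is to justify $\frac{1}{n}\log s_n(\epsilon') \to h_{top}(f)$ for the specific $\epsilon'$ coming from the hyperbolic metric rather than only in the limit $\epsilon' \to 0$, but this is a standard consequence of expansiveness once Step~1 shows that $\epsilon'$ is itself an expansive constant.
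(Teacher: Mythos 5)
Your proof is correct, and it is essentially the original argument: iterate the hyperbolicity inequality to show that $r$-separated points become $\epsilon'$-separated under $f^j$ for some $|j|\leq m(r)\approx\log(1/r)/\log k$, shift the window to $[0,2m(r)]$ to compare with forward $(n,\epsilon')$-separated sets, and divide. I checked the induction $r_m\geq\min\{k^m r_0,\epsilon'\}$, the bound $N(r)\leq s_{2m(r)+1}(\epsilon')$ via replacing $E$ by $f^{-m(r)}(E)$, and the limit $(2m(r)+1)/\log(1/r)\to 2/\log k$; all are sound.

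Two remarks. First, the paper states Theorem~C as a citation to Fathi without reproving it, so there is no in-paper proof to compare against; the closest in-paper analogue is Theorem~\ref{thm 3.6}, which proceeds differently (through the quasi-metric $\rho$, cylinder sets $C_{-q(r)}^{p(r)}(x)$, and Lemmas~\ref{lem 3.1}--\ref{lem 3.3}) and obtains an exact equality $\dim_H=\dim_B=(\frac{1}{\log a}+\frac{1}{\log b})h_{top}(f)$ for the specific metric constructed in Theorem~\ref{thm 1.1}, rather than the one-sided bound for an arbitrary metric satisfying the hyperbolicity inequality. Your argument is more elementary and applies to any such metric, but only yields the upper bound; the paper's cylinder-set machinery is what makes the two-sided equality possible. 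Second, the ``minor technical point'' you flag at the end is actually superfluous: since Theorem~C is a pure upper bound, you only need $\limsup_n\frac{1}{n}\log s_n(\epsilon')\leq h_{top}(f)$, which holds unconditionally by monotonicity of $s_n(\cdot)$ and the definition of topological entropy; the equality (and hence the precise relation between $\epsilon'$ and the expansive constant) would only be needed for a matching lower bound, which Theorem~C does not assert.
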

Subsequently, Dai, Zhou, and Geng \cite{DZG98} focused on the dimensions of ergodic measures associated with expansive homeomorphisms and demonstrated the following result:
\begin{thmd}\cite[Theorem 1]{DZG98}
	Let $f:\mathcal X\to \mathcal X$ be a expansive homeomorphism. Suppose that there exists a metric $\tilde{d}\in \mathcal D(\mathcal X)$  and numbers $k>1,\epsilon'>0$ verifying:
	$$\max\{\tilde{d}(fx,fy),\tilde{d}(f^{-1}x,f^{-1}y)\}\geq\min\{k\tilde{d}(x,y),\epsilon'\}$$
	for any $x,y\in \mathcal X$. Then  $$\dim_H(\mu,\tilde{d})\leq\overline{{\rm dim}}_B(\mu,\tilde{d})\leq2\frac{h_\mu(f)}{\log k}$$ holds for any $\mu\in \mathcal{E}(f)$,	where $\dim_H(\mu,\tilde{d})$ and $\overline{{\rm dim}}_B(\mu,\tilde{d})$ denote the Hausdorff dimension and upper Box dimension of $\mu$.
\end{thmd}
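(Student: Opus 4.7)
The strategy extends Fathi's counting argument (Theorem C) to the measure-theoretic setting by combining the hyperbolic contraction-expansion condition with the Brin--Katok local entropy formula applied to two-sided Bowen balls.

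\emph{Step 1 (comparing Bowen and $\tilde{d}$-balls).} Writing $B_n^*(x, \epsilon') := \{y \in \mathcal X : \tilde{d}(f^j x, f^j y) < \epsilon',\ |j| \leq n\}$ for the two-sided Bowen ball of order $n$, I first prove the geometric inclusion
\begin{equation*}
B_n^*(x, \epsilon') \subset B_{\tilde{d}}(x, \epsilon' k^{-n}).
\end{equation*}
Given $y \in B_n^*(x, \epsilon') \setminus \{x\}$, I inductively construct a walk $J_0 = 0, J_1, \dots, J_n \in \mathbb{Z}$ with $|J_{i+1} - J_i| = 1$ and $\tilde{d}(f^{J_i} x, f^{J_i} y) \geq k^i \tilde{d}(x, y)$. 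At step $i \leq n-1$ we have $|J_i| \leq i \leq n - 1$, so both $|J_i \pm 1| \leq n$ and the Bowen constraint forces $\tilde{d}(f^{J_i \pm 1} x, f^{J_i \pm 1} y) < \epsilon'$; the hyperbolic condition then gives $\min\{k\,\tilde{d}(f^{J_i}x, f^{J_i}y), \epsilon'\} = k\,\tilde{d}(f^{J_i}x, f^{J_i}y)$, and one of the $\pm 1$ directions achieves the $k$-fold amplification. After $n$ iterations $\tilde{d}(f^{J_n}x, f^{J_n}y) \geq k^n \tilde{d}(x,y)$, while $|J_n| \leq n$ keeps this quantity below $\epsilon'$, so $\tilde{d}(x, y) < \epsilon' k^{-n}$.

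\emph{Step 2 (upper pointwise dimension).} By Theorem C, $f$ is $\tilde{d}$-expansive with expansive constant $\epsilon'$, so the Brin--Katok formula for two-sided Bowen balls yields, for $\mu$-a.e.\ $x$,
\begin{equation*}
h_\mu(f) = \limsup_{n \to \infty} \frac{-\log\mu(B_n^*(x, \epsilon'))}{2n + 1}.
\end{equation*}
For $r > 0$ choose $n = n(r) \in \mathbb{N}$ with $\epsilon' k^{-(n+1)} < r \leq \epsilon' k^{-n}$; Step~1 gives $B_{\tilde{d}}(x, r) \supset B_{n+1}^*(x, \epsilon')$, so $-\log\mu(B_{\tilde{d}}(x, r)) \leq -\log\mu(B_{n+1}^*(x,\epsilon'))$ while $-\log r \geq n \log k - \log \epsilon'$. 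Passing to $\limsup$ as $r \to 0^+$ (equivalently $n \to \infty$) yields
\begin{equation*}
\overline{d}_\mu(x) := \limsup_{r \to 0^+} \frac{\log \mu(B_{\tilde{d}}(x, r))}{\log r} \leq \frac{2 h_\mu(f)}{\log k} \quad \text{for } \mu\text{-a.e.\ } x.
\end{equation*}

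\emph{Step 3 (from pointwise to global).} The inequality $\dim_H(\mu, \tilde{d}) \leq 2 h_\mu(f)/\log k$ is immediate from Step~2 via the classical fact that a $\mu$-almost sure upper bound on $\overline{d}_\mu$ transfers to $\dim_H(\mu)$. For the upper box dimension a direct use of Step~2 is insufficient because the $\limsup$ above is not uniform in $x$; I therefore apply Egorov's theorem to produce, for any $\eta, \delta > 0$, a set $Z \subset \mathcal X$ and an $N_0 \in \mathbb{N}$ with $\mu(Z) \geq 1 - \delta$ and $\mu(B_n^*(x, \epsilon')) \geq e^{-(2n+1)(h_\mu(f) + \eta)}$ for every $x \in Z$ and $n \geq N_0$. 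For small $r$, a maximal $(r/2)$-separated subset $\{x_i\} \subset Z$ has disjoint balls $B_{\tilde{d}}(x_i, r/2)$ which, by Step~1, each carry $\mu$-mass at least $e^{-(2n(r/2) + 3)(h_\mu(f) + \eta)}$. Summing gives $\#\{x_i\} \leq C r^{-2(h_\mu(f) + \eta)/\log k}$, hence $\overline{\dim}_B(Z, \tilde{d}) \leq 2(h_\mu(f) + \eta)/\log k$; letting $\eta \downarrow 0$ and then $\delta \downarrow 0$ yields $\overline{\dim}_B(\mu, \tilde{d}) \leq 2 h_\mu(f)/\log k$.

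\emph{Main obstacle.} The delicate points are the book-keeping in the walk construction of Step~1, where one must guarantee $|J_i| \leq n$ at every step so that the Bowen condition on all three neighbouring iterates forces $\min\{k\tilde{d}, \epsilon'\} = k\tilde{d}$, and the Egorov passage in Step~3 from pointwise to uniform Brin--Katok convergence, which is what enables the counting estimate required for $\overline{\dim}_B(\mu, \tilde{d})$ rather than only $\dim_H(\mu, \tilde{d})$.
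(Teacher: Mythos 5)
The paper itself does not prove Theorem D: it is stated as a background result and attributed to \cite{DZG98}. The paper instead develops a refined metric (with two expansion rates $a,b$) and, via the cylinder-set description of $\rho$-balls in \cref{lem 3.1,lem 3.2,lem 3.3}, upgrades the inequality of Theorem D to an equality (Theorems~\ref{thm 3.5} and~\ref{thm 3.6}). So there is no ``paper's own proof'' to compare against; your proposal is essentially a reconstruction of the Fathi--Dai--Zhou--Geng argument that the cited source uses.

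That said, your argument is correct in outline and in all essential steps. Step~1 (the inductive walk $J_0,\dots,J_n$) is exactly the combinatorial core of \cite[Thm.~5.3]{Fat89}; you correctly track the constraint $|J_i|\le i$ so that both neighbours $J_i\pm 1$ lie in $[-n,n]$ and the Bowen condition rules out the $\epsilon'$-branch of the minimum, forcing the geometric gain $k$. Step~2 only uses the inequality $\limsup_n\frac{-\log\mu(B_n^*(x,r))}{2n+1}\le h_\mu(f)$ (which holds for a.e.\ $x$ and every fixed $r$ by monotonicity in $r$ of the inner $\limsup$); the stronger equality you invoke via expansiveness is unnecessary here, which is fortunate since it requires $r$ strictly below the expansive constant and some care about balls vs.\ generating partitions. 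Step~3 (Egorov plus a packing estimate) is the right way to pass from the a.e.\ pointwise bound to $\overline{\dim}_B(\mu,\tilde d)$ rather than only $\dim_H(\mu,\tilde d)$.

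One genuine (though easily repaired) slip: a maximal $(r/2)$-separated subset $\{x_i\}\subset Z$ does \emph{not} have pairwise disjoint balls $B_{\tilde d}(x_i,r/2)$; separation $\tilde d(x_i,x_j)\ge r/2$ only guarantees disjointness of the radius-$r/4$ balls. Either take a maximal $r$-separated set (disjoint $r/2$-balls, and they still $r$-cover $Z$), or keep the $(r/2)$-separated set and sum the measures of the disjoint $B_{\tilde d}(x_i,r/4)\supset B^*_{n(r/4)+1}(x_i,\epsilon')$. Either way the extra factor only shifts $n$ by $O(1)$ and leaves the dimension bound $2(h_\mu(f)+\eta)/\log k$ unchanged, so the conclusion stands.

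For comparison with the route the paper actually takes for its stronger results: instead of a walk argument producing a one-sided inclusion $B^*_n(x,\epsilon')\subset B_{\tilde d}(x,\epsilon'k^{-n})$, the paper identifies $\rho$-balls with cylinder sets exactly ($B_\rho(x,r)=C^{p(r)}_{-q(r)}(x)$) and hence obtains two-sided equalities between metric balls and Bowen balls, which is what lets it prove $\dim_H=\dim_B=\bigl(\frac{1}{\log a}+\frac{1}{\log b}\bigr)h_\mu(f)$ rather than merely the upper bound $2h_\mu(f)/\log k$. The price is the more delicate metric construction ($m$ from \cref{lem 2.6}, the constraint $\max\{a^m,b^m\}\le 2$, Frink's metrization); your argument is more elementary and works for any metric satisfying the single displayed inequality, but only yields the one-sided estimate.
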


In 2019, Meyerovitch and Tsukamoto \cite{MT19} explored the conditions under which an expansive $\mathbb Z^k$-action on a compact metric space implies the existence of finite mean dimension, thereby extending the foundational work of Ma\~{n}\'{e} \cite{Man79}. In parallel, Pacifico and Vieitez \cite{PV20} demonstrated the existence of a metric $d \in \mathcal{D} (\mathcal{X})$ that enables the accurate computation of Lyapunov exponents for expansive homeomorphisms. They showed that these exponents are non-zero for all points in the metric space $(\mathcal X,d)$. However, they also provided a cautionary example, illustrating that this desirable property may not hold when using the original metric. This highlights the complexities and subtleties involved in metric selection. Simultaneously, Roth and Roth \cite{RR20} made significant advancements in developing metrics for linear maps on the torus and non-invertible expansive maps on compact spaces. By constructing these specific metrics, they derived a new lower bound for topological entropy, expressed in terms of the resulting Hausdorff dimensions and Lipschitz constants. This work not only reversed an inequality in \cite{DZG98} but also provided a concise and elegant proof of a well-known theorem on expansive mappings, thereby enriching the theoretical framework in this dynamic research field.

The objective of this paper is to build upon these previous findings and establish a more precise connection. Specifically, we aim to prove additional general results, extending the work in \cite{DZG98,Fat89}. Our first result shows that  any expansive homeomorphism admits a hyperbolic metric with two different  Lyapunov exponents $\log b>0>- \log a$ and  one can precisely determine the relationship between the hyperbolic metric and its image and preimage.
\begin{theorem}\label{thm 1.1}
	If $f:\mathcal X\to \mathcal X$ is an expansive homeomorphism of the compact metric space $\mathcal X$,
	then there is a number $1<\beta\leq 2$ such that for any $a,b\in(1,\beta)$ and for any small enough $\gamma>0$,
	there exists $\tilde{d}\in \mathcal D(\mathcal X)$
	and  $\epsilon'>0$ verifying:
	$$ \max\left\{\frac{\tilde{d}(f^{-1}x,f^{-1}y)}{a-\gamma},\frac{\tilde{d}(fx,fy)}{b-\gamma}\right\}\geq\min\{\tilde{d}(x,y),\epsilon'\}$$
	for any $x,y\in \mathcal X$.
\end{theorem}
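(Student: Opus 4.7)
The plan is to refine Fathi's construction of an adapted hyperbolic metric (Theorem B) by tracking forward and backward orbital closeness separately, thereby producing two distinct hyperbolic rates $\log a$ and $\log b$. Let $e_0>0$ be an expansive constant for $f$. For $x\neq y$ in $\mathcal X$, set
\[
N^+(x,y):=\sup\bigl\{n\geq 0 : d(f^ix,f^iy)\leq e_0\text{ for }0\leq i\leq n\bigr\},
\]
\[
N^-(x,y):=\sup\bigl\{m\geq 0 : d(f^{-i}x,f^{-i}y)\leq e_0\text{ for }0\leq i\leq m\bigr\},
\]
with $\sup\emptyset=-1$; both are finite by expansiveness. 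For $a,b>1$ define the symmetric pre-distance
\[
\delta(x,y):=\max\bigl(a^{-N^-(x,y)},\ b^{-N^+(x,y)}\bigr),\qquad \delta(x,x):=0.
\]
Expansiveness together with the uniform continuity of the iterates shows $\delta$ separates points and is topologically compatible with $d$.

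The dynamical content of $\delta$ comes from the shift identities $N^+(fx,fy)=N^+(x,y)-1$ and $N^-(fx,fy)=N^-(x,y)+1$, valid on a neighborhood of the diagonal. Splitting according to which term attains the maximum in $\delta(x,y)$ yields either $\delta(f^{-1}x,f^{-1}y)\geq a\,\delta(x,y)$ or $\delta(fx,fy)\geq b\,\delta(x,y)$, so
\[
\max\bigl(\delta(f^{-1}x,f^{-1}y)/a,\ \delta(fx,fy)/b\bigr)\geq \delta(x,y).
\]
A standard use of a modulus of expansiveness (an integer $\ell$ with the property that $d(f^ix,f^iy)\leq e_0$ for $|i|\leq \ell$ implies $d(x,y)\leq e_0/2$) gives $N^\pm(x,z)\geq\min(N^\pm(x,y),N^\pm(y,z))-\ell$, whence the relaxed triangle inequality $\delta(x,z)\leq K\max(\delta(x,y),\delta(y,z))$ with $K:=\max(a,b)^\ell$.

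Next I upgrade $\delta$ to a genuine compatible metric by applying Frink's metrization lemma \cite{Fri37} (as in Fathi's proof of Theorem B) to the power $\delta^p$ for suitable $p\in(0,1]$. Choosing $p$ close to $1$ so that $K^p\leq 2$, Frink produces a compatible metric $\tilde d\in \mathcal D(\mathcal X)$ with $c_1\delta^p\leq \tilde d\leq c_2\delta^p$, where the distortion $c_2/c_1$ depends continuously on $K$ and tends to $1$ as $K\to 1^+$. I choose $\alpha_0\in(1,2]$ depending only on $\ell$ so that for every $a,b\in(1,\alpha_0)$ both $K$ and the attainable $c_2/c_1$ are as close to $1$ as needed. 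Given a small $\gamma>0$, select $p$ near $1$ so that $a^p\geq (c_2/c_1)(a-\gamma)$ and $b^p\geq (c_2/c_1)(b-\gamma)$; then raising the dynamical inequality for $\delta$ to the $p$-th power and comparing with $\tilde d$ gives
\[
\max\bigl(\tilde d(f^{-1}x,f^{-1}y)/(a-\gamma),\ \tilde d(fx,fy)/(b-\gamma)\bigr)\geq \tilde d(x,y)
\]
in the regime where the shift identities apply. Outside this regime, the truncation by $\min(\tilde d(x,y),\epsilon')$ renders the inequality trivial provided $\epsilon'$ is chosen below the threshold at which the orbits escape distance $e_0$.

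The main obstacle is the last step: the multiplicative Frink distortion $c_2/c_1$ and the exponent drop $a\mapsto a^p$ must be simultaneously absorbed into the prescribed loss $\gamma$, uniformly in $a,b\in(1,\alpha_0)$. This forces $\alpha_0$ to be chosen small enough in terms of $\ell$ so that $K=\max(a,b)^\ell$ remains uniformly close to $1$, which in turn makes both $p$ and $c_2/c_1$ controllable. A secondary but routine issue is verifying that $\delta$ is topologically equivalent to $d$, which uses expansiveness in its quantitative form: $\delta(x_n,x)\to 0$ forces $N^\pm(x_n,x)\to\infty$, and hence $d(x_n,x)\to 0$.
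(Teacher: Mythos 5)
Your construction of the quasi-metric $\delta(x,y)=\max\bigl(a^{-N^-(x,y)},\,b^{-N^+(x,y)}\bigr)$ is essentially identical to the paper's $\rho$ (Equation (2.1)), and your shift identities, quasi-triangle inequality with constant $K=\max(a,b)^{\ell}$, and appeal to Frink's metrization lemma all match the paper through Lemma 2.9. The gap lies entirely in the last step, where you try to eliminate the Frink distortion.

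Frink's lemma (as used here, and as cited in the paper) produces a metric with a \emph{fixed} two-sided comparison $D\le\rho\le 4D$. That factor $4$ is not a continuous function of $K$ that degenerates to $1$ as $K\to 1^+$; it is a hard constant coming from the chain-sum construction, and there is no version of the lemma, within what is cited, that improves it when the quasi-triangle constant shrinks. Your plan requires choosing $p$ near $1$ so that $a^p\ge (c_2/c_1)(a-\gamma)$; with $c_2/c_1=4$ and $a<2$ this inequality can never hold, since the left side is below $2$ and the right side exceeds $4(a-\gamma)\approx 4a$. Nor does tuning $\alpha_0$ rescue this: the theorem must hold for all $a,b$ up to $\alpha_0$, so $K$ ranges up to $\alpha_0^{\ell}$, not down to $1$, and the distortion does not vanish.

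The missing idea is Mather's averaging trick. The paper defines
\[
\tilde d(x,y)=\max_{0\le i\le n_0-1}\max\Bigl\{\frac{D(f^{-i}x,f^{-i}y)}{k_1^{i}},\ \frac{D(f^{i}x,f^{i}y)}{k_2^{i}}\Bigr\},\qquad k_1=4^{-1/n_0}a,\ k_2=4^{-1/n_0}b,
\]
which spreads the one-shot distortion $4$ over $n_0$ steps, so the per-step loss $4^{1/n_0}$ can be made $\le\gamma$ by choosing $n_0$ large. Combined with the paper's Lemma 2.11 (which says that starting from a $D$-small orbit segment one gains a factor $a^{n_0}$ or $b^{n_0}$ at the cost of one factor $4$), this yields exactly the inequality with $a-\gamma$ and $b-\gamma$. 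Without this step, or an equivalent device, your argument cannot close; replacing the final paragraph with the Mather-averaged metric and the expansion estimate of Lemma 2.11 repairs the proof.

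A secondary point: your claimed inequality $N^{\pm}(x,z)\ge\min\bigl(N^{\pm}(x,y),N^{\pm}(y,z)\bigr)-\ell$ needs a modulus statement adapted to the doubling of the threshold (triangle inequality gives $d(f^ix,f^iz)\le 2e_0$, not $\le e_0$), which is what the paper handles in Lemma 2.6 by working at radius $\epsilon/2$. This is easily fixed but should be stated correctly.
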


Furthermore, the second result shows that the topological entropy and the measure-theoretic entropy of  ergodic measures are  precisely characterized by the Hausdorff dimension and Box dimension under the hyperbolic metric, which is similar to Young's formula (\ref{equ 1.1}) and Barreira, Pesin, and Schmeling's formula (\ref{equ 1.2}). Moreover, it suggests that for any expansive $TDS$, there exists metrics such that  the Eckmann-Ruelle Conjecture also holds.
\begin{theorem}\label{thm 1.2}
	Let $f$ be an  expansive homeomorphism of $ \mathcal X$ and $\tilde{d}$ be the metric  obtained in Theorem \ref{thm 1.1}. Then for any  $\mu\in \mathcal M(f)$, for $\mu$-a.e. $x\in \mathcal X$, we have
	$$d_{\mu}(x,\tilde{d})=h_\mu(f,x)\left(\frac{1}{\log a}+\frac{1}{\log b}\right).$$
	If $\mu\in \mathcal E(f)$, then
	\begin{align*}
		\dim_H(\mu,\tilde{d})=&\dim_B(\mu,\tilde{d})=h_\mu(f)\left(\frac{1}{\log a}+\frac{1}{\log b}\right),\\
		\dim_H(\mathcal X,\tilde{d})=&\dim_B(\mathcal X,\tilde{d})=h_{top}(f)\left(\frac{1}{\log a}+\frac{1}{\log b}\right).
	\end{align*}
\end{theorem}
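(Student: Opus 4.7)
The plan is to reduce $d_\mu(x,\tilde d)$ to the Brin--Katok local entropy by sandwiching the metric ball $B(x,r)$ between asymmetric two-sided Bowen balls $B_{m,n}^\pm(x,\epsilon):=\{y:\tilde d(f^k x,f^k y)<\epsilon\text{ for }-m\le k\le n\}$. The central inclusion I would establish is
\[B_{m,n}^\pm(x,\epsilon)\subseteq B\!\left(x,\epsilon\max\{(a-\gamma)^{-m},(b-\gamma)^{-n}\}\right).\]
To prove it, set $u_k=\tilde d(f^k x,f^k y)$ for $y\in B_{m,n}^\pm(x,\epsilon)$ with $y\neq x$ and iterate the hyperbolicity condition from Theorem \ref{thm 1.1}: at every $k$ with $u_k<\epsilon'$, either $u_{k+1}\ge(b-\gamma)u_k$ (``forward wins'') or $u_{k-1}\ge(a-\gamma)u_k$ (``backward wins''). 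Starting at $k=0$, if the forward case occurs, I would argue by induction that every subsequent step must also be forward: were the backward case to occur at a later step $k$, combining it with the forward case at step $k-1$ would yield $u_{k-1}\ge(a-\gamma)u_k\ge(a-\gamma)(b-\gamma)u_{k-1}$, contradicting $u_{k-1}>0$ since $(a-\gamma)(b-\gamma)>1$. Hence $u_n\ge(b-\gamma)^n u_0$, giving $u_0\le(b-\gamma)^{-n}\epsilon$; the backward case is symmetric. The reverse inclusion $B(x,r)\subseteq B_{m,n}^\pm(x,\epsilon)$ for suitable $r$ comes from the Lipschitz upper bounds on $f$ and $f^{\pm1}$ produced during the construction in Theorem \ref{thm 1.1}, whose constants are asymptotic to $b$ and $a$.

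Next I would invoke the asymmetric two-sided Brin--Katok formula
\[\lim_{m+n\to\infty}-\tfrac{1}{m+n+1}\log\mu\big(B_{m,n}^\pm(x,\epsilon)\big)=h_\mu(f,x)\quad(\mu\text{-a.e.\ }x),\]
valid for all sufficiently small $\epsilon>0$. This follows from the standard one-sided Brin--Katok theorem after writing $B_{m,n}^\pm(x,\epsilon)=f^n B_{m+n+1}(f^{-n}x,\epsilon)$ and using $f$-invariance of $\mu$ together with the a.e.\ invariance $h_\mu(f,\cdot)=h_\mu(f,f\cdot)$. For a given $r>0$, I would balance $a^{-m}\asymp b^{-n}\asymp r/\epsilon$ to obtain the lower bound on $d_\mu(x,\tilde d)$ and $(a-\gamma)^{-m}\asymp(b-\gamma)^{-n}\asymp r/\epsilon$ for the upper bound; in either case $m+n\sim\log(\epsilon/r)(1/\log a+1/\log b)$ up to the $\gamma$-discrepancy. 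Substituting into Brin--Katok and letting $r\downarrow 0$ yields
\[d_\mu(x,\tilde d)=\left(\tfrac{1}{\log a}+\tfrac{1}{\log b}\right)h_\mu(f,x),\]
the $\gamma$-gap being absorbed by the freedom to take $\gamma$ arbitrarily small in Theorem \ref{thm 1.1}.

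For ergodic $\mu$, $h_\mu(f,x)=h_\mu(f)$ $\mu$-a.e., so the pointwise dimension is $\mu$-a.e.\ constant; a Billingsley/Frostman-type argument then promotes this to $\dim_H(\mu,\tilde d)=\dim_B(\mu,\tilde d)=(1/\log a+1/\log b)h_\mu(f)$. For the ambient space, the lower bound $\dim_H(\mathcal X,\tilde d)\ge(1/\log a+1/\log b)h_{top}(f)$ follows from $\dim_H(\mathcal X,\tilde d)\ge\sup_{\mu\in\mathcal E(f)}\dim_H(\mu,\tilde d)$ together with Katok's variational formula $h_{top}(f)=\sup_{\mu\in\mathcal E(f)}h_\mu(f)$; the matching upper bound on $\overline{\dim}_B(\mathcal X,\tilde d)$ comes from covering $\mathcal X$ by the Bowen balls above and counting $(m,n)$-separated sets, in the spirit of Fathi's proof of Theorem C.

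The principal difficulty is keeping the expansion constants $a-\gamma,b-\gamma$ from the hyperbolicity condition synchronised with the Lipschitz constants of $f,f^{-1}$ (which the construction places close to $b,a$) tightly enough to yield the sharp formula $(1/\log a+1/\log b)$ rather than a $\gamma$-dependent interval. This calls either for a careful inspection of the metric built in Theorem \ref{thm 1.1} to verify matching asymptotics as $\gamma\downarrow 0$, or for a limiting argument on $\gamma$ at the level of the pointwise dimension itself. The remaining ingredients --- two-sided Brin--Katok, Billingsley's lemma, and the variational principle --- are essentially classical.
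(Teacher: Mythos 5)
Your overall strategy — sandwich $\tilde d$-balls between two-sided Bowen balls, feed the asymptotics into the two-sided Brin--Katok formula, then promote to Hausdorff/Box dimension and to the ambient space — is indeed the paper's strategy in outline. But the specific way you propose to produce the two inclusions contains the gap you yourself flag at the end, and as written that gap is fatal, not cosmetic.

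The problem is that neither of your two inclusions yields the exponents $a,b$. Your inner inclusion iterates the hyperbolicity inequality of Theorem~\ref{thm 1.1}, which has the weakened rates $a-\gamma,b-\gamma$; so your upper bound is $\overline d_\mu(x,\tilde d)\le\bigl(\tfrac{1}{\log(a-\gamma)}+\tfrac{1}{\log(b-\gamma)}\bigr)h_\mu(f,x)$. Your outer inclusion uses the Lipschitz constants of $f,f^{-1}$ for $\tilde d$, which Corollary~\ref{cor 2.12} gives as $16b,16a$; if you compose these naively you get $\tilde d(f^nx,f^ny)\le(16b)^n\tilde d(x,y)$, hence a lower bound $\underline d_\mu(x,\tilde d)\ge\bigl(\tfrac{1}{\log(16a)}+\tfrac{1}{\log(16b)}\bigr)h_\mu(f,x)$. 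These do not match, and the ``absorb the $\gamma$-gap by letting $\gamma\downarrow0$'' escape does not work because the metric $\tilde d$ itself depends on $\gamma$ (through the choice of $n_0$ in the Mather trick); the statement to be proved is an \emph{exact} formula with $a,b$ for each fixed $\tilde d$, not a one-sided bound that tightens as the metric is changed.

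What the paper does, and what your proposal misses, is that the sharp constants live not in the hyperbolicity inequality of $\tilde d$ but in the auxiliary quasi-metric $\rho(x,y)=\max\{a^{-n^-(x,y)},b^{-n^+(x,y)}\}$. The crucial facts are: (i) $\rho$-balls are exactly cylinder sets (Lemma~\ref{lem 3.1}), (ii) two-sided Bowen balls in $\rho$ are also exactly cylinder sets (Lemma~\ref{lem 3.2}), so $B_\rho(x,r)=B_\rho(x,-n,m,r_1)$ with $\tfrac{m+n}{\log(1/r)}\to\tfrac{1}{\log a}+\tfrac{1}{\log b}$ (Lemma~\ref{lem 3.3}), and (iii) $\tfrac14\tilde d\le\rho\le4\tilde d$ uniformly (Corollary~\ref{cor 2.12}), which transfers everything from $\rho$ to $\tilde d$ at the cost of a single multiplicative factor $4$ that washes out in the logarithmic limits. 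In particular, the correct growth bound is not per-iterate Lipschitz, but the accumulated $\rho(f^nx,f^ny)\le b^n\rho(x,y)$, giving $\tilde d(f^nx,f^ny)\le16\,b^n\tilde d(x,y)$ with the factor $16$ appearing once rather than being compounded. So the route you should take is to replace both your ``forward/backward wins'' iteration and your Lipschitz estimate with the cylinder-set description of $\rho$-balls, then invoke the bilipschitz equivalence once at the end; your Brin--Katok, Billingsley, and variational-principle steps are then fine as stated.
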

In addtion, for certain dynamical systems called as  zero-dimensional aperiodic expansive systems, we can remove the condition of $a,b<2$ by using the embeddedness lemma on topological universalities \cite{Kri82} (See Section \ref{Section 6}). 

Recently, the concept of $r$-neutralized entropy has garnered significant attention from scholars, which has an intimate relationship with dimensions. As is well known, the Brin–Katok entropy formula indicates that Bowen balls $\{B_{d}(x,n,r)\}_{n\geq 0}$ can exhibit intricate geometric shapes in the central direction. To facilitate the estimation of the asymptotic measure of sets with diverse geometric configurations, Ovadia and Rodriguez-Hertz \cite{OR24} defined the neutralized Bowen open ball as
$B_{d}(x,n,e^{-nr})=\{y\in \mathcal X: d(f^ix,f^iy)<e^{-nr},0\leq i\leq n\}$
and introduced the neutralized Brin–Katok local entropy. This entropy is calculated by measuring open sets with simpler geometric descriptions. They demonstrated that the neutralized local entropy coincides with the Brin–Katok local entropy almost everywhere. Building on this work, Ovadia \cite{Ova24} further developed the neutralized entropy formula on the unstable manifold. This extension elucidates the relationship between $r$-neutralized unstable entropy, Brin–Katok entropy, and unstable dimensions, highlighting the dependence of $r$. Yang, Chen, and Zhou \cite{YCZ24} successfully established variational principles for the neutralized Bowen topological entropy of compact subsets, closely related to the variational principles for metric mean dimension \cite{LT18}.

To investigate the relationship between $r$-neutralized entropy and dimensions, Dong and Qiao \cite{DQ25} considered a $C^{1+\alpha}$ diffeomorphism $f$ on a closed Riemannian manifold $M$ with the Riemannian metric $d$ and established an equivalence among the $r$-neutralized Brin-Katok entropy, measure-theoretic entropy, and pointwise dimension for any hyperbolic invariant Borel measure $\mu$. They proved that for $\mu$-a.e. $x\in M$,
\begin{align*}
	\overline{h}_{\mu,\tilde{d}}^{BK,r}(f,x)=\underline{h}_{\mu,d}^{BK,r}(f,x)= h_{\mu}(f)+rd_{\mu}(x,d).
\end{align*}
If $\mu$ is hyperbolic ergodic, then for $\mu$-a.e. $x\in M$,
$$\overline{h}_{\mu,d}^{BK,r}(f)=\underline{h}_{\mu,d}^{BK,r}(f)=\overline{h}_{\mu,\tilde{d}}^{BK,r}(f,x)=\underline{h}_{\mu,d}^{BK,r}(f,x)= h_{\mu}(f)+r\dim_{H}(\mu,d).$$
This prompts further inquiry into whether the result remains valid for the hyperbolic metric of expansive homeomorphisms.
The following theorem provides an affirmative answer.
\begin{theorem}\label{thm 1.3}
	Let $f$ be an  expansive homeomorphism of $ \mathcal X$ and $\tilde{d}$ be the metric  obtained in Theorem \ref{thm 1.1}. Given $0<r<\frac{3}{\frac{1}{\log a}+\frac{1}{\log b}}$ ($a,b$ are defined in Section \ref{Section 2}), then for any $\mu \in \mathcal M(f)$, the upper and lower $r$-neutralized (local, Brin-Katok, topology) entropies are equal. If we denote their common value by $h_{\mu,\tilde{d}}^{BK,r}(f,x),h_{\mu,\tilde{d}}^{BK,r}(f), h_{top,\tilde{d}}^{r}(f)$, respectively, then they satisfy the following relations:
	$$d_{\mu}(x,\tilde{d})=h_{\mu,\tilde{d}}^{BK,r}(f,x)\left(\frac{1}{r+\frac{1}{\frac{1}{\log a}+\frac{1}{\log b}}}\right).$$
	Moreover,  if $\mu \in \mathcal E(f)$, then for $\mu$-a.e. $x\in \mathcal X,$ 
	\begin{align*}
		&\dim_{H}(\mu,\tilde{d})=h_{\mu,\tilde{d}}^{BK,r}(f)\left(\frac{1}{r+\frac{1}{\frac{1}{\log a}+\frac{1}{\log b}}}\right),\\
		&\dim_{H}(\mathcal X,\tilde{d})=h_{top,\tilde{d}}^{r}(f)\left(\frac{1}{r+\frac{1}{\frac{1}{\log a}+\frac{1}{\log b}}}\right).
	\end{align*}
\end{theorem}

Additionally, Dong and Qiao \cite[Conjecture 1]{DQ25} proposed a conjecture regarding the existence of analogous variational principles for upper and lower  $r$-neutralized entropy. Specifically, for a  TDS $(\mathcal X,f)$ with the metric $d$ and for any $r>0$, they conjectured that
\begin{align*}
	\overline{h}_{top,d}^{r}(f)=&\sup\{\overline{h}_{\mu,d}^{BK,r}(f):\mu\in \mathcal M(f)\},\\ \underline{h}_{top,d}^{r}(f)=&\sup\{\underline{h}_{\mu,d}^{BK,r}(f):\mu\in \mathcal M(f)\}.
\end{align*}
They verified this conjecture for the topological Markov shift and linear Anosov diffeomorphisms on the torus.  This prompts us to consider whether other systems support this conjecture. Based on current research, we provide a partial answer to this conjecture.
\begin{corollary}\label{cor 1.4}
	Let $f$ be an  expansive homeomorphism of $ \mathcal X$ and $\tilde{d}$ be the metric  obtained in Theorem \ref{thm 1.1}. Then for any $0<r<\frac{3}{\frac{1}{\log a}+\frac{1}{\log b}}$,
	\begin{align*}
		h_{top,\tilde{d}}^{r}(f)=&\left(1+r\left(\frac{1}{\log a}+\frac{1}{\log b}\right)\right)h_{top}(f)\\
		=&\left(1+r\left(\frac{1}{\log a}+\frac{1}{\log b}\right)\right)\sup\{h_{\mu}(f)\colon\mu\in \mathcal M(f)\}\\
		=&\left(1+r\left(\frac{1}{\log a}+\frac{1}{\log b}\right)\right)\sup\{h_{\mu}(f)\colon\mu\in \mathcal E(f)\}\\
		=&\sup\{h_{\mu,\tilde{d}}^{BK,r}(f)\colon\mu\in \mathcal M(f)\}\\
		=&\sup\{h_{\mu,\tilde{d}}^{BK,r}(f)\colon\mu\in \mathcal E(f)\},
	\end{align*}
	where $h_{\mu,\tilde{d}}^{BK,r}(f)$ and $h_{top,\tilde{d}}^{r}(f)$ denote the common value of upper and lower $r$-neutralized local entropy Brin-Katok entropy and $r$-neutralized topological entropy, respectively.
\end{corollary}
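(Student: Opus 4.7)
The argument is essentially a bookkeeping corollary of the two preceding theorems combined with the classical variational principle, so the plan is to chain these ingredients together rather than to introduce new machinery.

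I would first deduce the equality $h_{top,\tilde{d}}^{r}(f) = \bigl(1+r(\tfrac{1}{\log a}+\tfrac{1}{\log b})\bigr) h_{top}(f)$ by applying Theorem~\ref{thm 1.3} to the whole space: the upper and lower $r$-neutralized topological entropies coincide at the common value $h_{top}(f)+r\dim_H(\mathcal{X},\tilde{d})$, so the notation $h_{top,\tilde{d}}^{r}(f)$ is unambiguous. Substituting $\dim_H(\mathcal{X},\tilde{d}) = (\tfrac{1}{\log a}+\tfrac{1}{\log b}) h_{top}(f)$ from Theorem~\ref{thm 1.2} gives the first displayed equality. The next two equalities, where the supremum of $h_\mu(f)$ is taken over $\mathcal{M}(f)$ and $\mathcal{E}(f)$ respectively, then follow from the classical variational principle $h_{top}(f) = \sup_{\mu \in \mathcal{M}(f)} h_\mu(f) = \sup_{\mu \in \mathcal{E}(f)} h_\mu(f)$ after multiplying by the positive constant $1+r(\tfrac{1}{\log a}+\tfrac{1}{\log b})$.

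For the remaining two equalities I would first treat the ergodic case: for $\mu \in \mathcal{E}(f)$, Theorem~\ref{thm 1.3} gives $h_{\mu,\tilde{d}}^{BK,r}(f) = h_\mu(f) + r\dim_H(\mu,\tilde{d})$ as a common constant value, and Theorem~\ref{thm 1.2} converts this to $\bigl(1+r(\tfrac{1}{\log a}+\tfrac{1}{\log b})\bigr) h_\mu(f)$. Taking the supremum over $\mathcal{E}(f)$ and invoking the variational principle yields the desired identity against the ergodic supremum. For the supremum over $\mathcal{M}(f)$, I would use the pointwise portion of Theorem~\ref{thm 1.3} combined with Theorem~\ref{thm 1.2}: at $\mu$-a.e.\ $x$ one has $h_{\mu,\tilde{d}}^{BK,r}(f,x)=h_\mu(f,x)+r d_\mu(x,\tilde{d}) = \bigl(1+r(\tfrac{1}{\log a}+\tfrac{1}{\log b})\bigr) h_\mu(f,x)$, so integrating against $\mu$ preserves the multiplicative relation at the integrated level. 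Since $\mathcal{E}(f) \subseteq \mathcal{M}(f)$ and both sides are monotone in $h_\mu(f)$, the two suprema coincide with the value computed in the ergodic case.

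The only mildly delicate step is confirming, for non-ergodic $\mu$, that the ``common value'' $h_{\mu,\tilde{d}}^{BK,r}(f)$ appearing in the statement is correctly interpreted as the integrated pointwise quantity whose upper and lower versions agree $\mu$-a.e.; this is precisely what the first, pointwise assertion of Theorem~\ref{thm 1.3} provides, so no new analytic obstacle arises. Everything reduces to Theorems~\ref{thm 1.1}--\ref{thm 1.3} and the classical variational principle, and the main care is simply to track the factor $1+r(\tfrac{1}{\log a}+\tfrac{1}{\log b})$ through each equality.
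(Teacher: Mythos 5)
Your argument is correct and takes essentially the same approach as the paper: the paper derives this corollary directly from Propositions 4.4 and 4.5 (the content of Theorem 1.3) together with Theorem 1.2 and the classical variational principle, which is exactly the chain you lay out. The only added care you note---that for non-ergodic $\mu$ the common value $h_{\mu,\tilde{d}}^{BK,r}(f)$ is the integral of the a.e.\ constant pointwise quantity, so the factor $1+r(\tfrac{1}{\log a}+\tfrac{1}{\log b})$ passes through the integral---is correct and implicit in the paper's treatment.
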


Another concept associated with dimension is $\alpha$-entropy, which shares a similar form with $r$-neutralized entropy and was first introduced by Thieullen \cite{Thi91}. In the context of a compact $d$-dimensional Riemannian manifold and a $C^{1+\alpha}$ diffeomorphism $f$, Thieullen developed a novel distance metric known as the $\alpha$-metric, defined as $$d_{n}^{\alpha}(x,y)=\max_{0\leq i\leq n}e^{i\alpha}d(f^ix,f^iy).$$ Thieullen \cite{Thi91} elucidated the relationship between Lyapunov exponents and the local entropies associated with the $\alpha$-metric, thereby extending Pesin's formula \cite{Pes77}.
In 2018, Kawan \cite{Kaw18} proposed a definition of estimation entropy for general systems and provided a lower bound for the estimation entropy of a specific class of systems. Subsequently, Zhong and Chen \cite{ZC21} established Billingsley's theorem and the variational principle for Bowen's estimation entropy, extending the results of \cite{FH12}. Building on these foundational theorems, a natural question arises: 
\begin{question2}\label{ques2}
	For any TDS, are $\alpha$-entropy and $r$-neutralized entropy equal when $\alpha=r$?
\end{question2}
The following Theorem provide a negative answer to this question.
\begin{theorem}\label{thm 1.5}
	Let $f$ be an expansive homeomorphism of $\mathcal X$ and $\tilde{d}$ be the metric  obtained in Theorem \ref{thm 1.1}. Fix
	$0\leq \alpha< \min\{\log a, \log b\}$. Then for any $\mu \in \mathcal M(f)$ and  $\mu$-a.e. $x\in \mathcal X,$
	then for any $\mu \in \mathcal M(f)$, the upper and lower $\alpha$-estimation (local, Brin-Katok, topology) entropies are equal. If we denote their common value by $\widetilde{h_{\mu,\tilde{d}}^{BK,\alpha}}(f,x),\widetilde{h_{\mu,\tilde{d}}^{BK,\alpha}}(f), \widetilde{h_{top,\tilde{d}}^{\alpha}}(f)$, respectively, then they satisfy the following relations:
	\begin{align*}
		d_{\mu}(x,\tilde{d})=\widetilde{h_{\mu,\tilde{d}}^{BK,\alpha}}(f,x)\left(\frac{1}{\log a+\alpha}+\frac{1}{\log b+\alpha}\right).
	\end{align*}
	Moreover, if $\mu \in \mathcal E(f)$, then for $\mu$-a.e. $x\in \mathcal X,$
	\begin{align*}
		\dim_{H}(\mu,\tilde{d})=&\widetilde{h_{\mu,\tilde{d}}^{BK,\alpha}}(f)\left(\frac{1}{\log a+\alpha}+\frac{1}{\log b+\alpha}\right),\\
		\dim_{H}(X,\tilde{d})=&\widetilde{h_{top,\tilde{d}}^{\alpha}}(f)\left(\frac{1}{\log a+\alpha}+\frac{1}{\log b+\alpha}\right).
	\end{align*}
\end{theorem}
However, under hyperbolic metrics, we are able to establish the following variational principle.
\begin{corollary}
	Let $f$ be an expansive homeomorphism of $\mathcal X$ and $\tilde{d}$ be the metric  obtained in Theorem \ref{thm 1.1}.	Fix   $0\leq \alpha< \min\{\log a,\log b\}$. Then we have	
	\begin{align*}
		\widetilde{h_{est,\tilde{d}}^{\alpha}}(f)=\sup\{\widetilde{h_{\mu,\tilde{d}}^{BK,\alpha}}(f)\colon\mu\in \mathcal M(f)\}
		=\sup\{\widetilde{h_{\mu,\tilde{d}}^{BK,\alpha}}(f)\colon\mu\in \mathcal E(f)\}.
	\end{align*}
\end{corollary}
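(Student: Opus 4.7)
\medskip
\noindent\textbf{Proof plan.}
The plan is to reduce this variational principle to the classical variational principle for topological entropy, using Theorem \ref{thm 1.5} as the main tool. Set
\[
C_\alpha := \frac{\frac{1}{\log a}+\frac{1}{\log b}}{\frac{1}{\log a+\alpha}+\frac{1}{\log b+\alpha}},
\]
which is a positive constant depending only on $\alpha,a,b$. From the topological part of Theorem \ref{thm 1.5} we get $h_{est,\tilde{d}}^{\alpha}(f)=C_\alpha\, h_{top}(f)$, so the strategy is to show that each of the two suprema also equals $C_\alpha\, h_{top}(f)$.

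First I would handle the supremum over ergodic measures. By the measure-theoretic part of Theorem \ref{thm 1.5}, for every $\mu\in\mathcal E(f)$ the upper and lower $\alpha$-estimating Brin--Katok entropies coincide and equal $C_\alpha\, h_\mu(f)$, so $h_{\mu,\tilde{d}}^{\tilde{BK},\alpha}(f)$ is well defined and equal to $C_\alpha\, h_\mu(f)$. Therefore
\[
\sup\bigl\{h_{\mu,\tilde{d}}^{\tilde{BK},\alpha}(f):\mu\in\mathcal E(f)\bigr\}
=C_\alpha\sup\bigl\{h_\mu(f):\mu\in\mathcal E(f)\bigr\}=C_\alpha\, h_{top}(f),
\]
where the last equality is the standard variational principle combined with the fact that the entropy supremum is already attained along ergodic measures (by ergodic decomposition).

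Next I would extend this to general invariant measures. The first part of Theorem \ref{thm 1.5} states that for every $\mu\in\mathcal M(f)$ and $\mu$-a.e.\ $x$, the upper and lower local $\alpha$-estimating entropies agree and equal $C_\alpha\, h_\mu(f,x)$. Integrating this pointwise identity against $\mu$, one gets $h_{\mu,\tilde{d}}^{\tilde{BK},\alpha}(f)=C_\alpha\, h_\mu(f)$; the integrability and dominated/monotone convergence issues are the only routine technical points, and they are handled exactly as in the proof of Theorem \ref{thm 1.5}. Hence
\[
\sup\bigl\{h_{\mu,\tilde{d}}^{\tilde{BK},\alpha}(f):\mu\in\mathcal M(f)\bigr\}
=C_\alpha\sup\bigl\{h_\mu(f):\mu\in\mathcal M(f)\bigr\}=C_\alpha\, h_{top}(f),
\]
again by the classical variational principle. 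Combining the three identifications yields the chain of equalities claimed in the corollary.

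The proof is essentially a bookkeeping argument once Theorem \ref{thm 1.5} is in hand; the only delicate point, and the one I expect to require the most care, is the transition from the pointwise equality for a general $\mu\in\mathcal M(f)$ to the equality of the integrated quantities, since one has to make sure the supremum of the integrated local $\alpha$-estimating entropies is not strictly greater than $C_\alpha\, h_\mu(f)$ after integration. This will be handled by invoking the ergodic decomposition theorem and the affine behaviour of $h_\mu(f)$ in $\mu$, together with the ergodic case already established.
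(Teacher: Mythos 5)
Your proof is correct and follows essentially the same route the paper intends: apply Theorem~\ref{thm 1.5} to convert each $\alpha$-estimating quantity into the constant $C_\alpha=\bigl(\tfrac{1}{\log a}+\tfrac{1}{\log b}\bigr)\big/\bigl(\tfrac{1}{\log a+\alpha}+\tfrac{1}{\log b+\alpha}\bigr)$ times the classical entropy, then invoke the classical variational principle. The paper does not write out a separate proof of this corollary; it is presented as an immediate consequence of Theorem~\ref{thm 1.5}, and your derivation makes that deduction explicit. One small remark: the worry you flag in the final paragraph is not actually an issue. By the first part of Theorem~\ref{thm 1.5}, for every $\mu\in\mathcal M(f)$ the upper and lower $\alpha$-estimating local entropies coincide $\mu$-a.e.\ and equal $C_\alpha\, h_\mu(f,x)$, and since the paper's convention (Section~\ref{subsection 2.1}, via Brin--Katok) is that $\int h_\mu(f,x)\,d\mu(x)=h_\mu(f)$ for all $\mu\in\mathcal M(f)$, integrating the pointwise identity yields $h_{\mu,\tilde d}^{\tilde{BK},\alpha}(f)=C_\alpha\, h_\mu(f)$ directly; no appeal to ergodic decomposition or the affineness of $\mu\mapsto h_\mu(f)$ is needed (though it does no harm). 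The argument is otherwise complete and matches the intended one.
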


The remainder of this paper is organized  as follows. For an expansive homeomorphism,  we introduce several dimensions  and construct hyperbolic metrics in Section \ref{Section 2}.
Section \ref{Section 3} explores the relationship between classical entropy and dimension. Sections \ref{Section 4} and \ref{Section 5} focus on $r$-neutralized entropies and $\alpha$-estimation entropies, respectively. Finally, in Section \ref{Section 6}, we enhance our results for zero-dimensional expansive aperiodic homeomorphisms.

\section{Preliminary}\label{Section 2}
In this section, we frist present the definitions of various entropies and dimensions, and construct hyperbolic metrics for an expansive homeomorphism. Then, we proceed to the proof of Theorem \ref{thm 1.1}.

\subsection{Entropies and  dimensions}\label{subsection 2.1}
Let $(\mathcal{X},f)$ be a TDS with the metric $d$. Denote the open ball of radius $r$ and centre $x$ with respect to the metric $d$ by
\begin{equation}\label{equ 2.3}
B_{d}(x,r)=\{y\in \mathcal{X}\colon d(x,y)<  r\}.
\end{equation}
For $n\in \mathbb N$ and $x,y\in \mathcal X$, we define the  \emph{n-th Bowen metric} as
\begin{align*}
	d_{n}(x,y)=\max\limits_{0\leq i\leq n}d(f^{i}x,f^{i}y).
\end{align*}
For $m,n\in \mathbb N$,  the \emph{one-sided Bowen ball} and \emph{two-sided Bowen ball}
are respectively denoted by:
\begin{align*}
	&B_{d}(x,n,r)=\{y\in \mathcal X \colon d(f^ix,f^iy)<r,0\leq i\leq n,i\in \mathbb Z\},\\
	&B_{d}(x,-n,m,r)=\{y\in \mathcal X \colon d(f^ix,f^iy)<r,-n\leq i\leq m,i\in \mathbb Z\}.
\end{align*}
\begin{definition}\cite{Wal82}\label{def 2.1}
	Given $m,n\in \mathbb N$ and $r>0,$  a set $E \subseteq \mathcal X$ is said to be an $(n,r)$-spanning set of $\mathcal X$ if
	$\mathcal X=\bigcup_{x\in E}B_{d}(x,n,r)$ and
	a set $F \subseteq \mathcal X$ is said to be a $(-n,m,r)$-spanning set of $\mathcal X$ if
	$\mathcal X=\bigcup_{x\in E}B_{d}(x,-n,m,r)$.
	Denote by $r_{d}(f,n,r)$ the
	smallest cardinality of $(n,r)$-spanning sets of $\mathcal X$ and by $r_{d}(f,-n,m,r)$ the
	smallest cardinality of $(-n,m,r)$-spanning sets of $\mathcal X$.
\end{definition}

\begin{enumerate}
	\item	The \emph{topological entropy} \cite{Wal82} of $f$ is defined as
	\begin{align*}
		h_{top}(f)=\lim_{r\to 0}\limsup_{n \to \infty}\frac{1}{n}\log r_{d}(f,n,r)
		=\lim_{r\to 0}\liminf_{n \to \infty}\frac{1}{n}\log r_{d}(f,n,r).
	\end{align*}
	
	\item  For any $\delta\in (0,1)$,
	the \emph{Katok measure-theoretic entropy} \cite{Kat80} of $f$  with respect to $\mu\in \mathcal E(f)$ is defined as
	\begin{align*}
		h_{\mu}^{K}(f)=\lim_{r\to 0}\limsup_{n \to \infty}\frac{1}{n}\log r_{d}(f,\mu,\delta,n,r)
		=\lim_{r\to 0}\liminf_{n \to \infty}\frac{1}{n}\log r_{d}(f,\mu,\delta,n,r),
	\end{align*}
	where  $r_{d}(f,\mu,\delta,n,r):=\min \{\# K\colon \mu(\bigcup_{x\in K} B_d(x,n,r)  )\geq 1-\delta\}$ and $\delta$ has no effect on the right-hand’s limits.
	
	\item For $\mu$-a.e. $x\in \mathcal X$,	the \emph{Brin-Katok local entropy} \cite{BK83} of $f$ at $x$ with respect to $\mu\in \mathcal M(f)$ is defined as
	\begin{equation}\label{2.4}
		\begin{split}
			h_{\mu}(f,x)=&\lim_{r \to 0} \limsup_{n \to \infty}-\frac{1}{n}\log\mu(B_{d}(x,n,r))\\
			= &\lim_{r \to 0} \liminf_{n \to \infty} -\frac{1}{n}\log\mu(B_{d}(x,n,r)).
		\end{split}
	\end{equation}
\end{enumerate}
By \cite{BK83} we know that for $\mu$-a.e. $x\in \mathcal X$, $h_{\mu}(f,x)$ is $f$-invariant and $\int h_{\mu}(f,x)d\mu(x)=h_{\mu}(f)$. Moreover, if $\mu$ is ergodic, then $h_{\mu}(f)=h_{\mu}(f,x)$ for $\mu$-a.e. $x\in \mathcal X$.
It is worth noting that these entropies are independent of the choice of the compatible metrics on $\mathcal{X}$.

\begin{definition}
	For each $x\in X$, \emph{the upper and the lower pointwise dimension
	of measure} $\mu\in \mathcal M(\mathcal X)$ at $x$ with respect to the metric  $d$ are defined as follows:
	\begin{equation*}
		\begin{split}
			\overline {d}_{\mu}(x,d)&=\limsup\limits_{r \to 0}\dfrac{\log \mu(B_{d}(x, r))}{\log r},\\
			\underline{d}_{\mu}(x,d)&=\liminf\limits_{r \to 0}\dfrac{\log \mu(B_{d}(x,r))}{\log r}.
		\end{split}
	\end{equation*}
	If $\overline {d}_{\mu}(x,d)=\underline {d}_{\mu}(x,d)=d_{\mu}(x,d)$, we call the common value \emph{the pointwise dimension of
	measure} $\mu$ at $x$ with respect to $d$.	
\end{definition}
In the following, we briefly introduce  definitions of  Hausdorff dimension and Box dimension.

\begin{enumerate}
	\item \emph{Hausdorff dimension of $\mathcal X$}$\colon$
	\begin{align*}
		{\rm dim}_H(\mathcal X,d)=\sup\{s\geq0|H^s(\mathcal X,d)=\infty\},
	\end{align*}
	where $H^s(\mathcal X,d)=\lim\limits_{r\to 0}H^s_r(\mathcal X,d)$
	and
	$$H^s_r(\mathcal X,d)=\lim\limits_{r\to 0}\inf\left\lbrace \sum_{n=1}^{\infty}({\rm diam} E_n)^s\Bigg|\begin{array}{l} \mathcal X=\bigcup\limits_{n=1}^{\infty}E_n\;\text{with}\\
		{\rm diam} E_n<r\;\text{for all}\; n\geq1 \end{array}\right\rbrace .$$

	\item \emph{Hausdorff dimension of $\mu\in \mathcal M(\mathcal X)$}$\colon$
	$${\rm dim}_H(\mu,d):=\inf\{{\rm dim}_H(Y,d):\mu(Y)=1,\;Y\subseteq \mathcal X\}.$$
	
	\item \emph{Upper and lower Box dimension of $\mathcal X$}$\colon$
	\begin{align*}
		\overline{{\rm dim}}_B(\mathcal X,d)&=\limsup_{r \to 0}\frac{\log N(\mathcal X,d,r)}{\log \frac{1}{r}},\\
		\underline{{\rm dim}}_B(\mathcal X,d)&=\liminf_{r \to 0}\frac{\log N(\mathcal X,d,r)}{\log \frac{1}{r}},
	\end{align*}
	where $N(\mathcal X,d,r)$ denotes the smallest cardinality of open balls $B_d(x,r)$ needed to cover $\mathcal X$. If $\overline{{\rm dim}}_B(\mathcal X,d)=\underline{{\rm dim}}_B(\mathcal X,d)$, then we call their common value the \emph{Box dimension of  $\mathcal X$} with respect to $d$
	and  denote it by $\dim_B(\mathcal X,d)$.

	\item \emph{Upper and lower Box dimension of $\mu\in \mathcal M(\mathcal X)$}$\colon$
	\begin{align*}
		\overline{{\rm dim}}_B(\mu,d)&=\lim_{\delta \to 0}\inf\{\overline{{\rm dim}}_B(Y,d):Y\subseteq \mathcal X,\;\mu(Y)>1-\delta\},\\
		\underline{{\rm dim}}_B(\mu,d)&=\lim_{\delta \to 0}\inf\{\underline{{\rm dim}}_B(Y,d):Y\subseteq \mathcal X,\;\mu(Y)>1-\delta\}.
	\end{align*}
	If $\overline{{\rm dim}}_B(\mu,d)=\underline{{\rm dim}}_B(\mu,d)$, then we call their common value the \emph{Box dimension of  $\mu$} with respect to $d$
	and  denote it by $\dim_B(\mu,d)$.
\end{enumerate}

The following Lemma clarifies the relations of these dimensions.
\begin{lemma}\cite{DZG98,You82}\label{lem 2.3}
	\rm (1) $\dim_{H}(\mathcal X,d)\leq \underline{\dim}_B(\mathcal{X},d)  \leq \overline{\dim}_B(\mathcal{X},d)$.
	\begin{enumerate}
		\item [(2)]Let $\mu\in \mathcal M(\mathcal X)$ and $\overline{\delta},\underline{\delta}$ be two constants. Then for $\mu$-a.e. $x\in \mathcal X$, if $$ \underline{\delta}\leq\underline{d}_{\mu}(x,d)\leq  \overline{d}_{\mu}(x,d)\leq \overline{\delta},$$  then
		\begin{align*}
		\underline{\delta}\leq 	\dim_{H}(\mu,d)\leq \underline{\dim}_B(\mu,d)\leq \overline{\dim}_B(\mu,d)\leq \overline{\delta}.
		\end{align*}
	\end{enumerate}

\end{lemma}

\subsection{The construction of hyperbolic metrics}
A homeomorphism $f$ of the compact metric space $(\mathcal X,d)$ is said to be \emph{expansive} if there exists $\epsilon>0$ satisfying
$\sup_{n\in \mathbb{Z}}d(f^nx,f^ny)>  \epsilon$.
Throughout the paper, let $f\colon \mathcal X \rightarrow \mathcal X$ be an expansive homeomorphism.

\begin{definition}
	Let $\epsilon$ be an expansive constant for $f$.	For any $x,y\in \mathcal X$, we define $n^+(x,y)$ and $n^-(x,y)$ by$\colon$
	\begin{align*}
		n^{+}(x,y)=\left\{\begin{array}{cc}
			\infty, & {\rm if~}\sup\limits_{n\geq0}d(f^n x,f^ny)\leq  \epsilon \\
			\min\left\{n\geq 0:d(f^n x,f^ny)> \epsilon\right\}, & {\rm if~}\sup\limits_{n\geq0}d(f^n x,f^ny)>  \epsilon
		\end{array}
		\right.
	\end{align*}
	and
	\begin{align*}
		n^-(x,y)=\left\{\begin{array}{cc}
			\infty, & {\rm if~}\sup\limits_{n\leq0}d(f^{n} x,f^{n}y)\leq  \epsilon \\
			\min\left\{n\geq 0:d(f^{-n} x,f^{-n}y)> \epsilon\right\}, & {\rm if~}\sup\limits_{n\leq0}d(f^{n} x,f^{n}y)> \epsilon.
		\end{array}
		\right.
	\end{align*}
\end{definition}

Inspired by the idea of  \cite{Fat89},  $n^-(x,y),n^+(x,y)$ can be estimated by some constant under some certain condition.
\begin{lemma}\label{lem 2.5}
	There exists $m\in \mathbb N$ such that for any $x,y\in \mathcal X$, if $d(x,y)>\frac{\epsilon}{2}$, then one has $\min\{n^-(x,y),n^+(x,y)\}\leq m$.
\end{lemma}

\begin{proof}
	Define $n(x,y)$ by
	\begin{align*}
		n(x,y)=\left\{\begin{array}{cc}
			\infty, & {\rm if~}x=y, \\
			\min\{n_0\in \mathbb N:\max\limits_{|n|\leq n_0}d(f^nx,f^ny>\epsilon\}, & {\rm if~}x\neq y.
		\end{array}
		\right.
	\end{align*}
	Following the Fathi's argument in \cite{Fat89}, there exists an integer $m$ such that if $d(x,y) > \frac{\epsilon}{2}$, then $n(x,y) \leq m$. Since $\min\{n^-(x,y),n^+(x,y)\} = n(x,y)$, we finish the proof.
\end{proof}

Actually, such $m$ is a \emph{uniformly expansive constant} \cite{URM22} that relates expansive and uniformly expansive.
If fixing such $m$ in Lemma \ref{lem 2.5}, then
there exists $\beta:=\beta(m)\in (1,2]$ such that $\beta^m\leq 2$.
For any $a, b\in (1,\beta]$, we define a function $\rho\colon \mathcal X\times \mathcal X\to [0,\infty)$ by
\begin{align}\label{equ 2.5}
	\rho(x,y)=\max\{a^{-n^-(x,y)},b^{-n^+(x,y)}\}.
\end{align}
Clearly, we have
\begin{enumerate}
	\item $\rho(x,y)=\rho(y,x)$.
	
	\item $\rho(x,y)=0$ if and only if $x=y$.
\end{enumerate}
However, $\rho$ may be not a metric on $\mathcal X$,
but  the following Lemma shows that $\rho$ is compatible with  the topology of $\mathcal X$.
\begin{lemma}
	The function $\rho$ is compatible with  the topology of $\mathcal X$.
\end{lemma}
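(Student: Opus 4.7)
Since $\mathcal X$ is a compact metrizable space, to prove that $\rho$ induces the given topology, it suffices to show that for every sequence $(x_n)$ and every point $x$ in $\mathcal X$, we have $d(x_n,x)\to 0$ if and only if $\rho(x_n,x)\to 0$. (Equivalently, the balls $B_\rho(x,r)$ form a neighborhood basis at $x$ in the original topology.) The plan is to prove each implication separately, using continuity of $f^i$ in one direction and expansiveness together with compactness in the other.

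For the forward direction, I would fix $x\in\mathcal X$ and an integer $N\ge 1$, and argue that if $d(x_n,x)\to 0$, then for every $|i|\le N$ one has $d(f^i x_n,f^i x)\to 0$ by continuity of $f^i$. Uniform continuity of $f^i$ on the compact set $\mathcal X$ (or a simple finite maximum) yields an index $n_0$ such that $d(f^i x_n,f^i x)<\epsilon$ for all $|i|\le N$ and all $n\ge n_0$. By the definition of $n^+$ and $n^-$ this forces $n^+(x_n,x)>N$ and $n^-(x_n,x)>N$, hence $\rho(x_n,x)\le\max\{a^{-N},b^{-N}\}$. Letting $N\to\infty$ gives $\rho(x_n,x)\to 0$.

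For the reverse direction, suppose $\rho(x_n,x)\to 0$. Because $a,b>1$, the definition \eqref{equ 2.1} forces both $n^+(x_n,x)\to\infty$ and $n^-(x_n,x)\to\infty$. I would argue by contradiction: if $d(x_n,x)\not\to 0$, there exist $\delta>0$ and a subsequence $(x_{n_k})$ with $d(x_{n_k},x)\ge\delta$. Compactness of $\mathcal X$ lets me extract a further subsequence $x_{n_k}\to y$, and then $d(y,x)\ge\delta>0$, so $y\neq x$. Expansiveness then provides an integer $j\in\mathbb Z$ with $d(f^j x,f^j y)>\epsilon$, and continuity of $f^j$ gives $d(f^j x_{n_k},f^j x)>\epsilon$ for all large $k$. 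Depending on the sign of $j$ this forces either $n^+(x_{n_k},x)\le j$ or $n^-(x_{n_k},x)\le -j$ along the subsequence, contradicting $n^\pm(x_n,x)\to\infty$.

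The main (minor) obstacle is simply keeping track of the two-sided nature of $n^\pm$: one must treat positive and negative iterates simultaneously and be sure that the expansive inequality $\sup_{n\in\mathbb Z}d(f^n x,f^n y)>\epsilon$ yields a witness at some specific integer $j$, which then falls into exactly one of the two cases governing $n^+$ or $n^-$. Once this bookkeeping is in place, both implications are routine, and compatibility of $\rho$ with the topology of $\mathcal X$ follows.
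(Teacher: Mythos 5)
Your proof is correct and follows essentially the same route as the paper's: one direction rests on continuity of the iterates $f^i$ for $|i|\le N$, and the other on compactness together with expansiveness. The only difference is in packaging: the paper establishes the two ball inclusions $B_d(x,\eta')\subseteq B_\rho(x,\eta)$ and $B_\rho(x,r')\subseteq B_d(x,r)$ directly (obtaining for the latter a uniform bound $m(r)$ by the argument of Lemma~\ref{lem 2.6}), whereas you phrase everything in terms of sequential convergence and obtain the second direction by a subsequence extraction plus a contradiction. In a compact metric (hence first countable) space the two formulations are equivalent, and your subsequence argument sidesteps the need to state the uniform $m(r)$ bound; the paper's uniform version is marginally stronger but not needed for the statement.
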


\begin{proof}
	Notice that for any given $r\in (0,1)$, there exists $m(r)\in \mathbb N$ such that for any $x,y\in \mathcal X$, if $d(x,y)>r, $ then
	$\min\{n^-(x,y),n^+(x,y)\} \leq m(r)$ \cite[Proposition equ 5.23.2]{URM22}, which implies that
	$$\rho(x,y)\geq \min \{a^{-m(r)},b^{-m(r)}\}.$$ Take $0<r'<\min \{a^{-m(r)},b^{-m(r)}\}$. Then
	$B_{\rho}(x,r')\subseteq B_{d}(x,r)$.
	
	We remain to show that  for any $x\in \mathcal  X$ and $\eta\in (0,1)$,  there exists $\eta'>0$ such that $B_{d}(x,\eta')\subseteq B_{\rho}(x,\eta)$.
	Take $n=\max\left\{[\frac{-\log \eta}{\log a}],[\frac{-\log \eta}{\log b}]\right\}+1$ such that $\max\{a^{-n},b^{-n}\}<\eta$.
	Define the set
	$$V=\bigcap_{|k|\leq n}f^{-k}B_{d}(f^kx,\epsilon),$$
	where $\epsilon$ is the expansive constant of $f$. Clearly, $V$ is open and thus there exists $\eta'>0$ such that $B_{d}(x,\eta')\subseteq V$ since $x\in V$.
	Pick $y\in B_{d}(x,\eta')$. Then we have
	$d(f^kx,f^ky)<\epsilon$ for any $-n\leq k\leq n.$
	It implies that $\min\{n^-(x,y),n^+(x,y)\}>n$
	and $\rho(x,y)\leq \max \{a^{-n},b^{-n}\}<\eta$,
	which proves that
	$B_{d}(x,\eta')\subseteq B_{\rho}(x,\eta)$.
\end{proof}

We also need the following result for the construction of  hyperbolic metrics.
\begin{lemma}\label{lem 2.7}
	For any $x,y,z\in \mathcal X$,
	$$\rho(x,y)\leq  \max\{a^m,b^m\}\cdot \max\{\rho(y,z),\rho(x,z)\}\leq 2\max\{\rho(y,z),\rho(x,z)\}.$$
\end{lemma}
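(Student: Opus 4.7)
The plan is to handle the two inequalities separately. The second one, $\max\{a^m,b^m\}\leq 2$, is immediate from $a,b\in(1,\alpha_0]$ together with the choice $\alpha_0^m\leq 2$ fixed just before \eqref{equ 2.1}. For the first, I set $R:=\max\{\rho(x,z),\rho(y,z)\}$, and it suffices to bound each of $a^{-n^-(x,y)}$ and $b^{-n^+(x,y)}$ by $\max\{a^m,b^m\}R$. Moreover, since $\rho$ defined from $f$ with parameters $(a,b)$ coincides with $\rho$ defined from $f^{-1}$ with parameters $(b,a)$, and this operation swaps $n^+$ and $n^-$, the bound on $a^{-n^-(x,y)}$ will follow from the bound on $b^{-n^+(x,y)}$ by passing from $f$ to $f^{-1}$. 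Thus I only need to handle the $b^{-n^+(x,y)}$ term.

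I would begin by assuming $n:=n^+(x,y)<\infty$, since otherwise the term vanishes. By definition $d(f^n x,f^n y)>\epsilon$, so the triangle inequality forces either $d(f^n x,f^n z)>\epsilon/2$ or $d(f^n y,f^n z)>\epsilon/2$; by the symmetry in $x$ and $y$, I may suppose the former. Applying Lemma \ref{lem 2.6} to the pair $(f^n x,f^n z)$ gives $\min\{n^+(f^n x,f^n z),\,n^-(f^n x,f^n z)\}\leq m$.

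I would then run a three-fold case analysis depending on where the resulting separation time sits relative to time $0$ on the orbit of $(x,z)$. If $n^+(f^n x,f^n z)\leq m$ (separation in the future), then $n^+(x,z)\leq n+m$, so $b^{-n}\leq b^m\cdot b^{-n^+(x,z)}\leq b^m R$. If $n^-(f^n x,f^n z)=k\leq m$ with $k\leq n$, then the separation still happens in the future of $x$, giving $n^+(x,z)\leq n-k\leq n$ and hence $b^{-n}\leq b^{-n^+(x,z)}\leq R$. The delicate case is $n^-(f^n x,f^n z)=k\leq m$ with $k>n$: now the separation time $n-k$ is negative, so it contributes instead to $n^-(x,z)\leq k-n$. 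In this case I use the elementary estimates $b^{-n}\leq 1$ and $a^{k-n}\leq a^{m}$ to write $b^{-n}\leq a^m\cdot a^{-(k-n)}\leq a^m\cdot a^{-n^-(x,z)}\leq a^m\rho(x,z)\leq a^m R$.

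The main obstacle is precisely this third subcase: the separation guaranteed by Lemma \ref{lem 2.6} can land on the opposite side of time $0$ from what the naive picture suggests, so one is forced to compare the $b$-scale of $\rho(x,y)$ with the $a$-scale of $\rho(x,z)$. It is exactly to absorb this mismatch that the factor $\max\{a^m,b^m\}$ (rather than merely $b^m$) appears in the statement; the remaining case work is bookkeeping, and the mirror argument through $f^{-1}$ disposes of the $a^{-n^-(x,y)}$ term.
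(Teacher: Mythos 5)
Your proof is correct, and it rests on the same core mechanism as the paper's: apply the triangle inequality at a separation time of $(x,y)$, invoke Lemma \ref{lem 2.6} on the shifted pair, and transport the resulting bound on $n^{\pm}$ back to time zero. The organization, however, is genuinely cleaner. The paper splits into Cases 1--3 according to which of $n^{\pm}(x,y)$ are finite and which realizes $\rho(x,y)$, with nested subcases (A)--(C) on the finiteness of $n^{\pm}$ for the shifted pair. You instead bound the two terms $a^{-n^-(x,y)}$ and $b^{-n^+(x,y)}$ of the maximum \emph{separately}, which makes the paper's Case 3 (``both finite'') redundant, and you dispatch the $a^{-n^-}$ term via the exact identity $\rho_{f,(a,b)}=\rho_{f^{-1},(b,a)}$ together with the symmetry $n^{\pm}_{f^{-1}}=n^{\mp}_{f}$ and the fact that the constant $m$ of Lemma \ref{lem 2.6} is the same for $f$ and $f^{-1}$ --- making explicit what the paper invokes informally as ``similar to Case 1 by replacing $f$ and $b$ by $f^{-1}$ and $a$.'' Your three-way split on whether the separation time for $(x,z)$ supplied by Lemma \ref{lem 2.6} lands beyond time $n$, between $0$ and $n$, or before time $0$ is a tidy replacement for the paper's finiteness casework, and you correctly isolate the third possibility (the ``cross-term'' estimate $b^{-n}\leq a^m\rho(x,z)$) as the one that forces the constant to be $\max\{a^m,b^m\}$ rather than a single scale. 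Each step checks out: in particular $n^-(f^n x,f^n z)=k$ translates to $d(f^{n-k}x,f^{n-k}z)>\epsilon$, giving $n^+(x,z)\leq n-k$ when $k\leq n$ and $n^-(x,z)\leq k-n$ when $k>n$, and $k\leq m$ with $n\geq 0$ yields $a^{m-(k-n)}\geq 1\geq b^{-n}$ in the cross-term case.
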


\begin{proof}
	Without loss of generality, we assume that  $x\neq y$. We divide the proof into three cases.\\
	{\bf Case 1.} If $n^+(x,y)=\infty$ and $n^-(x,y)<\infty$, then $\rho(x,y)=a^{-n^-(x,y)}$.
	
	Since
	$d(f^{-n^-(x,y)}(x),f^{-n^-(x,y)}(y))>\epsilon,$ by the triangular inequality we must have
	$$d(f^{-n^-(x,y)}(z),f^{-n^-(x,y)}(y))>\frac{\epsilon}{2} \quad  \text{or} \quad d(f^{-n^-(x,y)}(z),f^{-n^-(x,y)}(x))>\frac{\epsilon}{2}.$$
	Now we suppose that $d(f^{-n^-(x,y)}(z),f^{-n^-(x,y)}(y))>\frac{\epsilon}{2}$. By Lemma \ref{lem 2.5} we have three subcases to discuss.\\
	(A) If  $n^-(f^{-n^-(x,y)}(z),f^{-n^-(x,y)}(y))=\infty$, then $n^+(f^{-n^-(x,y)}(z),f^{-n^-(x,y)}(y))\leq m.$
	\begin{itemize}
		\item [(A1)] If $n^-(z,y)=\infty$,
		then $n^+(z,y)<\infty$. We get that
		
		$$d_{m}(f^{-n^-(x,y)}(z),f^{-n^-(x,y)}(y))>\epsilon.$$
		It follows that $-n^-(x,y)+m\geq0$ and $0<n^+(z,y)\leq-n^-(x,y)+m$. Thus
		$$\rho(z,y)=b^{-n^+(z,y)}\geq b^{n^-(x,y)-m}\geq b^{-m}.$$
		So that
		$$\rho(x,y)=a^{-n^-(x,y)}\leq1\leq b^m\rho(z,y).$$
		
		\item [(A2)]	If $n^-(z,y)<\infty$, then we have
		$n^-(z,y)\leq n^-(x,y)$ since $$n^-(f^{-n^-(x,y)}(z),f^{-n^-(x,y)}(y))=\infty.$$
		So
		$$\rho(x,y)\leq a^{-n^-(z,y)}\leq
		\rho(y,z)\leq\max\{a^m,b^m\}\rho(y,z).$$
	\end{itemize}
	(B) If
	$n^+(f^{-n^-(x,y)}(z),f^{-n^-(x,y)}(y))=\infty,$ then
	$n^-(f^{-n^-(x,y)}(z),f^{-n^-(x,y)}(y))\leq m.$ It follows that
	$n^-(z,y)\leq n^-(x,y)+m$. Thus $$\rho(z,y)\geq a^{-n^-(z,y)}\geq
	a^{-n^-(x,y)-m}=a^{-m}\rho(x,y).$$\\
	(C) If
	$\max\{n^-(f^{-n^-(x,y)}(z),f^{-n^-(x,y)}(y) ,n^+(f^{-n^-(x,y)}(z),f^{-n^-(x,y)}(y))\}<\infty,$
	then
	$$\min\{n^-(f^{-n^-(x,y)}(z),
	f^{-n^-(x,y)}(y)),n^+(f^{-n^-(x,y)}(z),f^{-n^-(x,y)}(y))\}\leq m.$$
	The proof of the case for
	$$n^-(f^{-n^-(x,y)}(z),f^{-n^-(x,y)}(y))\leq m$$  is
	similar to {\bf Case 1}(B), then we omit the proof. Consider the case that
	$$n^-(f^{-n^-(x,y)}(z),f^{-n^-(x,y)}(y))>m.$$ Then
	$$n^+(f^{-n^-(x,y)}(z),f^{-n^-(x,y)}(y))\leq m.$$ Notice that there exists $0\leq i\leq m$ such that
	$$d(f^{i-n^-(x,y)}(z),f^{i-n^-(x,y)}(y))> \epsilon.$$ If $i-n^-(x,y)\leq0$, then
	$$n^-(z,y)\leq n^-(x,y)-i\leq n^-(x,y).$$ Thus $$ \rho(z,y)\geq
	a^{-n^-(z,y)}\geq a^{-n^-(x,y)}=\rho(x,y).$$ If $i-n^-(x,y)>0$, then
	$$n^+(z,y)\leq i-n^-(x,y)\leq m-n^-(x,y).$$ Hence,
	\begin{align*}
		b^m\rho(z,y)\geq b^{m-n^+(z,y)}\geq b^{n^-(x,y)}\geq 1\geq \rho(x,y).
	\end{align*}\\
	{\bf Case 2.} If $n^+(x,y)<\infty$ and $n^-(x,y)=\infty$, then $\rho(x,y)=b^{-n^+(x,y)}$.
	
	The proof is similar to {\bf Case 1} by replacing $f$ and $b$ with $f^{-1}$ and $a$, respectively.\\
	{\bf Case 3.} If $\max\{n^+(x,y),n^-(x,y)\}<\infty$, we assume that $\rho(x,y)=b^{-n^+(x,y)}$
	for simplicity.
	
	By definition,
	$$d(f^{-n^-(x,y)}(x),f^{-n^-(x,y)}(y))> \epsilon \quad \text{and}\quad d(f^{n^+(x,y)}(x),f^{n^+(x,y)}(y))>\epsilon.$$
	Once more, we may invoke the triangle inequality:
	$$d(f^{n^+(x,y)}(z),f^{n^+(x,y)}(y))> \frac{\epsilon}{2}\quad\hbox{or}\quad d(f^{n^+(x,y)}(z),f^{n^+(x,y)}(x))>\frac{\epsilon}{2},$$
	and
	$$d(f^{-n^-(x,y)}(z),f^{-n^-(x,y)}(y))> \frac{\epsilon}{2}\quad\hbox{or}\quad d(f^{-n^-(x,y)}(z),f^{-n^-(x,y)}(x))> \frac{\epsilon}{2}.$$
	There are 4 possibilities as follows:
	\begin{itemize}
		\item [(a)] $d(f^{n^+(x,y)}(z),f^{n^+(x,y)}(y))>\frac{\epsilon}{2}$ and
		$d(f^{-n^-(x,y)}(z),f^{-n^-(x,y)}(y))>\frac{\epsilon}{2}.$
		
		\item [(b)] $d(f^{n^+(x,y)}(z),f^{n^+(x,y)}(y))>\frac{\epsilon}{2}$ and
		$d(f^{-n^-(x,y)}(z),f^{-n^-(x,y)}(x))>\frac{\epsilon}{2}$.
		
		\item [(c)] $d(f^{n^+(x,y)}(z),f^{n^+(x,y)}(x))>\frac{\epsilon}{2}$ and $d(f^{-n^-(x,y)}(z),f^{-n^-(x,y)}(x))>\frac{\epsilon}{2}.$
		
		\item [(d)] $d(f^{n^+(x,y)}(z),f^{n^+(x,y)}(x))>\frac{\epsilon}{2}$ and $d(f^{-n^-(x,y)}(z),f^{-n^-(x,y)}(y))>\frac{\epsilon}{2}$.
	\end{itemize}
	We only consider the first case.
	
	(a1) If $n^+(z,y)=\infty$, then by Lemma \ref{lem 2.5} again, we have
	$$n^-(f^{n^+(x,y)}(z),f^{n^+(x,y)}(y))\leq m.$$
	Similar to the {\bf Case 1}(A), we get that $$a^m\rho(z,y)\geq
	a^{n^+(x,y)}\geq1\geq\rho(x,y).$$
	
	(a2) If $n^-(z,y)=\infty$, then $n^+(z,y)<\infty$.
	Similar with {\bf Case 1}(B), we get that
	$$b^m\rho(z,y)\geq b^{-n^+(x,y)}=\rho(x,y).$$
	
	(a3)
	If $\max\{n^-(z,y),n^+(z,y)\}<\infty$, then
	$$n^-(f^{n^+(x,y)}(z),f^{n^+(x,y)}(y))\leq n^+(x,y)+n^-(z,y)<\infty.$$
	Similar to the above cases, we consider the case that
	$$n^+(f^{n^+(x,y)}(z),f^{n^+(x,y)}(y))<\infty.$$  Then
	$$\min\{n^+(f^{n^+(x,y)}(z),f^{n^+(x,y)}(y)),n^-(f^{n^+(x,y)}(z),f^{n^+(x,y)}(y))\}\leq
	m.$$
	\begin{itemize}
		\item [(i)] If $n^-(f^{n^+(x,y)}(z),f^{n^+(x,y)}(y))\leq m$, by considering two subcases, we have
		$\rho(x,y)\leq b^m\rho(z,y).$
		
		\item [(ii)] If $n^+(f^{n^+(x,y)}(z),f^{n^+(x,y)}(y))\leq m$,  then a direct observation is that  $n^+(z,y)\leq n^+(x,y)+m.$
		Thus
		$b^m\rho(z,y)\geq \rho(x,y).$
	\end{itemize}
	
	In light of the aforementioned cases, we always have
	$$ \rho(x,y)\leq \max\{a^m,b^m\}\cdot\max\{\rho(z,y),\rho(z,x)\}\leq 2\max\{\rho(z,y),\rho(z,x)\}.$$
\end{proof}

For some certain non-negative real-valued function on $\mathcal X\times \mathcal X$,
Frink's metrization Theorem \cite{Fri37}
guarantees the existence of
the metric compatible with the topology of $\mathcal X$.
\begin{lemma}\label{lem 2.8}
	Let $\mathcal X$ be a nonempty set. If the function $\rho:\mathcal X\times \mathcal X\to [0,\infty)$
	satisfies the following three conditions$\colon$for any $x,y,z\in \mathcal X$,
	\begin{enumerate}
		\item  $\rho(x,y)=0$ if and only if $x=y;$
		
		\item $\rho(x,y)=\rho(y,x);$
		
		\item $\rho(x,y)\leq 2\max\{\rho(z,y),\rho(z,x)\};$
	\end{enumerate}
	then there exists a metric  $D:\mathcal X\times \mathcal X\to [0,\infty)$ such that for any $x,y\in \mathcal X,$
	$$D(x,y)\leq\rho(x,y)\leq 4D(x,y).$$
\end{lemma}

\begin{remark}\label{rem 2.9}
	If we modify the third condition of Lemma \ref{lem 2.8} to $\rho(x,y)\leq K\max\{\rho(z,y),\rho(z,x)\}$, where $K>2$, then we are failed to construct the metric following Frink's approach. Schroeder  has presented a counterexample in \cite{Sch06}.	This is why we
	impose the condition  $\max\{a^m,b^m\}\leq 2$.
\end{remark}

Based on this result, we can  construct a metric $D\in \mathcal D(\mathcal X)$  satisfying
\begin{align}\label{equ 2.6}
	D(x,y)\leq\rho(x,y)\leq 4D(x,y),\quad x,y\in \mathcal X.
\end{align}
We present a basic property for the metric $D$.

\begin{lemma}\label{lem 2.10}
	For any $ x,y\in \mathcal X$ and $n\in \mathbb N$, if
	$$\max\limits_{|i|\leq
		n-1}\{D(f^{i}x,f^{i}y)\}\leq \frac{1}{4}\min\{\frac{1}{a},\frac{1}{b}\},$$ then
	$$ \max\left\{\frac{D(f^{n}x,f^{n}y)}{b^{n}},\frac{D(f^{-n}x,f^{-n}y)}{a^{n}}\right\}\geq \frac{1}{4}D(x,y).$$
\end{lemma}

\begin{proof}
	Since $\rho(x,y)\leq 4D(x,y)$, we have
	$$\max\limits_{|i|\leq
		n-1}\{\rho(f^{i}x,f^{i}y)\}\leq \min\{\frac{1}{a},\frac{1}{b}\}.$$
	It is easy to see that for any $|i|\leq
	n-1$,
	$ \rho(f^{i}x,f^{i}y)\leq \min\{a^{-1},b^{-1}\}$.
	Then we have
	\begin{align}\label{equ 2.7}
		\min\{n^+(f^{i}x,f^{i}y),n^-(f^{i}x,f^{i}y)\}\geq 1.
	\end{align}
	Noting that under such condition,
	$$n^-(f^{-1}x,f^{-1}y)= n^-(x,y)-1, n^+(fx,fy)= n^+(x,y)-1,$$
	then the inequality (\ref{equ 2.7}) implies that $\min\{n^+(x,y),n^-(x,y)\}\geq n$.
	If  $\rho(x,y)=a^{-n^-(x,y)}$, then
	$$ \rho(f^{-n}x,f^{-n}y)\geq a^{- n^-(f^{-n}x,f^{-n}y)}= a^{
		-n^-(x,y)+n}=a^n\rho(x,y).$$
	If  $\rho(x,y)=b^{- n^+(x,y)}$, then
	$$ \rho(f^nx,f^ny)\geq b^{- n^+(f^nx,f^ny)}= b^{
		-n^+(x,y)+n}=b^n\rho(x,y).$$
	Thus
	\begin{align*}
		\max\left\{\frac{\rho(f^nx,f^ny)}{b^n},\frac{\rho(f^{-n}x,f^{-n}y)}{a^n}\right\}\geq
		\rho(x,y).
	\end{align*}
	Since $D(x,y)\leq\rho(x,y)\leq 4D(x,y)$, we have
	\begin{align*} \max\left\{\frac{D(f^{n}x,f^{n}y)}{b^{n}},\frac{D(f^{-n}x,f^{-n}y)}{a^{n}}\right\}\geq \frac{1}{4}D(x,y).\end{align*}
\end{proof}

We are ready to present the proof of Theorem \ref{thm 1.1}.
\begin{proof}[Proof of Theorem \ref{thm 1.1}] Take the metric $D$  defined in the above. Then
	for any $a,b\in(1,\beta]$ and for any small enough $\gamma>0$,
	there exists  $n_0\in \mathbb N$ satisfying
	\begin{align}\label{equ 2.8}
		k_1=4^{-\frac{1}{n_0}}a> a-\gamma, k_2=4^{-\frac{1}{n_0}}b> b-\gamma,
	\end{align}
	where	$n_0$ only depends on the choice of $a,b,\gamma.$
	Using the Mather's trick \cite{Mat82}, we define a metric $\tilde{d}$ by
	\begin{align}\label{equ 2.9}
		\tilde{d}(x,y)=\max_{0\leq i\leq
			n_0-1}\left\{\max\{\frac{D(f^{-i}x,f^{-i}y)}{k_1^{i}},\frac{D(f^{i}x,f^{i}y)}{k_2^{i}}\}\right\}.
	\end{align}
	It is easy to check that $\tilde{d}$ is compatible with the topology of $\mathcal X$.
	Let $$A_1=\max_{0< i\leq
		n_0-1}\left\{\frac{D(f^{-i}x,f^{-i}y)}{k_1^{i}}\right\} \quad \text{and}\quad A_2=\max_{0< i\leq
		n_0-1}\left\{\frac{D(f^{i}x,f^{i}y)}{k_2^{i}}\right\}.$$
	Then
	\begin{align*}
		\tilde{d}(f^{-1}x,f^{-1}y)
		& \geq  \max_{0\leq i\leq
			n_0-1}\left\{\frac{D(f^{-i-1}x,f^{-i-1}y)}{k_1^{i}}\right\}\\
		& \geq  k_1 \max\left\{A_1, \frac{D(f^{-n_0}x,f^{-n_0}y)}{k_1^{n_0}}\right\}
	\end{align*}
	and
	\begin{align*}
		\tilde{d}(fx,fy)
		& \geq  \max_{0\leq i\leq
			n_0-1}\left\{\frac{D(f^{i+1}x,f^{i+1}y)}{k_2^{i}}\right\}\\
		& \geq k_2 \max\left\{A_2, \frac{D(f^{n_0}x,f^{n_0}y)}{k_2^{n_0}}\right\}.
	\end{align*}
	Thus
	\begin{align*}
		h & \doteq
		\max\left\{\frac{\tilde{d}(f^{-1}x,f^{-1}y)}{k_1},\frac{\tilde{d}(fx,fy)}{k_2}\right\}\\
		& \geq  \max\left\{A_1, A_2,\frac{D(f^{-n_0}x,f^{-n_0}y)}{k_1^{n_0}},\frac{D(f^{n_0}x,f^{n_0}y)}{k_2^{n_0}}\right\} \\
		& =  \max\left\{A_1, A_2, 4\max\{\frac{D(f^{-n_0}x,f^{-n_0}y)}{a^{n_0}},\frac{D(f^{n_0}x,f^{n_0}y)}{b^{n_0}}\}\right\}.
	\end{align*}
	By (\ref{equ 2.9}) we notice that
	\begin{align}\label{equ 2.10}
		\tilde{d}(x,y)=\max\{D(x,y),A_{1},A_{2}\}.
	\end{align}
	If $\tilde{d}(x,y)<\frac{1}{4}\min\{\frac{k_1^{-n_0+1}}{a},\frac{k_2^{-n_0+1}}{b}\}$, then
	$$\max_{|i|\leq
		n-1}\{D(f^{i}(x),f^{i}(y))\}<\frac{1}{4}\min\left\{\frac{1}{a},\frac{1}{b}\right\}.$$
	By Lemma \ref{lem 2.10}, we have
	$$h=\max\left\{\frac{\tilde{d}(f^{-1}x,f^{-1}y)}{k_1},\frac{\tilde{d}(fx,fy)}{k_2}\right\}\geq
	D(x,y).$$
	It implies
	$$h\geq \max\{A_{1},A_{2},D(x,y)\}=\tilde{d}(x,y)$$
	by (\ref{equ 2.10}).
	Hence
	$$\max\left\{\frac{\tilde{d}(f^{-1}x,f^{-1}y)}{a-\gamma},\frac{\tilde{d}(fx,fy)}{b-\gamma}\right\}\geq
	\tilde{d}(x,y).$$
	Since $\mathcal X$ is compact, we can find $\epsilon'>0$ such that if $$\tilde{d}(x,y)\geq\frac{1}{4}\min\{\frac{k_1^{-n_0+1}}{a},\frac{k_2^{-n_0+1}}{b}\},$$
	then
	$$\max\left\{\frac{\tilde{d}(f^{-1}x,f^{-1}y)}{a-\gamma},\frac{\tilde{d}(fx,fy)}{b-\gamma}\right\}\geq\epsilon',$$
	which finishes the proof.	
\end{proof}

\begin{corollary}\label{cor 2.11}
	For any $x,y\in \mathcal X$, 
	\begin{align*}
		D(x,y)\leq \rho(x,y)\leq 4D(x,y) \quad \text{and} \quad	\frac{1}{4}\tilde{d}(x,y)\leq \rho(x,y)\leq 4\tilde{d}(x,y).
	\end{align*}
	Moreover, $f$ and $f^{-1}$ are Lipschitz with respect to $\tilde{d}$.
\end{corollary}

\begin{proof}
	It is straightforward to verify that
	\begin{align*}
		\rho(fx,fy)=&\max \{a^{-n^-(fx,fy)},b^{-n^+(fx,fy)}\}\leq b\rho(x,y), \\
		\rho(f^{-1}x,f^{-1}y)=&\max \{a^{-n^-(f^{-1}x,f^{-1}y)},b^{-n^+(f^{-1}x,f^{-1}y)}\}\leq a\rho(x,y).
	\end{align*}
	From equalities (\ref{equ 2.6}) and (\ref{equ 2.6}), observing that
	$\frac{a}{k_1}=4^{\frac{1}{n_0}}$ and $\frac{b}{k_2}=4^{\frac{1}{n_0}}$,
	one has
	\begin{align*}
		\frac{1}{4}\rho(x,y)\leq D(x,y)\leq\tilde{d}(x,y)=&\max_{0\leq i\leq
			n_0-1}\left\{\frac{D(f^{-i}x,f^{-i}y)}{k_1^{i}},\frac{D(f^{i}x,f^{i}y)}{k_2^{i}}\right\}\\
		\leq &\max \left\{ (\frac{a}{k_1})^{n_0-1}\rho(x,y),(\frac{b}{k_2})^{n_0-1}\rho(x,y) \right\}\\
		\leq &4\rho(x,y),
	\end{align*}
	which implies that \begin{align*}
		\tilde{d}(fx,fy)\leq 16b\tilde{d}(x,y) \quad \text{and} \quad
		\tilde{d}(f^{-1}x,f^{-1}y)\leq 16a\tilde{d}(x,y).
	\end{align*}
	Therefore, $f$ is Lipschitz with
	Lipschitz constant $16b$ for  $\tilde{d}$ and
	$f^{-1}$ is Lipschitz with
	Lipschitz constant $16a$ for  $\tilde{d}$.
\end{proof}

\section{Dimension of invariant sets and measures}\label{Section 3}
In this section, we investigate the pointwise dimension of   invariant measures of  $f$ . We prove that every ergodic invariant measure of  $f$  is exact dimensional, and  compute the Hausdorff dimension of  $\mathcal X$  with respect to the hyperbolic metric  $\tilde{d}$.

\subsection{Relationship between (Bowen) balls and cylinder sets}
For every $x\in \mathcal X$, and any two positive integers $p,q$, we define the
cylinder set as
\begin{align*}
	C_{-q}^p(x)=\{y\in \mathcal X: n^+(x,y)\geq  p,n^-(x,y)\geq   q\}.
\end{align*}
For small enough $r>0$, let $p(r),q(r)$ be two unique positive integers such that
\begin{align}\label{equ 3.11}
	b^{-p(r)} < r\leq
	b^{-p(r)+1} \quad \text{and}\quad a^{-q(r)}< r\leq  a^{-q(r)+1}.
\end{align}
Then we can use the cylinder set to describe the
open ball of $\mathcal X $.

\begin{lemma}\label{lem 3.1}For every $x\in \mathcal X$,
	\begin{itemize}
		\item[\rm (1)] $B_{\rho}(x,r)=C_{-q(r)}^{p(r)}(x).$
		
		\item[\rm (2)] $\lim\limits_{r\to 0}\frac{p(r)+q(r)}{\log \frac{1}{r}}=
		\frac{1}{\log a}+\frac{1}{\log b}.$
	\end{itemize}
\end{lemma}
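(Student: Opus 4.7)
\medskip

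\noindent\textbf{Proof proposal.} Both parts of the lemma are essentially unpackings of the definitions of $\rho$, $n^{\pm}$, and the integers $p(r), q(r)$. My plan is to handle (1) first as a pointwise set equality, then derive (2) by a direct $\log$-estimate from the defining inequalities (\ref{equ 3.1}).

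For (1), the strategy is to translate the condition $\rho(x,y) < r$ into two simultaneous integer inequalities. Starting from $\rho(x,y)=\max\{a^{-n^-(x,y)},b^{-n^+(x,y)}\}$, I observe that $y\in B_{\rho}(x,r)$ is equivalent to the two conditions $a^{-n^-(x,y)}<r$ and $b^{-n^+(x,y)}<r$ holding simultaneously (with the convention $a^{-\infty}=b^{-\infty}=0$, so the infinite cases are automatic). Taking logarithms and using that $n^{\pm}(x,y)\in \mathbb{N}\cup\{\infty\}$, I want to show $a^{-n^-(x,y)}<r \iff n^-(x,y)\geq q(r)$ and similarly for $n^+$ and $p(r)$. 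The defining inequalities $a^{-q(r)}<r\leq a^{-q(r)+1}$ give $q(r)-1\leq \log(1/r)/\log a < q(r)$, so for an integer $n^-(x,y)$, the strict inequality $a^{n^-(x,y)}>1/r$ is equivalent to $n^-(x,y)\geq q(r)$. The careful placement of strict versus non-strict inequalities in (\ref{equ 3.1}) is precisely what makes this equivalence clean; I will highlight this point, since it is the only place where a little care is required. Once both equivalences are in hand, $y\in B_{\rho}(x,r)\iff \{n^+(x,y)\geq p(r)\text{ and }n^-(x,y)\geq q(r)\}\iff y\in C_{-q(r)}^{p(r)}(x)$.

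For (2), my plan is a direct squeeze. From (\ref{equ 3.1}) I take logarithms to get
\begin{equation*}
\frac{\log(1/r)}{\log b}<p(r)\leq \frac{\log(1/r)}{\log b}+1,\qquad \frac{\log(1/r)}{\log a}<q(r)\leq \frac{\log(1/r)}{\log a}+1.
\end{equation*}
Adding these and dividing by $\log(1/r)$ gives
\begin{equation*}
\frac{1}{\log a}+\frac{1}{\log b}<\frac{p(r)+q(r)}{\log(1/r)}\leq \frac{1}{\log a}+\frac{1}{\log b}+\frac{2}{\log(1/r)},
\end{equation*}
and letting $r\to 0$ (so $\log(1/r)\to\infty$) yields the desired limit.

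There is no real obstacle here: the argument is entirely bookkeeping. The only subtle point, as noted above, is matching the strict/non-strict inequalities between the definition of the open $\rho$-ball and the asymmetric definition of $p(r), q(r)$ in (\ref{equ 3.1}); once this is observed, both parts follow in a few lines. This lemma will serve as the bridge, used repeatedly in Sections \ref{Section 3}--\ref{Section 5}, to convert metric balls in $\tilde d$ (which is comparable to $\rho$ by Corollary \ref{cor 2.12}) into dynamically defined cylinder sets, so that Bowen-type estimates for entropies translate directly into estimates for pointwise dimension via the asymptotic $(p(r)+q(r))/\log(1/r)\to 1/\log a+1/\log b$.
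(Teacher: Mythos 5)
Your proposal is correct and follows essentially the same route as the paper's own proof: part (1) is a direct translation of the strict inequality $\rho(x,y)<r$ into $n^-(x,y)\geq q(r)$ and $n^+(x,y)\geq p(r)$ via the defining two-sided bounds in (\ref{equ 3.1}), and part (2) is the same logarithmic squeeze giving $p(r)/\log(1/r)\to 1/\log b$, $q(r)/\log(1/r)\to 1/\log a$. The only cosmetic difference is that you phrase (1) as a single if-and-only-if chain while the paper argues the two inclusions separately; both hinge on exactly the same strict-versus-nonstrict bookkeeping you flag.
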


\begin{proof}
	(1) For any $y\in C_{-q(r)}^{p(r)}(x)$, if $n^+(x,y)\geq   p(r)$ and
	$n^-(x,y)\geq   q(r)$, then $$\rho(x,y)= \max
	\{a^{-n^-(x,y)},b^{-n^+(x,y)}\}<  r.$$ Thus $$C_{-q(r)}^{p(r)}(x)\subseteq
	B_{\rho}(x,r).$$
	For the converse relation, if $y\in B_{\rho}(x,r)$,  that is,
	$$\rho(x,y)= \max
	\{a^{-n^-(x,y)},b^{-n^+(x,y)}\}< r<\min\{a^{-q(r)+1},b^{-p(r)+1}\},$$
	then we have
	$$n^+(x,y)>  p(r)-1 \quad\hbox{ and}\quad n^-(x,y)>  q(r)-1.$$
	Therefore,
	$$B_{\rho}(x,r)\subseteq C_{-q(r)}^{p(r)}(x).$$
	
	(2) By the definitions of
	$p(r),q(r)$, we have
	$$p(r)-1\leq \frac{\log \frac{1}{r}}{\log b}< p(r) \quad \text{and} \quad q(r)-1\leq \frac{\log \frac{1}{r}}{\log a}< q(r).$$
	Thus $$\lim_{r \to 0}\frac{p(r)}{\log \frac{1}{r}}=\frac{1}{\log
		b}\quad\hbox{and}\quad \lim_{r \to 0}\frac{q(r)}{\log \frac{1}{r}}= \frac{1}{\log a}.$$ Hence
	\begin{align*}
		\lim_{r \to 0}\frac{p(r)+q(r)}{\log \frac{1}{r}}=\frac{1}{\log a}+\frac{1}{\log b}.
	\end{align*}
\end{proof}

Recall that the two-sided Bowen ball of radius $r$ and center $x$ with respect to the function $\rho$ is denoted as
$$B_{\rho}(x,-n,m,r)=\{y\in \mathcal X: \rho(f^ix,f^iy)<  r,-n\leq i\leq m,i\in \mathbb Z
\}.$$
For any $r\in (0,1)$,  the $d$-distance  between  points in the Bowen ball $B_{-n}^m(x,\rho,r)$  and $x$  is  not more than the expansive constant $\epsilon$. Namely, if $$\rho(f^{i}x,f^{i}y)=\max\{a^{-n^-(f^{i}x,f^{i}y)},b^{-n^+(f^{i}x,f^{i}y)}\}< r<1, -n\leq i\leq m,$$
then $\min\{n^-(f^{i}x,f^{i}y),n^+(f^{i}x,f^{i}y)\}\geq 1$. Hence $d(f^{i}x,f^{i}y)\leq \epsilon$.

Similarly, we can use the cylinder set to describe the Bowen ball.
\begin{lemma}\label{lem 3.2}
	For every $x\in \mathcal X$, $n,m\in \mathbb N$ and small enough $r\in (0,1)$,
	we have
	$$B_{\rho}(x,-n,m,r)=C_{-q(r)-n}^{p(r)+m}(x).$$
\end{lemma}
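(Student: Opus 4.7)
My plan is to reduce this to Lemma 3.1(1) by applying that result pointwise along the orbit segment, and then consolidate the resulting index ranges.

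The first step is to note that Lemma 3.1(1) applied to the pair $(f^i x, f^i y)$ yields the equivalence
\[
\rho(f^i x, f^i y)<r \iff n^+(f^i x, f^i y)\geq p(r)\ \text{and}\ n^-(f^i x, f^i y)\geq q(r).
\]
Hence $y\in B_{\rho}(x,-n,m,r)$ if and only if both inequalities hold simultaneously for every $i$ with $-n\leq i\leq m$. The next step is to translate each shifted quantity back to the original basepoint. Unwinding the definitions of $n^{\pm}$, the inequality $n^+(f^i x, f^i y)\geq p(r)$ is equivalent to $d(f^j x, f^j y)\leq \epsilon$ for all $i\leq j\leq i+p(r)-1$, and similarly $n^-(f^i x, f^i y)\geq q(r)$ is equivalent to $d(f^j x, f^j y)\leq \epsilon$ for all $i-q(r)+1\leq j\leq i$.

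Now I would take the conjunction over $-n\leq i\leq m$. The union of the forward index intervals $[i,\,i+p(r)-1]$ is the single interval $[-n,\,m+p(r)-1]$, and the union of the backward intervals $[i-q(r)+1,\,i]$ is $[-n-q(r)+1,\,m]$. Combining these two, the full condition characterizing membership in $B_{\rho}(x,-n,m,r)$ becomes
\[
d(f^j x, f^j y)\leq \epsilon\quad\text{for all }-n-q(r)+1\leq j\leq m+p(r)-1.
\]
Repackaging this via the definitions of $n^{\pm}$ gives exactly $n^+(x,y)\geq p(r)+m$ and $n^-(x,y)\geq q(r)+n$, which is the defining condition of $C_{-q(r)-n}^{\,p(r)+m}(x)$. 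This establishes both inclusions simultaneously.

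There is no real analytic obstacle here; the main thing to be careful about is bookkeeping with the index intervals and the strict/non-strict inequalities in the definitions of $n^{\pm}(x,y)$ and of $p(r),q(r)$, which is why the assumption $r\in(0,1)$ small is invoked (so that $p(r),q(r)\geq 1$ and the expansivity regime is in force throughout the relevant orbit window). I would also remark in passing that the argument is symmetric in the forward and backward directions, so the two halves of the proof are parallel and can be presented together rather than separately.
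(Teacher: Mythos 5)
Your proof is correct and takes essentially the same route as the paper: both reduce via Lemma \ref{lem 3.1}(1) and then propagate the $n^{\pm}$ conditions along the orbit segment from $-n$ to $m$. The only difference in presentation is that the paper records this propagation through the recursion identities $n^{+}(f^{i-1}x,f^{i-1}y)=n^{+}(f^{i}x,f^{i}y)+1$ and $n^{-}(f^{j+1}x,f^{j+1}y)=n^{-}(f^{j}x,f^{j}y)+1$ (equation (3.2)), whereas you unwind $n^{\pm}$ directly into $\epsilon$-separation conditions over index intervals and take a union; these are two equivalent encodings of the same bookkeeping.
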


\begin{proof}
	Take $y\in B_{\rho}(x,-n,m,r)$.	 Then we have $\rho(f^{i}x,f^{i}y)<r$  for $i=-n,-n+1,\dots ,0,1,\dots m$ and thus by  (\ref{equ 3.11}),
	$$n^+(f^{i}x,f^{i}y)\geq
	p(r)\quad\hbox{ and}\quad n^-(f^{i}x,f^{i}y)\geq  q(r).$$
	By a direct observation,  we get
	\begin{equation}\label{equ 3.12}
		\begin{split}
			n^+(f^{i-1}x,f^{i-1}y)&=n^+(f^{i}x,f^{i}y)+1,\\
			n^-(f^{j+1}x,f^{j+1}y)&=n^-(f^{j}x,f^{j}y)+1
		\end{split}
	\end{equation}
	for $i=1,\dots
	m-1,m,$ and
	$j=-1,\dots,-n+1, -n$. Thus $$n^+(x,y)\geq  p(r)+m \quad\hbox{and}\quad
	n^-(x,y)\geq  q(r)+n.$$ Hence, $B_{\rho}(x,-n,m,r)\subseteq
	C_{-q(r)-n}^{p(r)+m}(x).$
	
	For the converse argument, if $y\in C_{-q(r)-n}^{p(r)+m}(x)$, then
	$$
	n^+(x,y)\geq
	p(r)+m\quad\hbox{ and}\quad n^-(x,y)\geq q(r)+n.$$	We can deduce that
	$$n^+(f^{i}x,f^{i}y)\geq
	p(r)
	\quad\hbox{and}\quad n^-(f^{i}x,f^{i}y)\geq q(r)$$
	for any $-n\leq i\leq m$ by (\ref{equ 3.12}). Therefore, $\rho(f^{i}x,f^{i}y)< r$,  which completes the proof.
\end{proof}

Then we can link the open ball and the Bowen ball by the cylinder set.
\begin{lemma}\label{lem 3.3}
	For every $r_1\in (0,1)$ and small enough $r\in (0,r_1)$,
	there are two positive integers $m,n$ such that 	
	\begin{enumerate}
		\item $m\rightarrow\infty$, $n\rightarrow\infty$, as $r\rightarrow0;$
		
		\item for every $x\in \mathcal X$, $B_{\rho}(x,r)=B_{\rho}(x,-n,m,r_1)=C_{-q(r_1)-n}^{p(r_1)+m}(x);$
		
		\item $\lim\limits_{r\to 0}\frac{m+n}{\log \frac{1}{r}}= \frac{1}{\log a}+\frac{1}{\log b}.$
	\end{enumerate}
\end{lemma}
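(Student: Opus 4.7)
The plan is to reduce \Cref{lem 3.3} directly to \Cref{lem 3.1} and \Cref{lem 3.2}, essentially by matching the cylinder indices obtained from each description of the relevant ball.

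First, I will apply \Cref{lem 3.1}(1) to rewrite the open ball of radius $r$ as the cylinder set
\[
B_{\rho}(x,r)=C_{-q(r)}^{p(r)}(x),
\]
and apply \Cref{lem 3.2} to rewrite the Bowen ball of radius $r_1$ as
\[
B_{\rho}(x,-n,m,r_1)=C_{-q(r_1)-n}^{p(r_1)+m}(x).
\]
This reduces the identity in (2) to the algebraic matching of indices $q(r)=q(r_1)+n$ and $p(r)=p(r_1)+m$, so the natural definition is
\[
n\doteq q(r)-q(r_1), \qquad m\doteq p(r)-p(r_1).
\]
Since these integers are forced to be \emph{positive}, I would first remark that for all sufficiently small $r\in(0,r_1)$ one has $q(r)>q(r_1)$ and $p(r)>p(r_1)$: this follows because $p(r_1),q(r_1)$ are fixed while $p(r),q(r)\to\infty$ as $r\to 0$ (directly from the definitions \eqref{equ 3.1}). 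This observation simultaneously gives (1), namely $m,n\to\infty$ as $r\to 0$.

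For (3), using $m+n=p(r)+q(r)-p(r_1)-q(r_1)$, I will divide by $\log\tfrac{1}{r}$ and note that the constants $p(r_1)+q(r_1)$ contribute a term tending to $0$. Applying \Cref{lem 3.1}(2) to the remaining quotient then yields
\[
\lim_{r\to 0}\frac{m+n}{\log\frac{1}{r}}=\lim_{r\to 0}\frac{p(r)+q(r)}{\log\frac{1}{r}}=\frac{1}{\log a}+\frac{1}{\log b}.
\]

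No genuine obstacle is expected; the only subtlety is the careful bookkeeping that makes $m$ and $n$ positive integers for $r$ small compared to $r_1$, and that the chosen $m,n$ simultaneously realize both cylinder representations. Once the cylinder indices are aligned by design, all three conclusions follow immediately from the two preceding lemmas.
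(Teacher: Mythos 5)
Your proposal is correct and matches the paper's own argument: the paper likewise sets $m=p(r)-p(r_1)$, $n=q(r)-q(r_1)$ and invokes Lemmas \ref{lem 3.1} and \ref{lem 3.2} to conclude. You merely spell out the index-matching and positivity checks that the paper leaves implicit.
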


\begin{proof}
	Let $m=p(r)-p(r_1)$ and $n=q(r)-q(r_1)$.
	Then the desired results  follow directly from Lemmas \ref{lem 3.1} and \ref{lem 3.2}.
\end{proof}

\subsection{Proof of Theorem \ref{thm 1.2}}
Since $f\colon (\mathcal X,\tilde{d})\to (\mathcal X,\tilde{d})$ is a homeomorphism, then one can show that we can use two-sided Bowen ball to describe entropies.
\begin{proposition}\label{prop 3.4}
The Brin-Katok local entropy of $f$ at $x$ with respect to $\mu \in \mathcal{M}(f)$ is equivalent with
\begin{equation*}
	\begin{split}
		h_{\mu}(f,x) & =\lim_{r \to 0} \limsup_{m,n \to \infty}-\frac{\log\mu(B_{\tilde{d}} (x,-n,m,r))}{n+m}
		 = \lim_{r \to 0} \liminf_{m,n \to \infty}-\frac{\log\mu(B_{\tilde{d}} (x,-n,m,r))}{n+m}.
	\end{split}
\end{equation*}
and the topological entropy of $f$ is equivalent with
\begin{equation*}
	\begin{split}h_{top}(f)=\lim_{r\to 0}\limsup_{m,n \to \infty}\frac{1}{m+n}\log r_{d}(f,-n,m,r)
=\lim_{r\to 0}\liminf_{m,n \to \infty}\frac{1}{m+n}\log r_{d}(f,-n,m,r),
	\end{split}
\end{equation*}
where the corresponding notions are defined in Definition \ref{def 2.1}.
\end{proposition}

\begin{proof}
The first result  was shown in \cite{BK83,You82}. We show the equations for topological entropy.  Fix $r$ sufficiently small. Let $E$ be an $(n+m,r)$spanning set of $\mathcal X$ with the 
smallest cardinality. That is $\mathcal X=\bigcup_{x\in E}B_d(x,n+m,r)$. Since $f$ is a homeomorphism, then
$B_d(x,n+m,r)=f^{-n}B_d(f^nx,-n,m,r)$. Notice that $E$ is a finite set. One has
\begin{align*}
\mathcal X=\bigcup_{x\in E}B_d(f^nx,-n,m,r).
\end{align*}
Set $F=f^n E$. Then $r_d(f,-n,m,r)\leq r_d(f,n+m,r)$. Moreover, by considering negative orbits, we have $r_d(f,-n,m,r)\leq r_d(f^{-1},n+m,r)$. Combining with the fact that $h_{top}(f)=h_{top}(f^{-1})$, we finish the proof.
\end{proof}

By invoking Corollary \ref{cor 2.11} and Proposition \ref{prop 3.4},  one can readily confirm that the  function $\rho$ and metrics $D,\tilde{d}$ share the same topological entropy and Brin-Katok local entropy.
\begin{theorem}\label{thm 3.5}
	Let $f$ be an expansive homeomorphism of $\mathcal X$ and $\tilde{d}$ be the metric  obtained in Theorem \ref{thm 1.1}. For every $\mu\in \mathcal M(f)$, one has
	$$d_{\mu}(x,\tilde{d})=h_{\mu}(f,x)\left(\frac{1}{\log a}+\frac{1}{\log b}\right)$$
	for	$\mu$-a.e. $x\in \mathcal X$.
	If $\mu\in{\mathcal E}(f)$, then
	$$\dim_H(\mu,\tilde{d})=\dim_B(\mu,\tilde{d})=h_{\mu}(f)\left(\frac{1}{\log a}+\frac{1}{\log b}\right).$$

\end{theorem}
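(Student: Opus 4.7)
The plan is to express small open $\tilde d$-balls as two-sided Bowen balls and thereby convert the pointwise dimension into a Brin--Katok local entropy. Corollary \ref{cor 2.12} yields $\tfrac14\tilde d\le\rho\le 4\tilde d$, hence $B_\rho(x,r/4)\subseteq B_{\tilde d}(x,r)\subseteq B_\rho(x,4r)$, so the additive $\log 4$ terms are negligible after division by $\log r$; thus $\underline d_\mu(x,\tilde d)=\underline d_\mu(x,\rho)$ and $\overline d_\mu(x,\tilde d)=\overline d_\mu(x,\rho)$, and I work with $\rho$ throughout.

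Fix $r_1\in(0,1)$. By Lemma \ref{lem 3.3}, for every sufficiently small $r\in(0,r_1)$ there exist integers $n(r),m(r)\ge 0$ with $n(r),m(r)\to\infty$ as $r\to 0$, such that $B_\rho(x,r)=B_\rho(x,-n(r),m(r),r_1)$ and $(n(r)+m(r))/\log(1/r)\to C:=\frac{1}{\log a}+\frac{1}{\log b}$. Factorize
\begin{equation*}
\frac{-\log\mu(B_\rho(x,r))}{\log(1/r)}=\frac{-\log\mu(B_\rho(x,-n(r),m(r),r_1))}{n(r)+m(r)}\cdot\frac{n(r)+m(r)}{\log(1/r)}.
\end{equation*}
Since the second factor converges to the positive constant $C$, setting $A(r,r_1):=-\log\mu(B_\rho(x,-n(r),m(r),r_1))/(n(r)+m(r))$ gives, for every admissible $r_1$,
\begin{equation*}
\underline d_\mu(x,\rho)=C\cdot\liminf_{r\to 0}A(r,r_1),\qquad \overline d_\mu(x,\rho)=C\cdot\limsup_{r\to 0}A(r,r_1).
\end{equation*}
In particular these two $r$-limits do not depend on $r_1$.

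The crux is to sandwich the right-hand sides. Since $(n(r),m(r))$ traces through a subfamily of pairs with $n+m\to\infty$,
\begin{equation*}
\liminf_{n,m\to\infty}-\frac{\log\mu(B_\rho(x,-n,m,r_1))}{n+m}\le\liminf_{r\to 0}A(r,r_1)\le\limsup_{r\to 0}A(r,r_1)\le\limsup_{n,m\to\infty}-\frac{\log\mu(B_\rho(x,-n,m,r_1))}{n+m}.
\end{equation*}
The outer terms depend on $r_1$ while the middle ones do not; letting $r_1\to 0$, the two-sided Brin--Katok formula (\ref{equ 3.4}) (applicable to $\rho$ since entropy is metric-independent and $\rho,\tilde d$ are equivalent) forces both outer expressions to tend to $h_\mu(f,x)$ for $\mu$-a.e.\ $x$. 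Hence
\begin{equation*}
\underline d_\mu(x,\rho)=\overline d_\mu(x,\rho)=C\cdot h_\mu(f,x),\qquad \mu\text{-a.e.\ }x\in\mathcal X,
\end{equation*}
which proves the first claim. For ergodic $\mu$ one has $h_\mu(f,x)=h_\mu(f)$ almost surely, so $d_\mu(x,\tilde d)=Ch_\mu(f)$ is constant on a full-measure set, and Lemma \ref{lem 2.4}(2) delivers $\dim_H(\mu,\tilde d)=\dim_B(\mu,\tilde d)=Ch_\mu(f)$.

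The main obstacle is the limit interchange above: the parametrized family $(n(r),m(r))$ explores only a sparse, rigidly coupled subset of pairs $(n,m)$, so a priori $\liminf_{r\to 0}A(r,r_1)$ could strictly exceed $\liminf_{n,m\to\infty}$ for each fixed $r_1$. The sandwich nonetheless closes because (\ref{equ 3.4}) pins both $\liminf_{n,m\to\infty}$ and $\limsup_{n,m\to\infty}$ to the single value $h_\mu(f,x)$ after sending $r_1\to 0$, simultaneously squeezing $\underline d_\mu$ and $\overline d_\mu$.
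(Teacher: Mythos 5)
Your proof is correct and follows essentially the same route as the paper: equivalence of $\rho$ and $\tilde d$, Lemma~\ref{lem 3.3} to turn $\rho$-balls into two-sided $\rho$-Bowen balls, the $(n+m)/\log(1/r)\to C$ factorization, and then the two-sided Brin--Katok formula in the $r_1\to 0$ limit. The one place you improve on the paper's exposition is the sandwich step: the paper writes $\limsup_{r\to 0}(\cdots)=\limsup_{m,n\to\infty}(\cdots)$ as an equality, whereas (as you correctly observe) passing from the rigidly coupled subfamily $(n(r),m(r))$ to all $(m,n)$ only gives $\le$ for the limsup and $\ge$ for the liminf; your explicit two-sided squeeze, together with the monotonicity in $r_1$ and equation~(\ref{equ 3.4}), closes this gap cleanly and is in fact the more careful rendering of the same argument.
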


\begin{proof}
	Let $\mu\in \mathcal M(f)$.	We first show that
	for $\mu$-a.e. $x\in \mathcal X$,
	\begin{align*}
		\overline{d}_{\mu}(x,\tilde{d})
		\leq  h_\mu(f,x)\left(\frac{1}{\log a}+\frac{1}{\log b}\right).
	\end{align*}
	For every $x\in \mathcal X$ and $r>0$, by Corollary \ref{cor 2.11}, we notice that
	\begin{align}\label{equ 3.13}
		B_{\tilde{d}}(x,\frac{1}{4}r)\subseteq B_{\rho}(x,r)\subseteq B_{\tilde{d}}(x,4r)
	\end{align}
	By the definition of pointwise dimension, we have
	$$\limsup_{r\to 0}\frac{\log\mu(B_{\tilde{d}}(x,r))}{\log r}=\limsup_{r\to 0}\frac{\log\mu(B_{\rho}(x,r))}{\log r},$$
	which is also valid if we replace $\limsup$ by $\liminf$.
	Fix  $r_{1}\in (0,1)$. For every small enough $r\in (0,r_1)$, by Lemma \ref{lem 3.3}, we obtain that
	\begin{align*}
		\limsup_{r \to 0}\frac{\log\mu(B_{\rho}(x,r))}{\log r}
		& =\limsup_{r \to 0} \frac{\log\mu(B_{\rho}(x,-n,m,r_1))}{m+n}\frac{m+n}
		{\log r}\\
		& =  \limsup_{m,n \to \infty}\frac{\log\mu(B_{\rho}(x,-n,m,r_1))}{-(m+n)}\lim_{r \to 0}\frac{m+n}
		{\log \frac{1}{r}}\\
		& = \left(\frac{1}{\log a}+\frac{1}{\log b}\right)\limsup_{m,n \to \infty}\frac{\log\mu(B_{\rho}(x,-n,m,r_1))}{-(m+n)}\\
		& \leq (\frac{1}{\log a}+\frac{1}{\log
			b})\limsup_{m,n \to \infty}\frac{\log\mu(B_{\tilde{d}}(x,-n,m,\frac{1}{4}r_1))}{-(m+n)}\\
		&\leq h_{\mu}(f,x)(\frac{1}{\log a}+\frac{1}{\log
			b}).
	\end{align*}
	
	To derive the converse inequality, we have
	\begin{align*}
		\liminf_{r \to 0}\frac{\log\mu(B_{\rho}(x,r))}{\log r}
		& =\liminf_{r \to 0} \frac{\log\mu(B_{\rho}(x,-n,m,r_1))}{-(m+n)}\frac{m+n}
		{\log \frac{1}{r}}\\
		& =  \liminf_{m,n \to \infty}\frac{\log\mu(B_{\rho}(x,-n,m,r_1))}{-(m+n)}\lim_{r \to 0}\frac{m+n}
		{\log \frac{1}{r}}\\
		& =  \left(\frac{1}{\log a}+\frac{1}{\log b}\right)\liminf_{m,n \to \infty}\frac{\log\mu(B_{\rho}(x,-n,m,r_1))}{-(m+n)}\\
		& \geq  (\frac{1}{\log a}+\frac{1}{\log
			b})\liminf_{m,n \to \infty}\frac{\log\mu(B_{\tilde{d}}(x,-n,m,4r_1))}{-(m+n)}.
	\end{align*}
	From the arbitrariness of $r_1$  and for $\mu$-a.e. $x\in \mathcal X$, we get that
	\begin{align*}
		\underline{d}_{\mu}(x,\tilde{d})\geq h_\mu(f,x)   \left(\frac{1}{\log a}+\frac{1}{\log b}\right).
	\end{align*}
	Hence, for $\mu$-a.e. $x\in \mathcal X$, we have	$$d_{\mu}(x,\tilde{d})=   h_\mu(f,x)\left(\frac{1}{\log a}+\frac{1}{\log b}\right).$$
	
	If $\mu \in \mathcal E(f)$, noticing that $h_{\mu}(f)=h_\mu(f,x), \mu\text{-a.e.~}x\in \mathcal X$, and	
	combining with Lemma \ref{lem 2.3}, we obtain
	$$d_{\mu}(x,\tilde{d})=h_{\mu}(f)\left(\frac{1}{\log a}+\frac{1}{\log b}\right)=\dim_H(\mu,\tilde{d})=\dim_B(\mu,\tilde{d})$$ for $\mu$-a.e. $x\in \mathcal X.$ The proof is thus complete.
\end{proof}

Furthermore, we can describe the precise relation between topological entropy and the Hausdorff and Box dimension.
\begin{theorem}\label{thm 3.6}
	Let $f$ be an expansive homeomorphism of $\mathcal X$ and $\tilde{d}$ be the metric  obtained in Theorem \ref{thm 1.1}. Then we have
	$$\dim_H(\mathcal X,\tilde{d})=\dim_B(\mathcal X,\tilde{d})=h_{top}(f)\left(\frac{1}{\log a}+\frac{1}{\log b}\right).$$
\end{theorem}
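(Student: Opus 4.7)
The plan is to sandwich $\dim_H(\mathcal X,\tilde{d})$, $\underline{\dim}_B(\mathcal X,\tilde{d})$, and $\overline{\dim}_B(\mathcal X,\tilde{d})$ between $\bigl(\tfrac{1}{\log a}+\tfrac{1}{\log b}\bigr)h_{top}(f)$ from above and below. By Lemma \ref{lem 2.4}(1) one has $\dim_H(\mathcal X,\tilde{d})\le\underline{\dim}_B(\mathcal X,\tilde{d})\le\overline{\dim}_B(\mathcal X,\tilde{d})$, so it suffices to establish the upper box bound and the lower Hausdorff bound.

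For the upper bound I would mimic the covering computation used in the proof of Theorem \ref{thm 3.5}. Fix a small $r_1\in(0,1)$ and, for $r\in(0,r_1)$, set $n=q(r)-q(r_1)$, $m=p(r)-p(r_1)$ as in Lemma \ref{lem 3.3}. By that lemma every $\rho$-ball $B_\rho(x,r)$ coincides with the two-sided Bowen ball $B_\rho(x,-n,m,r_1)$, so any $(-n,m,r_1)$-spanning set for $\rho$ induces a covering of $\mathcal X$ by $\rho$-balls of radius $r$. Using the bi-Lipschitz comparison $\tfrac14\tilde{d}\le\rho\le 4\tilde{d}$ from Corollary \ref{cor 2.12} to pass between $\rho$- and $\tilde{d}$-covering numbers produces only an additive $O(1)$ shift of $\log r$, yielding
\[
\frac{\log N(\mathcal X,\tilde{d},r)}{\log(1/r)}\le\frac{\log r_{\tilde{d}}(f,-n,m,r_1/4)}{n+m}\cdot\frac{n+m}{\log(1/r)}+o(1).
\]
Taking $\limsup_{r\to 0}$ (so $n,m\to\infty$ and $\tfrac{n+m}{\log(1/r)}\to\tfrac{1}{\log a}+\tfrac{1}{\log b}$ by Lemma \ref{lem 3.3}(3)) and then letting $r_1\to 0$, and invoking Remark \ref{rem 2.2} together with the metric-independence of topological entropy, I conclude
\[
\overline{\dim}_B(\mathcal X,\tilde{d})\le\Bigl(\tfrac{1}{\log a}+\tfrac{1}{\log b}\Bigr)h_{top}(f).
\]

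For the lower bound, I would apply the variational principle together with ergodic decomposition to write $h_{top}(f)=\sup_{\mu\in\mathcal E(f)}h_\mu(f)$. For each $\mu\in\mathcal E(f)$, Theorem \ref{thm 3.5} gives $\dim_H(\mu,\tilde{d})=\bigl(\tfrac{1}{\log a}+\tfrac{1}{\log b}\bigr)h_\mu(f)$, and by the monotonicity of Hausdorff dimension $\dim_H(\mathcal X,\tilde{d})\ge\dim_H(\mu,\tilde{d})$. Taking the supremum over $\mu\in\mathcal E(f)$ yields $\dim_H(\mathcal X,\tilde{d})\ge\bigl(\tfrac{1}{\log a}+\tfrac{1}{\log b}\bigr)h_{top}(f)$, and combining this with the upper bound and Lemma \ref{lem 2.4}(1) forces all three dimension quantities to coincide, proving the theorem.

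The only delicate point in this plan is the upper bound: one must verify that the bi-Lipschitz shift between $\rho$ and $\tilde{d}$ and the discretization errors coming from the choices $b^{-p(r)}<r\le b^{-p(r)+1}$, $a^{-q(r)}<r\le a^{-q(r)+1}$ contribute only $o(1)$ to $\log N(\mathcal X,\tilde{d},r)/\log(1/r)$ and therefore vanish in the limit. Once this bookkeeping is in place, the argument reduces to a direct combination of Lemma \ref{lem 3.3}, Theorem \ref{thm 3.5}, and the standard variational principle, closely paralleling the measure-theoretic case already proved.
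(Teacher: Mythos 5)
Your proposal matches the paper's proof essentially line for line: the lower bound comes from Theorem \ref{thm 3.5} applied to ergodic measures combined with the variational principle and monotonicity of Hausdorff dimension, and the upper bound comes from the covering comparison via Lemma \ref{lem 3.3}, the bi-Lipschitz equivalence of $\rho$ and $\tilde{d}$ from Corollary \ref{cor 2.12}, and Remark \ref{rem 2.2}, finished off with Lemma \ref{lem 2.4}(1). The bookkeeping point you flag (bounded multiplicative shifts in radius contributing only $o(1)$ to $\log N / \log(1/r)$) is exactly the implicit step the paper also relies on, so there is no gap.
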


\begin{proof}
	By Theorem \ref{thm 3.5}, we have
	\begin{align*}
		\dim_H(\mathcal X,\tilde{d}) \geq  \sup_{\mu\in {\mathcal E}(f)} \dim_H(\mu,\tilde{d})& \geq  \sup_{\mu\in {\mathcal E}(f)} \left\{h_{\mu}(f)\left(\frac{1}{\log a}+\frac{1}{\log b}\right)\right\}\\
		& =h_{top}(f)\left(\frac{1}{\log a}+\frac{1}{\log b}\right).
	\end{align*}
	Similarly, since $\frac{1}{4}\tilde{d}(x,y)\leq \rho(x,y)\leq 4\tilde{d}(x,y)$,
	we notice that the  function $\rho$ and the metric $\tilde{d}$ do not have any effect on the computation of dimension  and topological entropy.
	
	Let $N(f,\rho,r)$ denote the minimum number of open balls with radius $r$ in terms of the function $\rho$
	that
	covers $\mathcal X$.
	For sufficiently small $0<r<r_1<1$, by Lemma \ref{lem 3.3}, there exist two positive integers $m,n$ such that
	\begin{itemize}
		\item[(1)]$m\rightarrow\infty$, $n\rightarrow\infty$, as $r\rightarrow0$;
		
		\item[(2)] for any $x\in \mathcal X$, $B_{\rho}(x,r)=B_{\rho}(x,-n,m,r_1)$.
	\end{itemize}
	
	Let $E$ be an  $(n+m,r_1)$-spanning set of $\mathcal{X}$ with
	$r_{\tilde{d}}(f,n+m,r_1)=|E|$. Since $f$ is a homeomorphism, if taking images by $f^{-n}$, 
	we get  $N(\mathcal X,\rho,4r)\leq  r_{m+n}(f,\tilde{d},r_1)$ by Proposition \ref{prop 3.4}.
	It follows that
	$$\lim_{r\rightarrow 0}\limsup_{n,m\to \infty}\frac{\log N(\mathcal X,\rho,4r)}{m+n} \leq h_{top}(f).$$
	By Lemma \ref{lem 3.3} again, we have
	\begin{align*}
		\overline{\dim}_B(\mathcal X,\tilde{d})& = \limsup_{r\to 0}\frac{\log N(\mathcal X,\rho,4r)}{\log
			\frac{1}{4r}}\\
		& \leq  \limsup_{m,n\to \infty}\frac{\log r_{m+n}(f,\tilde{d},r_1)}{m+n}\lim_{r\to 0}\frac{m+n}{\log \frac{1}{4r}}\\
		& \leq  h_{top}(f)\left(\frac{1}{\log a}+\frac{1}{\log b}\right).
	\end{align*}
	Combining with Lemma \ref{lem 2.3}, we have
	$$\dim_{H}(\mathcal X,\tilde{d})=\overline{\dim}_B(\mathcal X,\tilde{d})=\underline{\dim}_B(\mathcal X,\tilde{d})=h_{top}(f)\left(\frac{1}{\log a}+\frac{1}{\log b}\right),$$
	which finishes the proof.
\end{proof}

\begin{proof}[Proof of Theorem \ref{thm 1.2}]
	It directly follows  by  Theorems \ref{thm 3.5} and \ref{thm 3.6}.
\end{proof}

\section{Variational principles for $r$-neutralized entropy}\label{Section 4}
In this section, we compute the $r$-neutralized entropy of an expansive homeomorphism, and establish  variational principles in terms of the hyperbolic metric $\tilde{d}$.

\subsection{Several notations of $r$-neutralized entropies}
Through the section, let $\tilde{d}$ be the metric obtained in Theorem \ref{thm 1.1}. Contrary to the classical definition of entropy, in our investigation of neutralized entropy, we concentrate on its dependence on $r$.

\begin{enumerate}
	\item \emph{Upper and lower $r$-neutralized topological entropy}:
	\begin{align*}
		\overline{h}_{top,\tilde{d}	}^{r}(f):=&\limsup_{m,n\to\infty}\frac{1}{m+n}\log r_{\tilde{d}}(f,-n,m,e^{-(m+n)r}),\\
		\underline{h}_{top,\tilde{d}}^{r}(f):=&\liminf_{m,n\to\infty}\frac{1}{m+n}\log r_{\tilde{d}}(f,-n,m,e^{-(m+n)r}),
	\end{align*}
	where  $r_{\tilde{d}}(f,-n,m,e^{-(m+n)r})=\min \left\{\# E\colon \mathcal X=\bigcup_{x\in E} B_{\tilde{d}}(x,-n,m,e^{-(m+n)}r)\right\}.$
	
	\item \emph{Upper and lower $r$-neutralized Katok entropy of $\mu\in \mathcal M(f)$ and $\delta\in (0,1)$}:
	\begin{align*}
		\overline{h}_{\mu,\tilde{d}}^{K,r}(f,\delta):=&\limsup_{m,n\to\infty}\frac{1}{m+n}\log r_{\tilde{d}}(f,\mu,\delta,-n,m,e^{-(m+n)r}),\\
		\underline{h}_{\mu,\tilde{d}}^{K,r}(f,\delta):=&\liminf_{m,n\to\infty}\frac{1}{m+n}\log r_{\tilde{d}}(f,\mu,\delta,-n,m,e^{-(m+n)r}),\end{align*}
	where  
	\begin{align*}
		r_{\tilde{d}}(f,\mu,-n,m,e^{-(m+n)r},\delta)=\min \left\{\# K\colon \mu(\bigcup_{x\in K} B_{-n}^{m}(x,\tilde{d},e^{-(m+n)r})  )\geq 1-\delta\right\}.
	\end{align*}
	
	\item \emph{Upper and lower $r$-neutralized Brin-Katok local entropy of $\mu\in \mathcal M(f)$}:
	\begin{align*}
		\overline{h}_{\mu,\tilde{d}}^{BK,r}(f):=&\int \overline{h}_{\mu,\tilde{d}}^{BK,r}(f,x) d\mu(x)\\
		=&\int\limsup_{m,n\to\infty}-\frac{1}{m+n}\log \mu \left(	B_{\tilde{d}}(x,-n,m,e^{-(m+n)}r)\right)d\mu(x),\\  \underline{h}_{\mu,\tilde{d}}^{BK,r}(f):=&\int \underline{h}_{\mu,\tilde{d}}^{BK,r}(f,x)d\mu(x)\\
		=&\int\liminf_{m,n\to\infty}-\frac{1}{m+n}\log \mu \left(	B_{\tilde{d}}(x,-n,m,e^{-(m+n)}r)\right)d\mu(x).
	\end{align*}	
\end{enumerate}
Simmilar with \cite[(1)]{OR24}, one can show that for $\mu$-a.e. $x\in \mathcal X$, $\overline{h}_{\mu,\tilde{d}}^{BK,r}(f,x)$ and $\underline{h}_{\mu,\tilde{d}}^{BK,r}(f,x)$ are $f$-invariant. Moreover, if $\mu$ is ergodic, then
$\overline{h}_{\mu,\tilde{d}}^{BK,r}(f,x)=\overline{h}_{\mu,\tilde{d}}^{BK,r}(f)$ and $\underline{h}_{\mu,\tilde{d}}^{BK,r}(f,x)=\underline{h}_{\mu,\tilde{d}}^{BK,r}(f)$ for $\mu$-a.e. $x\in \mathcal X$.

If the  upper and the  lower $r$-neutralized topological entropy (or Brin-Katok, Katok entropy) are equal, we denote their common value by $h_{top,\tilde{d}}^{r}(f)$ (or $h_{\mu,\tilde{d}}^{BK,r}(f)$,$h_{\mu,\tilde{d}}^{K,r}(f,\delta)$), respectively.

\begin{remark}
	Different from classic Brin-Katok local entropy, Katok entropy and topological entropy, the $r$-neutralized entropy quantities are dependent of the metric compatible with topology on $\mathcal{X}$. See \cite[Proposition A.1]{DQ25} or Section \ref{Section 6} for the dependence of $r$. 
\end{remark}

We introduce a direct comparison of the three entropy quantities.
\begin{lemma}[\cite{DQ25}]\label{lem 4.2}
	Let	$(\mathcal X,f)$ be a TDS and $\mu \in \mathcal E (f)$. Then for any $r>0$ and $\delta\in (0,1)$,
	\begin{align*}
		\underline{h}_{top,\tilde{d}}^{r}(f)\geq \underline{h}_{\mu,\tilde{d}}^{K,r}(f,\delta)\geq \underline{h}_{\mu,\tilde{d}}^{BK,r}(f).
	\end{align*}
	In addition, the first inequality holds for any $\mu\in \mathcal M(f)$.
\end{lemma}

Given $m,n\in \mathbb N$ and $r>0$.
The proof of the subsequent result follows a similar argument to  Lemma \ref{lem 3.2}, since $e^{-(n+m)r} \in (0,1)$.

\begin{lemma}\label{lem 4.3}
	For every $x\in \mathcal X$, $r>0$ and for any positive integers $m,n$, we have
	$$B_{\rho}(x,-n,m,e^{-(n+m)r})= C_{-q(e^{-(n+m)r})-n}^{p(e^{-(n+m)r})+m}(x).$$
\end{lemma}

Using this conclusion, we can state the following result.
\begin{lemma}\label{lem 4.4}
	Let   $r_1\in (0,1)$ and $r_{2}\in (0,\frac{3}{\frac{1}{\log a}+\frac{1}{\log b}})$. Then  for any small enough $r<\min\{r_1,e^{-2r_2}\}$, there exist four positive integers $m_{1}:=m_{1}(r,r_{1}),n_{1}:=n_{1}(r,r_{1})$ and  $m_{2}:=m_{2}(r,r_{2}),n_{2}:=n_{2}(r,r_{2})$  satisfying the following conditions: for any $x\in \mathcal X$,
	\begin{enumerate}
		\item $B_{\rho}(x,-(n_2+2),m_2,e^{-(n_2+m_2+2)r_2})
		\subseteq B_{\rho}(x,r)
		\subseteq B_{\rho}(x,-(n_2-2),m_2,e^{-(n_2+m_2-2)r_2})$;
		
		\item  $
		B_{\rho}(x,r)=B_{\rho}(x,-n_1,m_1,r_{1});$

		\item $n_1,n_2,m_1,m_2\rightarrow\infty$, as $r\rightarrow0$.
	\end{enumerate}
	Moreover,
	\begin{equation*}
		\begin{split}
			\lim_{r \to 0}\frac{m_1+n_1}{-\log r}=\frac{1}{\log a}+\frac{1}{\log b},\quad 	\lim_{r \to 0}\frac{m_2+n_2}{-\log r}=\frac{\frac{1}{\log a}+\frac{1}{\log b}}{1+\frac{r_2}{\log a}+\frac{r_2}{\log b}} .
		\end{split}
	\end{equation*}
\end{lemma}

\begin{proof}
	For the sake of simplicity, we set $k=\frac{1}{\log a}+\frac{1}{\log b}$.
	
	(1) 
	The existence of $m_1,n_1$ is directly from Lemma \ref{lem 3.3}.
	We only need to show the existence of $m_2,n_2$. A key point is whether  the  equations
	\begin{align}\label{equ 4.14}
		\left\{\begin{array}{cc}
			m=p(r)-p(e^{-(m+n)r_{2}})\\
			n=q(r)-q(e^{-(m+n)r_{2}})
		\end{array}
		\right.
	\end{align}
	have positive integer solutions $m_2,n_2$.
	Put $h=m+n$.
	By definition we have
	\begin{align*}
		p(e^{-hr_{2}})-1\leq \frac{hr_{2}}{\log b}< p(e^{-hr_{2}}) \quad \text{and} \quad  p(r)-1\leq \frac{\log\frac{1}{r}}{\log b}<p(r)
	\end{align*}
	and
	\begin{align*}
		q(e^{-hr_{2}})-1\leq \frac{hr_{2}}{\log a}< q(e^{-hr_{2}}) \quad \text{and} \quad  q(r)-1\leq \frac{\log\frac{1}{r}}{\log a}< q(r).
	\end{align*}
	This gives that
	\begin{align}\label{equ 4.135}
		-2+k\log \frac{1}{r}-hr_2k
		<
		p(r)-p(e^{-hr_{2}})+q(r)-q(e^{-hr_{2}})
		<2+k\log \frac{1}{r}-hr_2k.
	\end{align}
	Consider the solution of the inequality
	\begin{align}\label{equ 4.16}
		-2+k\log \frac{1}{r}-kr_2h<h<2+k\log \frac{1}{r}-kr_2h.
	\end{align}
	We obtain that $h\in \left(\frac{k\log \frac{1}{r}-2}{1+kr_2},\frac{k\log \frac{1}{r}+2}{1+kr_2}\right)$.
	In light of $r_2\in (0,\frac{3}{k})$ and $r$ is sufficiently small, then
	the equation \eqref{equ 4.16} has a positive integer solution $h$. Moreover, since
	$0<r<\min\{r_1,e^{-2r_2}\}$, then it implies that $r<e^{-hr_{2}}.$
	
	Fix such $h$ in \eqref{equ 4.16} and $r,r_2$. Based on (\ref{equ 4.135}),  we notice that
	$p(r)-p(e^{-hr_{2}})+q(r)-p(e^{-hr_{2}})$ can only take up to $3$ integers and the absolute value of their difference from $h$ does not exceed $2$. Namely,
	\begin{align}\label{equ 4.137}
		h-2\leq p(r)-p(e^{-hr_{2}})+q(r)-q(e^{-hr_{2}})\leq  h+2.
	\end{align}
	For each integer $h$  satisfying (\ref{equ 4.16}), there exists an integer $j\in [-2,2]$ such that $$p(r)-p(e^{-hr_{2}})+q(r)-q(e^{-hr_{2}})=h+j.$$
	Take $m_2=p(r)-p(e^{-hr_{2}})$ and $n_2+j=q(r)-q(e^{-hr_{2}})$. Then $m_2,n_2+j\in \mathbb N$ since $r<e^{-hr_{2}}$.
	This suggests that for any given $0<r_{2}<\frac{3}{k}$,
	there exists  an unique integer $j\in [-2,2]$ such that
	the  equations
	\begin{align}\label{equ 4.138}
		\left\{\begin{array}{cc}
			m=p(r)-p(e^{-(m+n)r_{2}})\\
			n+j=q(r)-q(e^{-(m+n)r_{2}})
		\end{array}
		\right.
	\end{align}
	have positive integer solutions $m_2,n_2$.
	
	Since $h\in \left(\frac{k\log \frac{1}{r}-2}{1+kr_2},\frac{k\log \frac{1}{r}+2}{1+kr_2}\right)$ and $k,r_2$ are  constants independent of $r$ and $c \to \infty$ as $r\to 0$, we have $h\to\infty$  as $r\to 0$. It indicates that $m_2,n_2 \to \infty$  as $r\to 0$.
	By taking $r$ sufficiently small, we can ensure that  $m_2,n_2$ are positive integers.

	By virtue of $B_{\rho}(x,r)=C^{p(r)}_{-q(r)}(x)$ and
	\begin{align*}
		C_{-(q(e^{-(n_2+m_2+2)r_2})+n_2+2)}^{p(e^{-(n_2+m_2+2)r_2})+m_2}(x)
		\subseteq& C^{p(r)}_{-q(r)}(x)=C^{m_2+p(e^{-(m_2+n_2)r_{2}})}_{-n-j-q(e^{-(m_2+n_2)r_{2}})}(x)\\
		\subseteq& C_{-(q(e^{-(n_2+m_2-2)r_2})+n_2-2)}^{p(e^{-(n_2+m_2-2)r_2})+m_2}(x).
	\end{align*}
	for any $j\in [-2,2]$,
	using Lemma \ref{lem 4.3},
	we obtain that
	$$B_{\rho}(x,-(n_2+2),m_2,e^{-(n_2+m_2+2)r_2})\subseteq B_{\rho}(x,r)\subseteq B_{\rho}(x,-(n_2-2),m_2,e^{-(n_2+m_2-2)r_2}).$$
	This completes the proof of (1).
	
	(2) and (3) are  direct results of (\ref{equ 4.138}) and Lemma \ref{lem 3.3}.

	Since $m_2,n_2 \to \infty$ as $r \to 0$, we have
	\begin{align*}
		\lim_{r \to 0}\frac{p(r)+q(r)-m_2-n_2}{m_2+n_2}=\lim_{m_2,n_2 \to \infty}\frac{p(e^{-(n_2+m_2)r_{2}})+q(e^{-(n_2+m_2)r_{2}})}{m_2+n_2}
		=r_{2}k.
	\end{align*}
	Hence,
	$$\lim_{r \to 0}\frac{m_2+n_2}{-\log r}=\lim_{r \to 0}\frac{m_2+n_2}{p(r)+q(r)} \lim_{r \to 0}\frac{p(r)+q(r)}{-\log r}=\frac{k}{1+r_{2}k},$$
	which completes the proof.
\end{proof}

\subsection{Proof of Theorem \ref{thm 1.3}}
By Corollary \ref{cor 2.11}, we can also  approximate $r$-neutralized  entropy  with respect to the hyperbolic metric $\tilde{d}$  by using the function $\rho$. The following result elucidates the relationship between $r$-neutralized entropy and dimensions, bearing a resemblance to the forms presented in \cite{DQ25,You82} for diffeomorphisms..
\begin{proposition}\label{prop 4.4}
	Let $f$ be an expansive homeomorphism of $\mathcal X$. Given any $0<r_2<\frac{3}{\frac{1}{\log a}+\frac{1}{\log b}}$, then for every  $\mu \in \mathcal M(f)$ and  $\mu$-a.e. $x\in \mathcal X,$
	\begin{align*}
		d_{\mu}(x,\tilde{d})=\overline{h}_{\mu,\tilde{d}}^{BK,r_2}(f,x)\left(\frac{1}{r+\frac{1}{\frac{1}{\log a}+\frac{1}{\log b}}}\right)
		=\underline{h}_{\mu,\tilde{d}}^{BK,r_2}(f,x)\left(\frac{1}{r+\frac{1}{\frac{1}{\log a}+\frac{1}{\log b}}}\right).
	\end{align*}
	If $\mu\in \mathcal E(f)$, then
	\begin{align*}
		\dim_{H}(\mu,\tilde{d})=\overline{h}_{\mu,\tilde{d}}^{BK,r_2}(f)\left(\frac{1}{r+\frac{1}{\frac{1}{\log a}+\frac{1}{\log b}}}\right)
		=\underline{h}_{\mu,\tilde{d}}^{BK,r_2}(f)\left(\frac{1}{r+\frac{1}{\frac{1}{\log a}+\frac{1}{\log b}}}\right).
	\end{align*}
\end{proposition}
\begin{proof}
	
	Notice that for $\mu$-a.e. $x\in \mathcal X,$
	\begin{align}\label{equ 4.19}	
		d_{\mu}(x,\tilde{d})=h_{\mu}(f,x)\left(\frac{1}{\log a}+\frac{1}{\log b}\right)
	\end{align}	
	by Theorem \ref{thm 3.5}.
	Given small enough $r_1>0$, by Lemma \ref{lem 4.3},  there exist four positive integers $m_1,n_1,m_2,n_2$ such that for any $x\in \mathcal X,$
	\begin{align*}
		B_{\rho}(x,-(n_2+2),m_2,e^{-(n_2+m_2+2)r_2})\subseteq& B_{\rho}(x,-n_1,m_1,r_{1})\\
		\subseteq& B_{\rho}(x,-(n_2-2),m_2,e^{-(n_2+m_2-2)r_2}).
	\end{align*}
	Take $m'_2=m_2+[\frac{\log 2}{r_2}]+1$,
	$n'_2=n_2+[\frac{\log 2}{r_2}]+1$ such that $\frac{1}{4}e^{-(m_2+n_2+2)r_2}\geq e^{-(m_2'+n'_2+2)r_2}$ and
	take $m''_2=m_2-[\frac{\log 2}{r_2}]$,
	$n''_2=n_2-[\frac{\log 2}{r_2}]$ such that $e^{-(m_2''+n_2''-2)r_2}\geq 4e^{-(m_2+n_2-2)r_2}$.
	
Obviously,
	\begin{equation*}
		\begin{split}
			\underline{h}_{\mu,\tilde{d}}^{BK,r_2}(f,x)=& \liminf_{m_2',n_2' \to \infty}\frac{\log\mu(B_{\tilde{d}}(x,-(n_2'+2),m_2',e^{-(m_2'+n'_2+2)r_2}))}{-(m'_2+n_2'+2)}\\
			\geq & \liminf_{n_2,m_2 \to \infty}\frac{\log\mu(	B_{\tilde{d}}(x,-(n_2+2),m_2,\frac{1}{4}e^{-(m_2+n_2+2)r_2}))}{-(m_2+n_2+2)}\\
			\geq & \liminf_{n_2,m_2 \to \infty}\frac{\log\mu(	B_{\rho}(x,-(n_2+2),m_2,e^{-(m_2+n_2+2)r_2}))}{-(m_2+n_2+2)}\\
			\geq &  \liminf_{n_1,m_1 \to \infty}\frac{\log\mu(	B_{\rho}(x,-n_1,m_1,r_{1}))}{-(n_1+m_1)}\lim_{r\to 0}\frac{n_1+m_1}{\log \frac{1}{r}}\cdot \frac{\log \frac{1}{r}}{n_2+m_2+2}
			\\
			\geq&  \liminf_{n_1,m_1 \to \infty}\frac{\log\mu(	B_{\tilde{d}}(x,-n_1,m_1,4r_{1}))}{-(n_1+m_1)}\left(1+r_2\left(\frac{1}{\log a}+\frac{1}{\log b}\right)\right)
		\end{split}
	\end{equation*}
	and
	\begin{equation*}
		\begin{split}
			\overline{h}_{\mu,\tilde{d}}^{BK,r_2}(f,x)=& \limsup_{n_2'',m_2'' \to \infty}\frac{\log\mu(B_{\tilde{d}}(x,-(n_2''-2),m_2'',e^{-(m_2''+n_2''-2)r_2}))}{-(n_2''+m_2''-2)}\\
			\leq & \limsup_{n_2,m_2 \to \infty}\frac{\log\mu(B_{\tilde{d}}(x,-(n_2-2),m_2,4e^{-(m_2+n_2-2)r_2}))}{-(n_2+m_2-2)}\\
			\leq & \limsup_{n_2,m_2 \to \infty}\frac{\log\mu(B_{\rho}(x,-(n_2-2),m_2,e^{-(n_2+m_2-2)r_2}))}{-(n_2+m_2-2)}\\
			\leq &  \limsup_{n_1,m_1 \to \infty}\frac{\log\mu(	B_{\rho}(x,-n_1,m_1,r_{1}))}{-(n_1+m_1)}\lim_{r \to 0}\frac{n_1+m_1+1}{\log \frac{1}{r}}\cdot \frac{\log \frac{1}{r}}{n_2+m_2-2}\\
			\leq & \left(1+r_2\left(\frac{1}{\log a}+\frac{1}{\log b}\right)\right)\limsup_{m_1,n_1 \to \infty}\frac{\log\mu(B_{\tilde{d}}(x,-n_1,m_1,\frac{1}{4}r_{1}))}{-(n_1+m_1)}.
		\end{split}
	\end{equation*}
	Let $r_{1}\to 0$.
	It shows that for $\mu$-a.e. $x\in \mathcal X$,
	\begin{align*}
		\overline{h}_{\mu,\tilde{d}}^{BK,r_2}(f,x)=&
		\underline{h}_{\mu,\tilde{d}}^{BK,r_2}(f,x)\\
		=&h_{\mu}(f,x)\left(1+r_{2}\left(\frac{1}{\log a}+\frac{1}{\log b}\right)\right)\\
		=&d_{\mu}(x,\tilde{d})\left(r_2+\frac{1}{\frac{1}{\log a}+\frac{1}{\log b}}\right)
	\end{align*}
	by (\ref{equ 4.19}).
	If $\mu\in \mathcal E(f)$, for $\mu$-a.e. $x\in \mathcal X$,
	$$\overline{h}_{\mu,\tilde{d}}^{BK,r_2}(f,x)=
	\underline{h}_{\mu,\tilde{d}}^{BK,r_2}(f,x)=\overline{h}_{\mu,\tilde{d}}^{BK,r_2}(f)=
	\underline{h}_{\mu,\tilde{d}}^{BK,r_2}(f)$$
	and
	$$\dim_{H}(\mu,\tilde{d})=h_{\mu,\tilde{d}}^{BK,r_2}(f)\left(\frac{1}{r_2+\frac{1}{\frac{1}{\log a}+\frac{1}{\log b}}}\right).
	$$
\end{proof}

\begin{proposition}\label{prop 4.5}
	Let $\tilde{d}$ be the metric obtained in Theorem \ref{thm 1.1}. Given any $0<r_2<\frac{3}{\frac{1}{\log a}+\frac{1}{\log b}}$,
	then we have
	\begin{align*}
		\dim_{H}(\mathcal X,\tilde{d})=\overline{h}_{top,\tilde{d}}^{r_2}(f)\left(\frac{1}{r_2+\frac{1}{\frac{1}{\log a}+\frac{1}{\log b}}}\right)=\underline{h}_{top,\tilde{d}}^{r_2}(f)
		\left(\frac{1}{r_2+\frac{1}{\frac{1}{\log a}+\frac{1}{\log b}}}\right).	
	\end{align*}
\end{proposition}

\begin{proof}	
	Given small enough $r_1>0$, by Lemma \ref{lem 4.3}, there exist four positive integers $m_1,n_1,m_2,n_2$ such that
	\begin{align*}
		B_{\rho}(x,-(n_2+2),m_2,e^{-(n_2+m_2+2)r_2})\subseteq& B_{\rho}(x,-n_1,m_1,r_{1})\\
		\subseteq& B_{\rho}(x,-(n_2-2),m_2,e^{-(n_2+m_2-2)r_2}).
	\end{align*}
	It implies that
	\begin{align*}
		r_{\rho}(f,-(n_2-2),m_2,e^{-(m_2+n_2-2)r_2})\leq & r_{\rho}(f,-n_1,m_1,r_{1})\\
		\leq& r_{\rho}(f,-(n_2+2),m_2,e^{-(m_2+n_2+2)r_2}).
	\end{align*}
	Take $m'_2=m_2+[\frac{\log 2}{r_2}]+1$ ,
	$n'_2=n_2+[\frac{\log 2}{r_2}]+1$ such that $\frac{1}{4}e^{-(m_2+n_2+2)r_2}\geq e^{-(m_2'+n'_2+2)r_2}$ and
	$m''_2=m_2-[\frac{\log 2}{r_2}]$ ,
	$n''_2=n_2-[\frac{\log 2}{r_2}]$ such that $e^{-(m_2''+n_2''-2)r_2}\geq 4e^{-(m_2+n_2-2)r_2}$. 	
	It indicates that
	\begin{align*}
		\overline{h}_{top,\tilde{d}}^{r_2}(f)=&\limsup_{m_2'',n_2''\to\infty}\frac{\log r_{\tilde{d}}(f,-(n_2''-2),m_2'',e^{-(m_2''+n_2''-2)r_2})}{m_2''+n_2''-2}\\
		\leq
		&\limsup_{m_2,n_2\to\infty}\frac{\log r_{\tilde{d}}(f,-(n_2-2),m_2,4e^{-(m_2+n_2-2)r_2})}{m_2+n_2-2}\\
		\leq
		&\limsup_{m_2,n_2\to\infty}\frac{\log r_{\rho}(f,-(n_2-2),m_2,e^{-(m_2+n_2-2)r_2})}{m_2+n_2-2}\\
		\leq &\limsup_{m_1,n_1\to\infty}\frac{\log r_{\rho}(f,-n_1,m_1,r_{1})}{n_1+m_1}\cdot(1+r_2k)\\
		\leq &(1+r_2\left(\frac{1}{\log a}+\frac{1}{\log b}\right))\limsup_{m_1,n_1\to\infty}\frac{\log r_{\tilde{d}}(f,-n_1,m_1,\frac{1}{4}r_{1})}{n_1+m_1}
	\end{align*}
	by Lemma \ref{lem 4.2}. For the converse inequality,
	\begin{align*}
		\underline{h}_{top,\tilde{d}}^{r_2}(f)=&\liminf_{m'_2,n'_2\to\infty}\frac{\log r_{\tilde{d}}(f,-(n'_2+3),m'_2,e^{-(m'_2+n'_2+3)r_{2}})}{m'_2+n'_2+3}\\
		\geq &\liminf_{m'_2,n'_2\to \infty}\frac{\log r_{\rho}(f,-(n'_2+3),m'_2,\frac{1}{4}e^{-(m_2+n_2+2)r_{2}})}{m'_2+n'_2+3}\\
		\geq &\liminf_{m_2,n_2\to\infty}\frac{\log r_{\rho}(f,-(n_2+2),m_2,e^{-(m_2+n_2+2)r_{2}})}{m_2+n_2+2}\\
		\geq &(1+r_2\left(\frac{1}{\log a}+\frac{1}{\log b}\right))\liminf_{m_1,n_1\to\infty}\frac{\log r_{\rho}(f,-n_1,m_1,r_{1})}{n_1+m_1}\\\
		\geq &(1+r_2\left(\frac{1}{\log a}+\frac{1}{\log b}\right))\liminf_{m_1,n_1\to\infty}\frac{\log r_{\tilde{d}}(f,-n_1,m_1,4r_{1})}{n_1+m_1}.
	\end{align*}
	Letting $r_{1} \to 0$ and combining with Theorem \ref{thm 3.5},
	we have
	$$\overline{h}_{top,\tilde{d}}^{r_2}(f)=
	\underline{h}_{top,\tilde{d}}^{r_2}(f)= h_{top}(f)+r_2\dim_{H}(\mathcal X,\tilde{d})=\left(r_2+\frac{1}{\frac{1}{\log a}+\frac{1}{\log b}}\right)\dim_{H}(\mathcal X,\tilde{d}) .$$
\end{proof}

Based on Propositions \ref{prop 4.4} and \ref{prop 4.5}, it allows us to establish the variational principles in terms of $r$-neutralized Katok entropy and the $r$-neutralized Brin-Katok entropy.
\begin{corollary}[=Corollary \ref{cor 1.4}]
	Let $f$ be an expansive homeomorphism of $\mathcal X$.	Then for any
	$0<r<\frac{3}{\frac{1}{\log a}+\frac{1}{\log b}},$ we have
	\begin{align*}
		h_{top,\tilde{d}}^r(f)=&\left(1+r\left(\frac{1}{\log a}+\frac{1}{\log b}\right)\right)h_{top}(f)\\
		=&\left(1+r\left(\frac{1}{\log a}+\frac{1}{\log b}\right)\right)\sup_{\mu\in {\mathcal M}(f)}h_{\mu}(f)\\
		=&\left(1+r\left(\frac{1}{\log a}+\frac{1}{\log b}\right)\right)\sup_{\mu\in {\mathcal E}(f)}h_{\mu}(f).
	\end{align*}
	Moreover,
	\begin{align*}
		h_{top,\tilde{d}}^r(f)=&\sup\left\{h_{\mu,\tilde{d}}^{BK,r}(f)\colon\mu\in \mathcal M(f)\right\}
		=\sup\left\{h_{\mu,\tilde{d}}^{BK,r}(f)\colon\mu\in \mathcal E(f)\right\}.
	\end{align*}
\end{corollary}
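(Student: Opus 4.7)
The plan is to combine Propositions \ref{prop 4.4} and \ref{prop 4.5} with the dimension identities in Theorem \ref{thm 1.2} and the classical variational principle. There is no new technical estimate to perform; all the analytic work has been done already, and what remains is to arrange the five equalities in a chain.

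First, I would apply Proposition \ref{prop 4.5} to rewrite
\[
h_{top,\tilde d}^{r}(f) = h_{top}(f) + r\dim_H(\mathcal X,\tilde d),
\]
and then substitute the formula $\dim_H(\mathcal X,\tilde d) = \bigl(\tfrac{1}{\log a}+\tfrac{1}{\log b}\bigr)h_{top}(f)$ from Theorem \ref{thm 1.2}. This yields the factorization $h_{top,\tilde d}^{r}(f) = \bigl(1+r(\tfrac{1}{\log a}+\tfrac{1}{\log b})\bigr)h_{top}(f)$. The classical variational principle of Walters, $h_{top}(f) = \sup_{\mu\in\mathcal M(f)}h_\mu(f) = \sup_{\mu\in\mathcal E(f)}h_\mu(f)$, closes the first block of three equalities. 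The hypothesis $r<\tfrac{3}{\frac{1}{\log a}+\frac{1}{\log b}}$ is exactly the range in which Proposition \ref{prop 4.5} applies, so no additional constraint arises.

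For the supremum identities involving $h_{\mu,\tilde d}^{BK,r}(f)$, I would split into the ergodic and the general invariant case. If $\mu\in\mathcal E(f)$, Proposition \ref{prop 4.4} combined with Theorem \ref{thm 1.2} gives directly
\[
h_{\mu,\tilde d}^{BK,r}(f) = h_\mu(f) + r\dim_H(\mu,\tilde d) = \bigl(1+r(\tfrac{1}{\log a}+\tfrac{1}{\log b})\bigr)h_\mu(f).
\]
Taking $\sup_{\mu\in\mathcal E(f)}$ and using $h_{top}(f)=\sup_{\mu\in\mathcal E(f)}h_\mu(f)$ recovers $h_{top,\tilde d}^{r}(f)$. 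For general $\mu\in\mathcal M(f)$, Proposition \ref{prop 4.4} supplies the pointwise identity $\overline h_{\mu,\tilde d}^{BK,r}(f,x) = \underline h_{\mu,\tilde d}^{BK,r}(f,x) = h_\mu(f,x) + r d_\mu(x,\tilde d)$ for $\mu$-a.e. $x$, and Theorem \ref{thm 3.5} asserts $d_\mu(x,\tilde d) = (\tfrac{1}{\log a}+\tfrac{1}{\log b})h_\mu(f,x)$ almost everywhere. Integrating against $\mu$ and using $\int h_\mu(f,x)\,d\mu(x) = h_\mu(f)$ from the Brin--Katok entropy formula gives the same factorization at the global level, so that $h_{\mu,\tilde d}^{BK,r}(f) = \bigl(1+r(\tfrac{1}{\log a}+\tfrac{1}{\log b})\bigr)h_\mu(f)$ for every $\mu\in\mathcal M(f)$. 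A supremum sandwich then forces the two sup formulas to equal $h_{top,\tilde d}^{r}(f)$: the supremum over $\mathcal E(f)$ already attains the topological value, and the supremum over $\mathcal M(f)$ cannot exceed it by the variational principle applied to $h_\mu(f)$.

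The argument is essentially bookkeeping, and I do not anticipate a genuine obstacle. The only point requiring explicit care is checking that each use of Theorem \ref{thm 1.2} or Proposition \ref{prop 4.4} is on the correct class of measures (ergodic versus merely invariant), and that the integration step in the non-ergodic case is justified by the $L^1$-integrability of $h_\mu(f,\cdot)$ with respect to $\mu$, which is part of the standard Brin--Katok theorem.
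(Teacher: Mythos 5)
Your proposal is correct and is essentially the paper's intended argument; the paper's own proof of this corollary is just the one-line appeal to Propositions \ref{prop 4.4} and \ref{prop 4.5}, and what you write supplies the details that sentence leaves implicit. Every step checks out: the factorization $h_{top,\tilde d}^{r}(f)=\bigl(1+r(\tfrac{1}{\log a}+\tfrac{1}{\log b})\bigr)h_{top}(f)$ follows from Proposition \ref{prop 4.5} together with Theorem \ref{thm 3.6}, the measure-level factorization $h_{\mu,\tilde d}^{BK,r}(f)=\bigl(1+r(\tfrac{1}{\log a}+\tfrac{1}{\log b})\bigr)h_{\mu}(f)$ for arbitrary $\mu\in\mathcal M(f)$ follows by integrating the pointwise identities of Proposition \ref{prop 4.4} and Theorem \ref{thm 3.5} (integrability is automatic here since $h_{\mu}(f,\cdot)$ is $\mu$-a.e.\ bounded by the finite number $h_{top}(f)$), and the final supremum sandwich combined with Walters' variational principle closes the chain. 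One minor cosmetic remark: you invoke ``Theorem \ref{thm 1.2}'' in places where the more precise references are Theorem \ref{thm 3.5} (for $d_{\mu}(x,\tilde d)$ and $\dim_H(\mu,\tilde d)$) and Theorem \ref{thm 3.6} (for $\dim_H(\mathcal X,\tilde d)$), since Theorem \ref{thm 1.2} is the umbrella statement proved by those two; this is harmless but worth tightening.
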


\begin{corollary}\label{cor 4.8}
	Let $f$ be an expansive homeomorphism of $\mathcal X$.	Then for any
	$0<r<\frac{3}{\frac{1}{\log a}+\frac{1}{\log b}}$ and $\mu\in \mathcal E(f)$, the value of $r$-neutralized Katok entropy is independent of the choice of $\delta\in (0,1)$ and
	\begin{align*}
		h_{\mu,\tilde{d}}^{K,r}(f,\delta)
		=h_{\mu}(f)\left(1+r\left(\frac{1}{\log a}+\frac{1}{\log b}\right)\right).
	\end{align*}
	Additionally,	
	\begin{align*}
		h_{top,\tilde{d}}^r(f)
		=\sup\left\{h_{\mu,\tilde{d}}^{K,r}(f,\delta)\colon\mu\in \mathcal E(f)\right\}.
	\end{align*}
\end{corollary}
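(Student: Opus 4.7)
The plan is to reduce everything to the neutralized Brin-Katok formula already established in Proposition \ref{prop 4.4}. Write $k = \frac{1}{\log a}+\frac{1}{\log b}$. For any $\mu \in \mathcal E(f)$ Proposition \ref{prop 4.4} gives $\overline{h}^{BK,r}_{\mu,\tilde{d}}(f) = \underline{h}^{BK,r}_{\mu,\tilde{d}}(f) = (1+rk)h_\mu(f)$. Applying the $\liminf$-version of Lemma \ref{lem 4.1} immediately yields
\[
\underline{h}^{K,r}_{\mu,\tilde{d}}(f,\delta) \geq \underline{h}^{BK,r}_{\mu,\tilde{d}}(f) = (1+rk)h_\mu(f),
\]
which is already independent of $\delta$. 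Hence the entire content of the corollary is the matching upper bound $\overline{h}^{K,r}_{\mu,\tilde{d}}(f,\delta) \leq (1+rk)h_\mu(f)$.

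I would obtain the upper bound by a packing/covering duality inside a large-measure level set. Step one is to observe that the neutralized Brin-Katok rate is unaffected by multiplying the radius by a constant. For any $r_1 < r < r_2$ in $(0,3/k)$ and any $m+n$ large enough, the inclusions
\[
B_{\tilde{d}}(x,-n,m,e^{-(m+n)r_2}) \subseteq B_{\tilde{d}}\bigl(x,-n,m,\tfrac{1}{2}e^{-(m+n)r}\bigr) \subseteq B_{\tilde{d}}(x,-n,m,e^{-(m+n)r_1})
\]
hold, and applying Proposition \ref{prop 4.4} at $r_1,r_2$ followed by $r_1\uparrow r$, $r_2\downarrow r$ sandwiches both the $\liminf$ and $\limsup$ of $-\tfrac{1}{m+n}\log\mu(B_{\tilde{d}}(x,-n,m,\tfrac{1}{2}e^{-(m+n)r}))$ to $(1+rk)h_\mu(f)$ for $\mu$-a.e.\ $x$. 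Fix $\epsilon>0$ and $\delta\in(0,1)$. Step two applies Egorov's theorem to produce $N\in\mathbb{N}$ and a Borel set $A$ with $\mu(A)>1-\delta$ such that
\[
\mu\bigl(B_{\tilde{d}}(x,-n,m,\tfrac{1}{2}e^{-(m+n)r})\bigr) \geq e^{-(m+n)((1+rk)h_\mu(f)+\epsilon)}
\]
for every $x\in A$ and every $m+n\geq N$.

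Step three is the packing. Let $E\subseteq A$ be a maximal subset for which the half-balls $\{B_{\tilde{d}}(x,-n,m,\tfrac{1}{2}e^{-(m+n)r})\}_{x\in E}$ are pairwise disjoint. Disjointness combined with the uniform lower bound forces $|E|\leq e^{(m+n)((1+rk)h_\mu(f)+\epsilon)}$. Maximality implies that for every $y\in A$ some $x\in E$ has its half-ball intersecting that at $y$; the triangle inequality for $\tilde{d}$ applied simultaneously at the indices $-n\leq i\leq m$ then places $y$ inside $B_{\tilde{d}}(x,-n,m,e^{-(m+n)r})$. Consequently $\mu\bigl(\bigcup_{x\in E}B_{\tilde{d}}(x,-n,m,e^{-(m+n)r})\bigr)\geq\mu(A)>1-\delta$, so $E$ is admissible for the Katok spanning number and $r_{\tilde{d}}(f,\mu,\delta,-n,m,e^{-(m+n)r})\leq|E|$. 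Taking $\limsup$ in $m+n$ and then $\epsilon\to 0$ delivers the upper bound, and the independence of $\delta$ drops out automatically. The variational identity then follows by chaining Proposition \ref{prop 4.5}, the classical topological variational principle $h_{top}(f)=\sup_{\mu\in\mathcal{E}(f)}h_\mu(f)$, and the formula just proved. The principal obstacle is precisely this upper bound: the Bowen balls possess no doubling structure, so standard Besicovitch covering arguments are unavailable, and the decisive technical input is the stability of the Brin-Katok rate under constant rescaling of the radius, which lets one freely trade full balls (used to cover $A$) for half-balls (needed for the packing estimate on $|E|$) without losing anything in the exponential rate.
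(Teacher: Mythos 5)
Your proof is correct, and it takes a noticeably different (and more explicit) route than what the paper's surrounding text suggests. You correctly observe that Lemma~\ref{lem 4.1} combined with Propositions~\ref{prop 4.4} and~\ref{prop 4.5} only delivers the lower bound $\underline{h}^{K,r}_{\mu,\tilde d}(f,\delta)\ge(1+rk)h_\mu(f)$; those ingredients alone sandwich the Katok quantity between $(1+rk)h_\mu(f)$ and $(1+rk)h_{top}(f)$, which is not enough unless $\mu$ maximizes entropy. The paper states the corollary without proof, and the machinery it has built (Lemma~\ref{lem 4.3} relating neutralized Bowen balls at radius $e^{-(m+n)r}$ to ordinary two-sided Bowen balls at a fixed radius, with the iteration count rescaled by the factor $1+rk$) together with the classical Katok entropy formula $h^K_\mu(f)=h_\mu(f)$ for ergodic $\mu$, is the presumably intended route: one transfers the neutralized Katok spanning number to the ordinary one via the squeeze in Lemma~\ref{lem 4.3} and then invokes Katok's theorem. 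You instead rerun the proof of Katok's theorem from scratch in the neutralized setting, which is a packing (maximal $\frac{1}{2}$-separated family) inside an Egorov-type set where the lower bound on ball measures is uniform, plus the triangle inequality for $\tilde d$ applied index by index. That works, and the one technical ingredient you correctly single out is that halving the radius does not change the neutralized Brin--Katok rate; your sandwich $r_1<r<r_2$ with $\frac{1}{2}e^{-(m+n)r}$ in the middle, and then $r_1\uparrow r$, $r_2\downarrow r$ using $\lim_{r_2\to r}(1+r_2k)h_\mu(f)=(1+rk)h_\mu(f)$, is the right way to justify it. Your approach is more self-contained (it does not need the classical Katok formula as a black box, nor the cylinder-set bookkeeping of Lemma~\ref{lem 4.3}), at the cost of redoing a packing argument that the paper tacitly leaves to the reader. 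One quibble: what you call ``Egorov'' is really just monotone convergence of the sets $A_N=\{x:\mu(B_{\tilde d}(x,-n,m,\frac12 e^{-(m+n)r}))\ge e^{-(m+n)((1+rk)h_\mu(f)+\epsilon)}\ \forall\, m,n\ge N\}$ to a full-measure set, but the conclusion you draw from it is exactly right, and the claim that $\delta$-independence is automatic is correct since both bounds hold for every $\delta\in(0,1)$.
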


\begin{proof}[Proof of Theorem \ref{thm 1.3}]
	This is due to
	Propositions \ref{prop 4.4} and \ref{prop 4.5}.
\end{proof}

\begin{remark}
 We partially proved the \cite[Conjecture A.1]{DQ25} for any  hyperbolic metrics of the expansive homeomorphism.
\end{remark}

\section{Variational principles  for $\alpha$-estimation entropy}\label{Section 5}
In this section, we delve into the correlation between the $\alpha$-estimation entropy and the classical entropy within the context of the expansive homeomorphism and the positively expansive map.

\subsection{Variational principles  for expansive homeomorphism}
Let $(\mathcal X,\tilde{d})$ be a compact metric space, where $\tilde{d}$ is obtained in Theorem \ref{thm 1.1}.
For any $\alpha\geq 0$, we  define  the \emph{n-th $\alpha$-metric} and the \emph{n-th $\alpha$-estimation (Bowen) ball} as
\begin{equation}\label{equ 5.20}
\begin{split}
	\tilde{d}_{n}^{\alpha}(x,y)=&\max_{0\leq i\leq n} e^{i \alpha}\tilde{d}(f^ix,f^iy),\\
	B_{\tilde{d}}(x,n,\alpha,r)=&\{y\in \mathcal X: \tilde{d}(f^ix,f^iy)<e^{-i\alpha}r, 0\leq i\leq n\}
\end{split}
\end{equation}

Unlike classical Bowen balls, the radius of each layer in the $\alpha$-ball exponentially decays with increasing iterations of the system. However, this decay rate is slower than that of a $r$-neutralized ball when $\alpha=r$, which poses a challenge for us to define a two-sided $\alpha$-estimation ball. To give an answer to Question 2, we introduce the following $\alpha$-ball:

\begin{definition}
	For $m,n\in \mathbb N, r>0$, set
	\begin{align*}
		B_{\tilde{d}}(x,-n,m,\alpha,r)=\{y\in \mathcal X: \tilde{d}(f^ix,f^iy)<e^{-|i|\alpha}r, -n\leq i\leq m\}.
	\end{align*}
	We call it as two-sided $\alpha$-estimation ball.
\end{definition}

Denote by
\begin{align*}
	r_{\tilde{d}}^{\alpha}(f,-n,m,r)=\min \left\{\# E\colon \mathcal X=\bigcup_{x\in E} B_{\tilde{d}}(x,n+m,\alpha,r)\right\}.
\end{align*}
Then the upper and lower $\alpha$-estimation topological entropy are defined respectively by
\begin{equation}\label{equ 5.21}
	\begin{split}
		\widetilde{\overline{h}_{est,\tilde{d}}^{\alpha}}(f):=&\lim_{r\to 0}\limsup_{m,n\to\infty}\frac{1}{m+n}\log r_{\tilde{d}}^{\alpha}(f,-n,m,r),\\
		\widetilde{\underline{h}_{est,\tilde{d}}^{\alpha}}(f):=&\lim_{r\to 0}\liminf_{m,n\to\infty}\frac{1}{m+n}\log r_{\tilde{d}}^{\alpha}(f,-n,m,r).
	\end{split}
\end{equation}
For each $\mu\in \mathcal{M}(f)$, similar with the definition in Section \ref{Section 4}, one can  define the upper  $\alpha$-estimation local entropy, Brin-Katok entropy, Katok entropy by using  $\alpha$-estimation ball  and letting $r\to 0$
as $\colon$ $\widetilde{\overline{h}_{\mu,\tilde{d}}^{BK,\alpha}}(f,x)$, $\widetilde{\overline{h}_{\mu,\tilde{d}}^{BK,\alpha}}(f)$,
$\widetilde{\overline{h}_{\mu,\tilde{d}}^{K,\alpha}}(f,\delta)$.
By replacing $\limsup$ with $\liminf$, one can similarly define the lower $\alpha$-estimation entropy.

\begin{remark}
	In contrast to the $r$-neutralized entropy, the $\alpha$-estimation topological entropy is not subject to consideration of its dependence on $r$. However,  it is also dependent on the compatible metric of $\mathcal X$, where
	Kawan \cite[Example 1]{Kaw18} gave a supporting example.
	
\end{remark}

The next Lemma relate the  $\alpha$-ball with 
cylinder set and is a bit different from Lemmas \ref{lem 3.3} and \ref{lem 4.4}.
\begin{lemma}\label{prop 5.3}
	Fix $0\leq \alpha< \min \{\log a, \log b\}$ and $r_3\in (0,1)$.	For any sufficiently small $0<r<r_3$, there exist  positive integers $m_3$ and  $n_3$ satisfying the following statement:
	\begin{itemize}
		\item [\rm (1)] for any $x\in \mathcal X$,
		$$B_{\rho}(x,-(n_3+1),m_3+1,\alpha,r_3)\subseteq B_\rho(x,r)\subseteq B_{\rho}(x,-(n_3-1),m_3-1,\alpha,r_3);$$
		
		\item [\rm (2)]$m_3,n_3\rightarrow\infty$, as $r\rightarrow0$.
		
		\item [\rm (3)] $
		\lim\limits_{r \to 0}\frac{m_3+n_3}{\log \frac{1}{r}}=\frac{1}{\log a+\alpha}+ \frac{1}{\log b+\alpha}.$
	\end{itemize}
\end{lemma}

\begin{proof}
	Following Lemma \ref{lem 3.2}, we easily obtain that
	$$B_{\rho}(x,-n_3,m_3,\alpha,r_3)= C_{-q(e^{-n_3\alpha}r_3)-n_3}^{p(e^{-m_3\alpha}r_3)+m_3}(x).$$
	Similar to the proof of Lemma \ref{lem 4.4}, we have
	$n_3'\in (\frac{\log \frac{1}{r}-\log \frac{1}{r_3}-\log a  }{\log a+\alpha},
	\frac {\log \frac{1}{r}-\log \frac{1}{r_3}+\log a  }{\log a+\alpha})$ and $m_3'\in (\frac{\log \frac{1}{r}-\log \frac{1}{r_3}-\log b  }{\log b+\alpha},
	\frac {\log \frac{1}{r}-\log \frac{1}{r_3}+\log b  }{\log b+\alpha})$ Moreover, $|m_3-m_3'|\leq 1, |n_3-n_3'|\leq 1$. 
	As $0<\alpha<\min\{\log a, \log b\}$, it indicates that 
	for any given $m,n \in \mathbb N$, there exists  an integer $j_1,j_2\in [-1,1]$ such that the  equations
	\begin{align*}
		\left\{\begin{array}{cc}
			m+j_1=p(r)-p(e^{-m\alpha}r_3)\\
			n+j_2=q(r)-q(e^{-n\alpha}r_3)
		\end{array}
		\right.
	\end{align*}
	have positive integer solutions $m_3,n_3$. Similar with Lemma \ref{lem 4.4}, we have
	$$B_{\rho}(x,-(n_3+1),m_3+1,\alpha,r_3)\subseteq B_\rho(x,r)\subseteq B_{\rho}(x,-(n_3-1),m_3-1,\alpha,r_3).$$
	Notice that
	\begin{align*}
		\lim_{m_3 \to \infty}\frac{p(e^{-m_3\alpha}r_3)}{m_3}
		&=\lim_{r \to 0}\frac{p(r)-m_3}{m_3}
		=\frac{\alpha}{\log b},\\
		\lim_{n_3 \to \infty}\frac{q(e^{-n_3\alpha}r_3)}{n_3}
		&=\lim_{r \to 0}\frac{q(r)-n_3}{n_3}
		=\frac{\alpha}{\log a}.
	\end{align*}
	Hence, $$\lim_{r \to 0}\frac{m_3+n_3}{-\log r}=\lim_{r \to 0}\frac{m_3}{p(r)} \lim_{r \to 0}\frac{p(r)}{-\log r}+\lim_{r \to 0}\frac{n_3}{q(r)} \lim_{r \to 0}\frac{q(r)}{-\log r}=\frac{1}{\log a+\alpha}+\frac{1}{\log b+\alpha},$$
	which ends the proof.	
\end{proof}

Then we proceed to prove Theorem \ref{thm 1.5}.

\begin{proof}[Proof of Theorem \ref{thm 1.5}]
	By Lemma \ref{prop 5.3},	for sufficiently small $0<r<r_3<1$ and any $0\leq \alpha< \min \{\log a, \log b\}$, there exist two positive integers $m$ and  $n$ such that
	$$B_{\rho}(x,-(n_3+1),m_3+1,\alpha,r_3)\subseteq B_\rho(x,r)\subseteq B_{\rho}(x,-(n_3-1),m_3-1,\alpha,r_3).$$
	
	Denote $$k=\frac{1}{\log a}+\frac{1}{\log b} \quad \text{and}\quad  k_{\alpha}=\frac{1}{\log a+\alpha}+\frac{1}{\log b+\alpha}.$$	
	Repeat the proof of Theorem	\ref{thm 3.5} and \ref{thm 3.6},
	for any $\mu\in \mathcal M (f)$, we have
	$$\frac{d_{\mu}(x,\tilde{d})}{k_{\alpha}}=\widetilde{\overline{h}_{\mu,\tilde{d}}^{BK,\alpha}}(f,x)=\widetilde{\underline{h}_{\mu,\tilde{d}}^{BK,\alpha}}(f,x), \quad \mu\text{-}a.e.$$
	Hence, if $\mu\in \mathcal E (f)$, it is obviously that for $\mu\text{-}a.e.$ $x\in \mathcal X,$
	\begin{align*}
		&\widetilde{\overline{h}_{\mu,\tilde{d}}^{BK,\alpha}}(f,x)=\widetilde{\underline{h}_{\mu,\tilde{d}}^{BK,\alpha}}(f,x)=\widetilde{\overline{h}_{\mu,\tilde{d}}^{BK,\alpha}}(f)=\widetilde{\underline{h}_{\mu,\tilde{d}}^{BK,\alpha}}(f)\\
		&\widetilde{\overline{h}_{est,\tilde{d}}^{\alpha}}(f)=\widetilde{\underline{h}_{est,\tilde{d}}^{\alpha}}(f)
	\end{align*}
	and their common value satisfy the following equations:
	\begin{align*}
		\dim_{H}(\mu,\tilde{d})&=\widetilde{h_{\mu,\tilde{d}}^{BK,\alpha}}(f)\left(\frac{1}{\log a+\alpha}+\frac{1}{\log b+\alpha}\right),\\
		\dim_{H}(\mathcal X,\tilde{d})&=\widetilde{h_{est,\tilde{d}}^{\alpha}}(f)\left(\frac{1}{\log a+\alpha}+\frac{1}{\log b+\alpha}\right).
	\end{align*}
	Connecting with Theorems \ref{thm 3.5} and \ref{thm 3.6}, we finish the proof.
\end{proof}

Denote by $\widetilde{h_{est,\tilde{d}}^{\alpha}}(f)$ ($\widetilde{h_{\mu,\tilde{d}}^{BK,\alpha}}(f),\widetilde{h_{\mu,\tilde{d}}^{K,\alpha}}(f)$) the common value of $\alpha$-estimation entropy when the upper and lower estimation entropy coincide. Then we obtain the following variational principles.
\begin{corollary}
	Let $\mu\in \mathcal E(f)$ and   $0\leq \alpha< \min\{\log a,\log b\}$. Then for any $\delta \in (0,1)$,  the value of $\alpha$-estimation Katok entropy is independent of the choice of $\delta$. Then
	\begin{align*}
		\widetilde{h_{est,\tilde{d}}^{\alpha}}(f)=&\sup\{\widetilde{h_{\mu,\tilde{d}}^{BK,\alpha}}(f)\colon\mu\in \mathcal E(f)\}
		=\sup\{\widetilde{h_{\mu,\tilde{d}}^{K,\alpha}}(f)\colon\mu\in \mathcal E(f)\}
	\end{align*}
	and
	\begin{align*}
		\widetilde{h_{est,\tilde{d}}^{\alpha}}(f)
		=\sup\{\widetilde{h_{\mu,\tilde{d}}^{BK,\alpha}}(f)\colon\mu\in \mathcal M(f)\}.
	\end{align*}
\end{corollary}

\subsection{Variational principles  for expansive maps}
We are interested in the relationship of $r$-neutralized entropy and $\alpha$-estimation entropy when $\alpha=r$, while Theorems \ref{thm 1.3} and \ref{thm 1.5} give a negative answer. However, it may be posssible to explore the relationship for expansive maps.

If we replace $B_{\tilde{d}}(x,-n,m,\alpha,r)$ by  $B_{\tilde{d}}(x,n,\alpha,r)$  in (\ref{equ 5.21})
and define
\begin{equation}\label{equ 5.22}
	\begin{split}
		\overline{H}_{est,\tilde{d}}^{\alpha}(f):=&\lim_{r\to 0}\limsup_{n\to\infty}\frac{1}{n}\log r_{\tilde{d}}^{\alpha}(f,n,r),\\
		\underline{H}_{est,\tilde{d}}^{\alpha}(f):=&\lim_{r\to 0}\liminf_{n\to\infty}\frac{1}{n}\log r_{\tilde{d}}^{\alpha}(f,n,r),
	\end{split}
\end{equation}
it is uncertain whether (\ref{equ 5.21}) and (\ref{equ 5.22}) are equal since the relationship in Proposition \ref{prop 3.4} is not hold.

Nevertheless, we can obtain some results for a positively expansive map. In this condition, we have
$\rho(x,y)=b^{-n^+(x,y)}$.

\begin{proposition}\label{prop 5.5}
	Let $f$ be a positively expansive map. Given $0\leq \alpha< \log b$. Then for any $\mu \in \mathcal M(f)$ and  $\mu$-a.e. $x\in \mathcal X,$
	\begin{align*}  d_{\mu}(x,\tilde{d})=\overline{H}_{\mu,\tilde{d}}^{BK,\alpha}(f,x)\left(\frac{1}{\alpha+\log b}\right)=\underline{H}_{\mu,\tilde{d}}^{BK,\alpha}(f,x)\left(\frac{1}{\alpha+\log b}\right).
	\end{align*}
	Moreover, if $\mu \in \mathcal E(f)$, then for $\mu$-a.e. $x\in \mathcal X,$
	\begin{align*}
		&\overline{H}_{\mu,\tilde{d}}^{BK,\alpha}(f)=\underline{H}_{\mu,\tilde{d}}^{BK,\alpha}(f)=\overline{H}_{\mu,\tilde{d}}^{BK,\alpha}(f,x)=\underline{H}_{\mu,\tilde{d}}^{BK,\alpha}(f,x),\\
		&\overline{H}_{est,\tilde{d}}^{\alpha}(f)=\underline{H}_{est,\tilde{d}}^{\alpha}(f),
	\end{align*}
	and their common value satisfy the following relations:
	\begin{align*}
		&\dim_{H}(\mu,\tilde{d})=H_{\mu,\tilde{d}}^{BK,\alpha}(f)\left(\frac{1}{\alpha+\log b}\right),\\
		&\dim_{H}(X,\tilde{d})=H_{est,\tilde{d}}^{\alpha}(f)\left(\frac{1}{\alpha+\log b}\right),
	\end{align*}
	where $\overline{H}_{\mu,\tilde{d}}^{BK,\alpha}(f,x),\underline{H}_{\mu,\tilde{d}}^{BK,\alpha}(f,x),\overline{H}_{\mu,\tilde{d}}^{BK,\alpha}(f),\underline{H}_{\mu,\tilde{d}}^{BK,\alpha}(f)$ denote the corresponding local entropy and Brin-Katok entropy with  $B_{\tilde{d}}(x,n,\alpha,r)$.
\end{proposition}

\begin{remark}
	\begin{enumerate}
		\item For a positively expansive map, the $r$-neutralized entropy   is equal to the $\alpha$-estimation entropy if and only if $0<\alpha=r< \frac{1}{\log b}$.
		
		\item Consider an expansive homeomorphism. The $r$-neutralized entropy   is equal to the $\alpha$-estimation entropy with respect to the metric $\tilde{d}$ if and only if
		the following equation has a solution:
		\begin{align}\label{equ 5.23}
			r+\frac{1}{\frac{1}{\log a}+\frac{1}{\log b}}=\frac{1}{\frac{1}{\log a+\alpha}+\frac{1}{\log b+\alpha}},
		\end{align}
		where $0<r<\frac{3}{\frac{1}{\log a}+\frac{1}{\log b}}$ and $0< \alpha<\min\{\log a,\log b\}$.	
		\begin{enumerate}
			\item If $a=b$, then \eqref{equ 5.23} has a unique solution $\Leftrightarrow \alpha=2r<\min\{\log a,\log b\}$.
			
			\item For fixed $\alpha$, the equation has a unique solution $r(\alpha)$.
			
			\item For fixed $r$, the solution exists only and if only
			\begin{align*}
				\alpha=\frac{-t(\log a+\log b)+2+\sqrt{t^2(\log a-\log b)^2+4}}{2t}<\min\{\log a,\log b\},
			\end{align*}
			where $\frac{1}{t}=r+\frac{1}{\frac{1}{\log a}+\frac{1}{\log b}}>0$.
			
		\end{enumerate}
	\end{enumerate}
\end{remark}

\section{Expansive aperiodic zero-dimensional homeomorphism}\label{Section 6}
In this section, we prove that the condition of  $a,b<2$ can be removed  for any zero-dimensional expansive homeomorphisms.

We first propose an example of  symbolic systems with finite symbols and show that  the condition of $a,b<2$ can be dropped under such systems.

Let $E=\{0,1,\cdots,M-1\}$ ($M \geq 2$) be $M$ symbols equipped with discrete
topology. Consider the left shift $\sigma$ on
$\Sigma=\{0,1\cdots,M-1\}^{\mathbb Z} $ with product topology.
Define
\begin{align*}
	n^+(w,\tau)&=\min\{ m\in \mathbb N\colon   w_m\neq\tau_m\},\\
	n^-(w,\tau)&=\min\{ n\in \mathbb N\colon   w_{-n}\neq\tau_{-n}\}.
\end{align*}
Take $a,b>1$  and define $\rho(w,\tau)=\max\{a^{-n^-(w,\tau)},b^{-n^+(w,\tau)}\}.$ Then $\rho$ is a metric compatible with the topology on $\Sigma$.
\begin{proposition}\label{prop 6.1}
	Let $r>0,\alpha\geq 0.$	 Then we have$\colon$
	\begin{enumerate}
		\item \emph{Local entropy and pointwise dimension}: if $\nu\in \mathcal M(\sigma)$, then for $\nu$-a.e. $w\in \Sigma,$
		\begin{align*}
			&d_\nu(w,\rho)=h_{\nu}(\sigma,w)\left(\frac{1}{\log a}+\frac{1}{\log b}\right);\\
			&d_\nu(w,\rho)=\widetilde{h_{\nu,\rho}^{BK,\alpha}}(\sigma,w)\left(\frac{1}{\log a+\alpha}+\frac{1}{\log b+\alpha}\right);\\
			&d_\nu(w,\rho)=h_{\nu,\rho}^{BK,r}(\sigma,w)\left(\frac{1}{r+\frac{1}{\frac{1}{\log a}+\frac{1}{\log b}}}\right).
		\end{align*}

		\item \emph{Brin-Katok entropy and Hausdorff dimension of measures}: if $\nu\in \mathcal E(\sigma)$, then for $\nu$-a.e. $w\in \Sigma,$
		\begin{align*}
			&\dim_H(\nu,\rho)=\dim_B(\nu,\rho)=h_{\nu}(\sigma)\left(\frac{1}{\log a}+\frac{1}{\log b}\right);\\
			&\dim_H(\nu,\rho)=\widetilde{h_{\nu,\rho}^{BK,\alpha}}(\sigma)\left(\frac{1}{\log a+\alpha}+\frac{1}{\log b+\alpha}\right);\\
			&\dim_H(\nu,\rho)=h_{\nu,\rho}^{BK,r}(\sigma)\left(\frac{1}{r+\frac{1}{\frac{1}{\log a}+\frac{1}{\log b}}}\right).
		\end{align*}
		
		\item \emph{Topological entropy and Hausdorff dimension of $\Sigma$:}	
		\begin{align*}
			&\dim_H(\Sigma,\rho)=\dim_B(\Sigma,\rho)=h_{top}(\sigma)\left(\frac{1}{\log a}+\frac{1}{\log b}\right);\\
			&\dim_H(\Sigma,\rho)=\widetilde{h_{top,\rho}^{\alpha}}(\sigma)\left(\frac{1}{\log a+\alpha}+\frac{1}{\log b+\alpha}\right);\\
			&\dim_H(\Sigma,\rho)=h_{top,\rho}^{r}(\sigma)\left(\frac{1}{r+\frac{1}{\frac{1}{\log a}+\frac{1}{\log b}}}\right).
		\end{align*}
		
	\end{enumerate}
\end{proposition}

\begin{proof}
	We only provide the proofs for the first three equations. The remaining equalities can be proved by using a similar approach.
	Let $\alpha\geq 0$  and $r>0. $
	For any $r_0,r_1,r_2\in (0,1)$,  we notice that
	\begin{align*}
		B_{\rho}(w,r_0)=&\left\{\tau\in \Sigma: n^+(w,\tau)>\frac{\log \frac{1}{r_0}}{\log b}, n^-(w,\tau)>\frac{\log \frac{1}{r_0}}{\log a}
		\right\}
		\\=&\left\{\tau\in \Sigma: \tau_i=w_i, -[\frac{\log \frac{1}{r_0}}{\log a}]\leq i\leq [\frac{\log \frac{1}{r_0}}{\log b}]\right\}
	\end{align*}
	Then the three types of balls are expressed as:	
	\begin{align*}	
		&B_\rho(w,-n,m,e^{-(n+m)r})\\
		=&\left\{\tau\in \Sigma: \tau_i=w_i, -[\frac{(n+m)r}{\log a}]-n\leq i\leq [\frac{(n+m)r}{\log b}]+m\right\};\\
		&B_\rho(w,-n_1,m_1,r_1)\\
		=&\left\{\tau\in \Sigma: \tau_i=w_i, -[\frac{\log \frac{1}{r_1}}{\log a}]-n_1\leq i\leq [\frac{\log \frac{1}{r_1}}{\log b}]+m_1\right\}	;\\
		&B_\rho(w,-n_2,m_2,\alpha,r_2
		)\\
		=&\left\{\tau\in \Sigma: \tau_i=w_i, -[\frac{n_2\alpha+\log\frac{1}{r_2}}{\log a}]-n_2\leq i\leq [\frac{m_2\alpha+\log\frac{1}{r_2}}{\log b}]+m_2\right\}.
	\end{align*}
	
	Let $\nu\in \mathcal M(\sigma)$. Define $$k=\frac{1}{\log a}+\frac{1}{\log b} \quad \text{and}\quad  k_{\alpha}=\frac{1}{\log a+\alpha}+\frac{1}{\log b+\alpha}.$$
	\begin{enumerate}		
		\item Linking open Balls with Bowen balls by taking
		$$m_1=[\frac{-\log r_0}{\log b}]-[\frac{-\log r_1}{\log b}],\quad n_1=[\frac{-\log r_0}{\log a}]-[\frac{-\log r_1}{\log a}].$$
		Then $B_{\rho}(w,r_0)=B_\rho(w,-n_1,m_1,r_1)$.
		Similar with Theorem \ref{thm 3.5}, for $\nu$-a.e. $w\in \Sigma$, we have
		\begin{align*}
			d_\nu(w,\rho)=\lim_{r_1 \to 0}\frac{\log \mu_{\rho}(w,r)}{\log r_0}
			=&\lim_{m_1,n_1 \to \infty}\frac{\log \nu(B_\rho(w,-n_1,m_1,r_1))}{m_1+n_1}\cdot \lim_{r_1 \to 0}\frac{m_1+n_1}{\log r_0}\\
			=&kh_{\nu}(\sigma,w).
		\end{align*}
		
		\item Linking Bowen balls with $r$-neutralized balls by taking
		$$m_1=[\frac{(n+m)r}{\log b}]+m-[\frac{-\log r_1}{\log b}],\quad n_1=[\frac{(n+m)r_1}{\log a}]+n-[\frac{-\log r_1}{\log a}].$$
		Then $B_\rho(w,-n_1,m_1,r_1)=B_\rho(w,-n,m,e^{-(n+m)r})$. For $\nu$-a.e. $w\in \Sigma$,
		\begin{align*}
			\overline{h}_{\nu,\rho}^{BK,r}(\sigma,w)&=\underline{h}_{\nu,\rho}^{BK,r}(\sigma,w)=h_{\nu}(\sigma,w)\lim_{m,n\to \infty}\frac{m_1+n_1}{m+n}= (1+rk)h_{\nu}(\sigma,w),
		\end{align*}
		
		\item Linking open balls with $\alpha$-estimation balls by taking
		\begin{align*}
			[\frac{\log \frac{1}{r_0}}{\log b}]=[\frac{m_2\alpha-\log r_2}{\log b}]+m_2,\quad [\frac{\log \frac{1}{r_0}}{\log a}]=[\frac{n_2\alpha-\log r_2}{\log a}]+n_2,
		\end{align*}
		respectively.
		Then there exists $m_2,n_2 \in \mathbb N$ such that  $$B_\rho(w,-(n_2+1),m_2+1,\alpha,r_2
		)\subseteq B_{\rho}(w,r)\subseteq B_\rho(w,-(n_2-1),m_2-1,\alpha,r_2
		).$$ 	
		Hence, for $\nu$-a.e. $w\in \Sigma$,	\begin{align*}
			\widetilde{\overline{h}_{\nu,\rho}^{BK,\alpha}}(\sigma,w)=\widetilde{\underline{h}_{\nu,\rho}^{BK,\alpha}}(\sigma,w)=d_\nu(w,\rho)\lim_{r_0}\frac{-\log r_0}{n_2+m_2} =\frac{d_\nu(w,\rho)}{k_\alpha} ,
		\end{align*}
	\end{enumerate}
	which finishes the proof.
\end{proof}

Actually, for a general expansive homeomorphism, if  following the approach of the construction of  hyperbolic metrics in Section \ref{Section 2}, we are failed to drop the  condition of $a,b<2$. (See Remark \ref{rem 2.9}.)
However, for some certain expansive systems, for instance called as zero-dimensional aperiodic expansive homeomorphism, we can remove this condition.

For example, any minimal subshift of finite symbols is a zero-dimensional aperiodic expansive homeomorphism.
On the study of the zero-dimensional aperiodic expansive homeomorphism,  two important tools are symbolic systems and  topological universality.
\begin{definition}
	Let $\mathcal C$ be a collection of expansive
	aperiodic zero-dimensional systems. A topological system $(\mathcal Y,S)$ is said to be topologically $\mathcal C$-universal if for any $(\mathcal X, f)\in \mathcal C$, there exists a subsystem $(\mathcal Z,R)$ of $(\mathcal Y,S)$ topologically conjugated to $(\mathcal X, f)$.
\end{definition}

Regarding topological universality, Krieger achieved an important result in \cite{Kri82}, which established a connection between the full shift and topological universality. Denote by
$$\{h_{top} < \log M\}=\{(\mathcal X,f)\in \mathcal C\colon h_{top}(f) < \log M
\}.$$
\begin{lemma}[\cite{Kri82}]\label{lem 6.2}
	Let $(\Sigma,\sigma)$ be a full shift of $M$-symbols. Then $(\Sigma,\sigma)$ is topologically
	$\{h_{top} < \log M\}$-universal.
\end{lemma}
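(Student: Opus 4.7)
The plan is to reduce the problem to embedding an aperiodic subshift of entropy below $\log M$ into the full $M$-shift, and then to construct such an embedding via a marker-driven block code. First I would observe that any expansive zero-dimensional homeomorphism $(\mathcal X,f)$ is topologically conjugate to a subshift of some finite-alphabet full shift: taking a clopen partition of $\mathcal X$ with pieces of diameter less than the expansive constant, the itinerary map yields a topological conjugacy from $(\mathcal X,f)$ onto a subshift $(Y,\sigma)$ over a finite alphabet $E$. So it suffices to embed any aperiodic subshift $(Y,\sigma)$ with $h_{top}(\sigma|_Y)<\log M$ into $(\Sigma,\sigma)$ as a subsystem.

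The key tool is Krieger's marker lemma: using aperiodicity together with zero-dimensionality, one produces, for each prescribed large $N$, a clopen set $W\subset Y$ whose first-return time under $\sigma$ takes values in a narrow window around $N$ and whose return tower partitions $Y$. This decomposes every $Y$-orbit into blocks of bounded length in a shift-equivariant way. Because $h_{top}(\sigma|_Y)<\log M$, for $N$ large enough the number of length-$N$ words appearing in $Y$ is strictly less than $M^{N-K}$ for some fixed buffer $K$; this leaves room to reserve a $K$-symbol header as a synchronization pattern inside each coded block while still having an injective code of the $Y$-block content into the remaining $N-K$ positions.

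Then I would build the embedding block by block: replace each marker-delimited $Y$-block by the concatenation of its header and its encoded content, producing a continuous shift-commuting injection $Y\to \Sigma$. The image is a subshift: continuity holds because the code on each letter is determined by finitely many coordinates on either side (the nearest marker lies at bounded distance), and invertibility follows by first locating headers in the coded sequence and then inverting the finite block code.

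The main obstacle is choosing the header so that it can never be generated internally by the block code: one must verify that a header word can be forbidden from the block-code alphabet without destroying injectivity, which amounts to a combinatorial counting argument consuming part of the entropy gap $\log M - h_{top}(\sigma|_Y)$. This is exactly the delicate combinatorial core of Krieger's original construction, and getting the parameters $N$ and $K$ compatible across the three requirements (Rokhlin-type tower from the marker lemma, injective block code, and unambiguous header) is where most of the technical work lies.
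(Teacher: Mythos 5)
The paper does not prove this lemma itself; it cites it directly to Krieger's 1982 paper on subsystems of topological Markov chains, so there is no in-paper argument to compare against. Your sketch is a faithful high-level reconstruction of Krieger's own approach — conjugating the aperiodic zero-dimensional expansive system to a subshift via a clopen generating partition, invoking the marker lemma (where aperiodicity makes the periodic-point obstruction in the embedding theorem vacuous), and using the entropy gap $\log M - h_{top}$ to fund an injective sliding block code with a reserved synchronizing header — and you correctly flag the genuinely delicate step, namely arranging a header word that cannot be mimicked by the block code, as the combinatorial heart of Krieger's construction.
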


Then we can obtain a stronger result for 
the expansive
aperiodic zero-dimensional homeomorphism.
\begin{theorem}
	Let $(\mathcal X, f)$ be an expansive
	aperiodic zero-dimensional homeomorphism. Then for any $a,b>1$, there exists a metric $\tilde{d}$ defining its topology such that
	\begin{enumerate}
		\item if
		$\mu\in \mathcal M(f)$, then for $\mu$-a.e. $x\in \mathcal X,$
		\begin{equation}\label{equ 6.1}
			\begin{split}
				d_{\mu}(x,\tilde{d})=&h_{\mu}(f,x)\left(\frac{1}{\log a}+\frac{1}{\log b}\right);\\	
				d_{\mu}(x,\tilde{d})=&\widetilde{h_{\mu,\tilde{d}}^{BK,\alpha}}(f,x)\left(\frac{1}{\log a+\alpha}+\frac{1}{\log b+\alpha}\right)\\
				d_{\mu}(x,\tilde{d})=&h_{\mu,\tilde{d}}^{BK,r}(f,x)\left(\frac{1}{r+\frac{1}{\frac{1}{\log a}+\frac{1}{\log b}}}\right).
			\end{split}
		\end{equation}
		
		\item If $\mu\in \mathcal E(f)$, then for $\mu$-a.e. $x\in \mathcal X,$
		\begin{equation}\label{equ 6.2}
			\begin{split}	
				\dim_{H}(\mu,\tilde{d})=&h_{\mu}(f)\left(\frac{1}{\log a}+\frac{1}{\log b}\right);\\	
				\dim_{H}(\mu,\tilde{d})=&\widetilde{h_{\mu,\tilde{d}}^{BK,\alpha}}(f)\left(\frac{1}{\log a+\alpha}+\frac{1}{\log b+\alpha}\right)\\
				\dim_{H}(\mu,\tilde{d})=&h_{\mu,\tilde{d}}^{BK,r}(f)\left(\frac{1}{r+\frac{1}{\frac{1}{\log a}+\frac{1}{\log b}}}\right).
			\end{split}
		\end{equation}
		\item
		\begin{equation}\label{equ 6.3}
			\begin{split}
				\dim_{H}(X,\tilde{d})=&h_{top}(f)\left(\frac{1}{\log a}+\frac{1}{\log b}\right);\\	
				\dim_{H}(X,\tilde{d})=&\widetilde{h_{top,\tilde{d}}^{\alpha}}(f)\left(\frac{1}{\log a+\alpha}+\frac{1}{\log b+\alpha}\right)\\
				\dim_{H}(X,\tilde{d})=&h_{top,\tilde{d}}^{r}(f)\left(\frac{1}{r+\frac{1}{\frac{1}{\log a}+\frac{1}{\log b}}}\right).
			\end{split}
		\end{equation}
	\end{enumerate}
\end{theorem}

\begin{proof}
	Since $f$ is an expansive homeomorphism, then there exists $M\in \mathbb N$ such that $h_{top}(f)<\log M$.
	Let $(\Sigma,\sigma)$ be the full shift of $M$ symbols given in Proposition \ref{prop 6.1}.
	By Lemma \ref{lem 6.2}, there exists a subsystem $(\Sigma',\sigma)$ of $(\Sigma,\sigma)$ topologically conjugated to $(\mathcal X, f)$ by the homeomorphism $\pi\colon \mathcal  X\rightarrow \Sigma'$. Let $\rho$ be the metric defined in Proposition \ref{prop 6.1}. Then the equalities (\ref{equ 6.1}),(\ref{equ 6.2}),(\ref{equ 6.3}) also hold for the subsystem
	$(\Sigma',\sigma)$ and any invariant measures $\nu\in \mathcal M(\sigma)$.
	
	For any $x,y\in \mathcal X$, we define the metric on $\mathcal X$ by
	$$\tilde{d}(x,y)=\rho(\pi x,\pi y).$$
	which is compatible with the topology of $\mathcal{X}$. Since for any $\mu \in \mathcal{M}(f)$,  we have $\pi_\ast\mu \in \mathcal{M}(\sigma)$. Then for $\mu$-a.e. $x \in \mathcal{X}$ and $\pi_\ast\mu $-a.e. $w \in \Sigma'$,
	by the relation
	$B_{\tilde{d}}(x, r) = \pi^{-1} B_{\rho}(\pi x, r)$,
	we have
	\begin{align*}
		h_{\mu}(f) &= h_{\mu \circ  \pi}(\sigma, \Sigma'), \quad \dim_{H}(\mu, \tilde{d}) = \dim_{H}(\pi_\ast\mu, \rho), \quad d_{\mu}(x,\tilde{d}) = d_{\pi_\ast\mu}(w,\rho); \\
		h_{\text{top}}(f) &= h_{\text{top}}(\sigma, \Sigma'), \quad \dim_{H}(f, \tilde{d}) = \dim_{H}(\Sigma', \rho).
	\end{align*}
	Furthermore,  $(\mathcal{X}, f)$ and $(\Sigma', \sigma)$ share the same $r$-neutralized entropy and $\alpha$-estimation entropy, which finishes the proof.
\end{proof}

\begin{open question}
	For a general expansive homeomorphism, whether 
	there exists another method to construct compatible metrics such that 
	it is possible to drop the condition of $a,b<2$.
\end{open question}

\section*{Acknowledgments}
The first author was supported by the National Natural Science Foundation of China (Nos.12471184 and 12071222). The work was also
funded by the Priority Academic Program Development of Jiangsu
Higher Education Institutions. We would like to
express our gratitude to Tianyuan Mathematical Center in Southwest China (No.11826102), Sichuan University and Southwest Jiaotong University for their support and hospitality.

\end{document}